\documentclass[12pt,a4paper,reqno]{amsart}
\usepackage[utf8]{inputenc}
\usepackage{amsthm}
\usepackage{amssymb}
\usepackage[abbrev]{amsrefs}
\usepackage[dvipsnames]{xcolor}
\usepackage{bm}
\usepackage[shortlabels]{enumitem}
\usepackage{hyperref}
\usepackage{bbm}
\newtheorem{thm}{}[section]
\newtheorem{theorem}[thm]{Theorem}
\newtheorem{corollary}[thm]{Corollary}
\newtheorem{lemma}[thm]{Lemma}
\newtheorem{proposition}[thm]{Proposition}

\theoremstyle{definition}

\theoremstyle{remark}
\newtheorem{remark}[thm]{Remark}

\numberwithin{equation}{section}
\allowdisplaybreaks
\newcommand{\Lip}{\ensuremath{\mathrm{Lip}}}
\newcommand{\RTO}{\ensuremath{\mathcal{R}}}
\newcommand{\TO}{\ensuremath{\mathcal{T}}}
\newcommand{\tqg}{\ensuremath{\bm{K_{tq}}}}
\newcommand{\fqg}{\ensuremath{\bm{K_{fq}}}}
\newcommand{\qglc}{\ensuremath{\bm{K_{ql}}}}
\newcommand{\ucc}{\ensuremath{\bm{K_{uc}}}}
\newcommand{\lucc}{\ensuremath{\bm{K_{lu}}}}
\newcommand{\lpu}{\ensuremath{\bm{K_{lp}}}}
\newcommand{\sch}{\ensuremath{\bm{K_{b}}}}
\newcommand{\FF}{\ensuremath{\mathbb{F}}}
\newcommand{\RR}{\ensuremath{\mathbb{R}}}
\newcommand{\CC}{\ensuremath{\mathbb{C}}}
\newcommand{\C}{\ensuremath{\mathbf{C}}}
\newcommand{\udf}{\ensuremath{\bm{\varphi_{u}}}}

\newcommand{\NN}{\ensuremath{\mathbb{N}}}
\newcommand{\Nt}{\ensuremath{\mathcal{N}}}
\newcommand{\xx}{\ensuremath{\bm{x}}}

\newcommand{\yy}{\ensuremath{\bm{y}}}

\newcommand{\ww}{\ensuremath{\bm{w}}}
\newcommand{\zz}{\ensuremath{\bm{z}}}
\newcommand{\XX}{\ensuremath{\mathbb{X}}}
\newcommand{\XB}{\ensuremath{\mathcal{X}}}
\newcommand{\YB}{\ensuremath{\mathcal{Y}}}

\newcommand{\ZB}{\ensuremath{\mathcal{Z}}}

\newcommand{\YY}{\ensuremath{\mathbb{Y}}}

\newcommand{\UU}{\ensuremath{\mathbb{U}}}
\newcommand{\Ind}{\ensuremath{\mathbbm{1}}}

\newcommand{\EE}{\ensuremath{\mathbb{E}}}
\newcommand{\Cu}{\ensuremath{\mathcal{Q}}}

\DeclareMathOperator{\osc}{osc}
\DeclareMathOperator{\sgn}{sign}

\DeclareMathOperator{\supp}{supp}
\newcommand{\enbrace}[1]{\left\lbrace#1\right\rbrace}
\newcommand{\enpar}[1]{\left(#1\right)}
\newcommand{\abs}[1]{\left\lvert#1\right\rvert}
\newcommand{\norm}[1]{\left\lVert#1\right\rVert}

\newcommand{\ceil}[1]{\left\lceil #1 \right\rceil}
\AtBeginDocument{\def\MR#1{}}
\newcommand{\Miguel}[1]{\textcolor[RGB]{252,10,12}{ #1}}

\hyphenation{Schau-der}
\begin{document}
\title[Linear vs. nonlinear partial unconditionality]{Linear versus nonlinear forms\\ of partial unconditionality of bases}
\subjclass[2010]{41A65, 41A46, 41A17, 46B15, 46B45}
\keywords{thresholding greedy algorithm, nearly unconditional bases, partial unconditionality, bounded-oscillation unconditionality}
\thanks{F. Albiac acknowledges the support of the Spanish Ministry for Science and Innovation under Grant PID2019-107701GB-I00 for \emph{Operators, lattices, and structure of Banach spaces}. M. Berasategui was supported by ANPCyT PICT-2018-04104.}
\author[Albiac]{Fernando Albiac}
\address{Department of Mathematics, Statistics, and Computer Sciencies--InaMat2\\
Universidad P\'ublica de Navarra\\
Campus de Arrosad\'{i}a\\
Pamplona\\
31006 Spain}
\email{fernando.albiac@unavarra.es}

\author[Ansorena]{Jos\'e L. Ansorena}
\address{Department of Mathematics and Computer Sciences\\
Universidad de La Rioja\\
Logro\~no\\
26004 Spain}
\email{joseluis.ansorena@unirioja.es}

\author[Berasategui]{Miguel Berasategui}
\address{Miguel Berasategui\\
IMAS - UBA - CONICET - Pab I, Facultad de Ciencias Exactas y Naturales\\
Universidad de Buenos Aires\\
(1428), Buenos Aires, Argentina}
\email{mberasategui@dm.uba.ar}

\begin{abstract}
The main results in this paper contribute to bringing to the fore novel underlying connections between the contemporary concepts and methods springing from greedy approximation theory with the well-established techniques of classical Banach spaces. We do that by showing that bounded-oscillation unconditional bases, introduced by Dilworth et al. in 2009 in the setting of their search for extraction principles of subsequences verifying partial forms of unconditionality, are the same as truncation quasi-greedy bases, a new breed of bases that appear naturally in the study of the performance of the thresholding greedy algorithm in Banach spaces. We use this identification to provide examples of bases that exhibit that bounded unconditionality is a stronger condition than Elton's near unconditionality. We also take advantage of our arguments to provide examples that allow us to tell apart certain types of bases that verify either debilitated unconditionality conditions or weaker forms of quasi-greediness in the context of abstract approximation theory.
\end{abstract}
\maketitle
\section{Introduction}\noindent
Knowing the structure of bases and basic sequences in a given Banach space is of outmost importance in the understanding of its geometry. Within the study of basis related properties, the possibility to extract certain subsequences with additional features (such as being unconditional) is a technique of major usage throughout classical Banach space theory. One of the most celebrated results in this direction was Rosenthal's $\ell_1$-theorem, which states that every bounded sequence in a Banach space $\XX$ with no copies of $\ell_1$ has a weakly Cauchy subsequence (see \cite{Rosenthal1974}). Rosenthal's dichotomy inspired further work on the subject that led very naturally to conjecture whether any weakly null sequence in a Banach spaces possesses an unconditional subsequence. In view of Maurey and Rosenthal discovery of a counterexample \cite{MaureyRosenthal1977}, the general extraction problem turned into one where the subsequence was required to fulfil weakened forms of unconditionality. In this milieu, Elton \cite{Elton1978} proved that every normalized weakly null sequence of a Banach space has a subsequence that is ``close'' to being unconditional in a sense that we will specify below, and which led to the introduction of the concept of nearly unconditional bases. Forty years later, Dilworth et al.\ \cite{DOSZ2009} improved Elton's result by showing that the subsequence we extract can be chosen to satisfy a more demanding unconditionality property called bounded-oscillation unconditionality. To state and contextualize these results, we shall introduce some preliminary terminology.

Let $\XX$ be a Banach (or more generally a quasi-Banach) space over the real or complex field $\FF$. In this paper we will use the term \emph{basis} to refer to a norm-bounded sequence $\XB=(\xx_n)_{n=1}^\infty$ in $\XX$ whose linear span is dense in $\XX$ and for which there is a (unique) norm-bounded sequence of linear functionals $\XB^*=(\xx_n^*)_{n=1}^\infty$ in $\XX^*$ biorthogonal to $\XB$, that is $\xx_n^*(\xx_k)=\delta_{n,k}$ for all positive integers $n$ and $k$. The symbol $A\subset_f B$ will mean that $A$ is a finite subset of $B$. Given $A\subset_f \NN$, the \emph{projection on $A$ relative to the basis $\XB$} is the bounded linear map $S_A\colon\XX\to\XX$ given by
\[
S_{A}(f)=\sum_{n\in A}\xx_n^*(f)\xx_n, \quad f\in\XX.
\]
Set $\EE=\{\lambda\in\FF \colon \abs{\lambda}=1\}$. The \emph{sign} of $f\in\XX$ with respect to $\XB$ is the sequence $\varepsilon(f)\in\EE^\NN$ given by
\[
\varepsilon(f)=(\sgn(\xx_n^*(f)))_{n=1}^\infty,
\]
where $\sgn(\lambda)=\lambda/\abs{\lambda}$ if $\lambda\in\FF\setminus\{0\}$ and $\sgn(0)=1$. The \emph{support} of $f\in\XX$ is the set
\[
\supp(f)=\{n\in\NN \colon \xx_n^*(f)\not=0\},
\]
and we define the $\ell_\infty$-norm of $f\in\XX$ with respect to $\XB$ as
\[
\norm{f}_\infty=\sup_{n\in\NN} \abs{\xx_n^*(f)}.
\]

Note that, since we are assuming that $\XB^*$ is bounded, $\lim_n \xx_n^*(f)=0$ for all $f\in\XX$. Therefore, $\norm{f}_\infty<\infty$ and
\[
\abs{\{n\in\NN \colon \abs{\xx_n^*(f)}\ge d\}}<\infty
\]
for every $f\in\XX$ and $d>0$.

The \emph{oscillation} of $f\in\XX$ on a nonempty set $A\subset_f\NN$ is the number
\[
\osc(f,A)=\sup_{n\in A} \frac{\max_{n\in A} \abs{\xx_n^*(f)} }{\min_{n\in A} \abs{\xx_n^*(f)}},
\]
with the convention that $0/0=1$ and $a/0=\infty$ if $a>0$.

Now, given numbers $0<a\le b <\infty$ and $f\in \XX$ we put
\[
A(f,a,b):=\{n\in\NN \colon a\le \abs{\xx_n^*(f)}\le b\},
\]
so that $\osc(f,A)\le b/a$ whenever $A\subset A(f,a,b)$. The basis $\XB$ is said to be \emph{nearly unconditional} if for each $t\in(0,1]$ there is a constant $C\in(0,\infty)$ such that $\norm{S_A(f)}\le C\norm{f}$
for all $f\in\XX$, all $s\in(0,\infty)$ with $\norm{f}_\infty\le s$, and all $A\subset A(f,ts,s)$. Given $0<t\le 1$, we define $\phi(t)$ as the smallest value of $C$.

In turn, the basis is said to be \emph{bounded-oscillation unconditional} if for every $1\le d\le D<\infty$ there is a constant $C$ such that
$\norm{S_A(f)} \le C \norm{f}$ whenever $A$ is a nonempty subset of $\NN$ such that
\begin{itemize}[leftmargin=*]
\item $\osc(f,A)\le D$, and
\item there exists $n\in\NN$ and a partition $(A_j)_{j=1}^n$ of $A$ into nonempty subsets with
\begin{itemize}[label=$ \rhd$]
\item $\osc(f,A_j)\le d$ for all $1\le j\le n$,
\item $n\le \min(A_1)$, and
\item $\max(A_j)<\min(A_{j+1})$ for $1\le j\le n-1$.
\end{itemize}
\end{itemize}
If these conditions hold for a pair $(D,d)$ we say that the basis is $(D,d)$-bounded-oscillation unconditional, and we denote by $\beta(D,d)$ the smallest value of the constant $C$.

Since a basis $\XB$ is unconditional if and only if $\sup_{\abs{A}<\infty} \norm{S_A}<\infty$, unconditional bases are in particular bounded-oscillation unconditional and nearly unconditional. Observe that given $1\le d<\infty$, a basis is $(d,d)$-bounded-oscillation unconditional if and only if there is a constant $C$ such that $\norm{S_A(f)}\le C\norm{f}$ for all $f\in\XX$, all $s\in(0,\infty)$, and all $A\subset A(f,s/d,s)$. Hence, bounded-oscillation unconditional bases are nearly unconditional. Quantitatively,
\[
\phi(t) \le \Phi(t):=\beta(1/t,1/t), \quad 0<t\le 1.
\]
Clearly, both functions $\Phi$ and $\phi$ are non-increasing.

In this language we can now state the extraction principles we advertised before.\begin{theorem}[\cites{Elton1978,Odell1980}, cf. \cite{DKK2003}*{Theorem 5.1 and Remark 5.2}]\label{thm:EPNU}
Every normalized weakly null sequence in a real Banach space has a nearly unconditional subsequence $\XB$. Moreover, there is a universal constant $C$ such that
the basic sequence $\XB$ satisfies
\[
\phi(t) \le C (1-\log t), \quad 0<t\le 1.
\]
\end{theorem}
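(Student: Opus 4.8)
The plan is to produce the subsequence by two successive refinements of the given sequence and then to estimate $\phi$ by an argument whose only genuinely infinite-dimensional ingredient is weak nullity, the factor $1-\log t$ arising from the number of dyadic scales that the coefficient sequence of a vector can occupy.

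\emph{Reductions.} By the Bessaga--Pe\l{}czy\'nski selection principle we first pass to a basic subsequence, and after a harmless perturbation and a further passage to a subsequence we may assume the basic sequence is bimonotone, so that the coordinate projection onto any interval of $\NN$ has norm one. Next, in the definition of $\phi(t)$ one may always take $s=\norm{f}_\infty$: if $\norm{f}_\infty\le s$ and $A\subset A(f,ts,s)$, then $\abs{\xx_n^*(f)}\le\norm{f}_\infty$ for all $n$ and $\abs{\xx_n^*(f)}\ge ts\ge t\norm{f}_\infty$ for $n\in A$, hence $A\subset A(f,t\norm{f}_\infty,\norm{f}_\infty)$. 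After normalising so that $\norm{f}_\infty=1$, the task is to bound $\norm{S_A(f)}$ by $C(1-\log t)\norm{f}$, with $C$ universal, for every finitely supported $f$ with $\norm{f}_\infty=1$ and every $A\subset\{n\in\NN:\abs{\xx_n^*(f)}\ge t\}$. Partition the coordinates of $f$ into the dyadic windows $W_j=\{n:2^{-j-1}<\abs{\xx_n^*(f)}\le 2^{-j}\}$; then $A$ meets only $W_0,\dots,W_{J-1}$ with $J=\ceil{\log_2(1/t)}$, and it is the number $J$ of relevant windows that will produce the logarithm.

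\emph{The extraction (the main obstacle).} The heart of the matter is to choose the subsequence $\XB=(\xx_n)$ so that a norming functional can be cut down to a single window of coordinates at a cost bounded by an absolute constant. This is exactly where weak nullity is indispensable and cannot be circumvented: by the Maurey--Rosenthal construction there is a normalised weakly null sequence no subsequence of which is unconditional, so for a general basic sequence the coordinate projections $S_B$ have unbounded norms and no linear-algebraic device will do. Following Elton, sharpened by Odell, one uses the weak-star compactness of the ball $B_{\XX^*}$ together with a Ramsey-type diagonalisation to pass to a subsequence with the following stability property: for any $\phi\in B_{\XX^*}$ and any admissible ``window-type'' block $B$ of coordinates of a finitely supported vector (a single dyadic window, suitably separated from the larger coordinates), there is $\psi\in MB_{\XX^*}$ with $\psi(\xx_n)=\phi(\xx_n)$ for $n\in B$ and $\psi(\xx_n)=0$ for the remaining coordinates of the vector, where $M$ is an \emph{absolute} constant. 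The role of weak nullity is to make this correction cheap: along a suitable subsequence the vectors $\xx_n$ are sufficiently ``spread'' that a norming functional can be re-engineered on a prescribed block without inflating its norm by more than a fixed factor. Isolating the correct notion of window-type block, organising the diagonalisation, controlling the accumulated errors, and --- crucially for the logarithmic rate --- proving that the per-window cost is an absolute constant (this is the point at which Odell improves Elton's cruder estimate) constitute the substantive content of the proof.

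\emph{Assembling the estimate.} Granting this, fix $t$, a finitely supported $f$ with $\norm{f}_\infty=1$, and $A$ as above, and choose $\phi\in B_{\XX^*}$ with $\phi(S_A(f))=\norm{S_A(f)}$. For each relevant window index $j\in\{0,\dots,J-1\}$ apply the stability property to $\phi$ and the block $A\cap W_j$ (which sits in the single dyadic window $W_j$ of $f$) to obtain $\psi_j\in MB_{\XX^*}$ that agrees with $\phi$ on $A\cap W_j$ and vanishes on $\supp(f)\setminus(A\cap W_j)$, and set $\psi=\sum_{j=0}^{J-1}\psi_j$. Then $\norm{\psi}\le MJ\le M(1+\log_2(1/t))$, while $\psi$ agrees with $\phi$ on $A$ and vanishes on $\supp(f)\setminus A$, so that $\psi(f)=\phi(S_A(f))=\norm{S_A(f)}$. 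Hence
\[
\norm{S_A(f)}=\psi(f)\le\norm{\psi}\,\norm{f}\le M\bigl(1+\log_2(1/t)\bigr)\norm{f},
\]
which gives $\phi(t)\le C(1-\log t)$ with $C=M/\log 2$. Since the only non-quantitative input was the extraction lemma and it produced an absolute constant $M$, the final constant $C$ is universal, as required.
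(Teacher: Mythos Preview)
The paper does not prove Theorem~\ref{thm:EPNU}: it is quoted from the literature (Elton, Odell, and the quantitative form in Dilworth--Kalton--Kutzarova) as background for the paper's own results. So there is no ``paper's proof'' to compare your attempt against. What the paper \emph{does} contribute in this direction is Corollary~\ref{corollaryPhilog2}, which yields a stronger conclusion (the subsequence is bounded-oscillation unconditional with $\Phi(t)\le C(1-\log t)$) by a completely different route: it takes the extraction of a $(D,d)$-bounded-oscillation unconditional subsequence for granted (Theorem~\ref{thm:EPBOU}, again cited from \cite{DOSZ2009}) and then applies the paper's own quantitative machinery---Lemma~\ref{lem:SomeVsAll} and Corollary~\ref{corollaryPhilog}---to upgrade a single value $\Phi(s)$ to the full logarithmic bound. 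Thus the paper never carries out an Elton--Odell style argument; it outsources the infinite-dimensional extraction entirely and concentrates on the finite, basis-theoretic estimates.

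As to your outline itself: the reductions and the dyadic-window count in the final paragraph are fine, but the middle paragraph is a description of what the Elton--Odell lemma should do rather than a proof of it, and you say so explicitly (``constitute the substantive content of the proof''). That is the genuine gap. In particular, the ``stability property'' you invoke---producing, for each window $A\cap W_j$, a functional $\psi_j\in M B_{\XX^*}$ that agrees with $\phi$ on $A\cap W_j$ and vanishes on \emph{all} of $\supp(f)\setminus(A\cap W_j)$---is stronger than what Elton's original argument delivers; the classical formulation controls the functional on the block and on the complementary \emph{interval} structure, not on an arbitrary subset of the support, and getting the absolute per-window constant (Odell's improvement) requires a careful Ramsey/diagonalisation argument that your sketch does not supply. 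Minor point: you cannot in general pass to a literally bimonotone subsequence of a weakly null sequence without renorming; one gets basis constant $1+\varepsilon$, which is enough but should be said. In short, your write-up is a plausible roadmap to the Elton--Odell proof, not a proof, and it is not comparable to anything the paper does, since the paper simply cites this theorem.
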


\begin{theorem}[ \cite{DOSZ2009}*{Theorem 2.1}]\label{thm:EPBOU}
Given $1\le d\le D<\infty$ and $C>8$, every normalized weakly null sequence in a real Banach space has a $(D,d)$-bounded-oscillation unconditional subsequence with $\beta(D,d)\le Cd$.
\end{theorem}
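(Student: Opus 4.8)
The plan is to build the subsequence by a diagonalization driven by weak nullity, so that far-out vectors are almost invisible to every initial segment, and then to establish the $(D,d)$-estimate by a hands-on computation; the role of the structural hypothesis $n\le\min(A_1)$ is to keep the errors generated along the way from accumulating, and the constant $8$ will come out as a product of the elementary constants involved (interval projections, signs, oscillation), all of which the choice of subsequence pushes to their optimal values.

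\emph{Preliminary reductions.} By the Bessaga--Pe\l czy\'nski selection principle, a small perturbation, and a passage to a subsequence, one may assume from the outset that $\XB=(\xx_n)_{n=1}^\infty$ is itself a normalized, weakly null, Schauder basic sequence whose basis constant is as close to $1$ as desired; fix a parameter $\eta>0$ (to be sent to $0$ at the end) so that all interval projections $S_{[a,b]}$ have norm at most $2+\eta$. One records that for any subsequence $(\xx_{n_k})_k$ and any finitely supported $f=\sum_k a_k\xx_{n_k}$ one has $\xx_{n_k}^*(f)=a_k$, so the coordinate functionals of a subsequence are just the restrictions of those of $\XB$ and remain uniformly bounded, say by $L<\infty$.

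\emph{The extraction.} Fix positive scalars $\tau_k\downarrow0$ decaying so fast that $k\,\tau_k\le\eta$ for every $k$. Since $\xx^*(\xx_n)\to0$ for all $\xx^*\in\XX^*$, a standard inductive diagonal argument produces a subsequence, still written $(\xx_{n_k})_k$, enjoying a quantitative \emph{non-interference} property of the type
\[
\norm{g+h}\ \ge\ (1-\tau_k)\norm{g}-L\,\tau_k\norm{h}
\]
whenever $g\in\spn\{\xx_{n_1},\dots,\xx_{n_{k-1}}\}$ and $h\in\spn\{\xx_{n_k},\xx_{n_{k+1}},\dots\}$ is finitely supported, together with a companion bound in which the roles of the initial and the final pieces are interchanged; in the same construction one may also arrange that $(\xx_{n_k})_k$ is $(1+\eta)$-monotone on both sides. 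The proof is routine: at stage $k$ one takes a finite $\tau_k$-net of the unit ball of the dual of the finite dimensional space $\spn\{\xx_{n_1},\dots,\xx_{n_{k-1}}\}$, extends its members to $\XX^*$, and picks $n_k$ large enough that every functional occurring in any net built so far is $<\tau_k$ in modulus at $\xx_{n_k}$; testing $g+h$ against the extension of a near-norming functional for $g$, and invoking the uniform bound $L$ on the coordinate functionals to handle $h$, gives the displayed inequality.

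\emph{Verification of the $(D,d)$-estimate and the main obstacle.} Fix a finitely supported $f$ (the general case follows by density) and a set $A=A_1\cup\dots\cup A_n$ as in the statement, with $p:=\min(A_1)\ge n$, $\osc(f,A_j)\le d$ for every $j$, and $\osc(f,A)\le D$. One first discards the coordinates of $f$ outside the interval $[\min(A_1),\max(A_n)]$, at the cost of a bounded factor, and then one must bound $\norm{S_Af}$ by a universal multiple of $d$ times the norm of this restriction. The oscillation bound $\osc(f,A_j)\le d$ is used — block by block, via a layered (telescoping) decomposition of each block over the scales between its smallest and largest coefficient modulus — to pass from the true coefficients of $f$ on $A$ to "flattened" ones, and this is where the linear factor $d$ (and nothing worse) enters. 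The bound $\osc(f,A)\le D$ serves only to order the blocks by height, and the point to stress is that $D$ never reaches the final constant: the blocks $S_{A_1}f,\dots,S_{A_n}f$, being spread out along our specially chosen subsequence, can be reassembled using the non-interference estimates at a cumulative cost of $1+O(n\,\tau_p)=1+O(p\,\tau_p)=1+O(\eta)$ — here the hypothesis $n\le p$ is exactly what turns the $\le n$ error-generating steps, each of size $O(\tau_p)$, into a negligible total — rather than at the multiplicative-in-$n$ cost that the triangle inequality would produce. Assembling the elementary constants then gives $\norm{S_Af}\le(2+\eta)^3\,d\,\norm{f}$, so that $(2+\eta)^3\downarrow8$ as $\eta\downarrow0$ yields any prescribed $C>8$. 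The genuine obstacle is precisely this middle step: the coordinates of $f$ lying in $[\min(A_1),\max(A_n)]$ but outside $A$ (those in the gaps between blocks and those skipped inside each interval $[\min(A_j),\max(A_j)]$) cannot be removed by any single bounded projection, and it is only the combination of the non-interference property with the constraint $n\le\min(A_1)$ that keeps them — and the reassembly of the blocks — under control; doing this bookkeeping while tracking the elementary constants exactly is what produces the sharp bound $8$.
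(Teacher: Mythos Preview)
The paper does not prove this theorem; it is quoted from \cite{DOSZ2009}*{Theorem~2.1} and invoked only as a black box in the proof of Corollary~\ref{corollaryPhilog2}. There is therefore no proof in the present paper to compare your attempt against.

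As to your sketch on its own: the overall strategy---pass to a subsequence with near-optimal monotonicity and a quantitative ``non-interference'' between initial and tail segments, then exploit the block decomposition together with the hypothesis $n\le\min(A_1)$ to keep the errors from accumulating---is indeed the shape of the argument in \cite{DOSZ2009}. But what you have written is an outline, not a proof. The decisive step, in which the per-block bound $\osc(f,A_j)\le d$ is converted into the linear factor $d$ in the final estimate while the global oscillation $D$ drops out entirely, is asserted (``layered (telescoping) decomposition'', ``reassembled using the non-interference estimates'') but never carried out. Concretely: you do not explain how to control $\norm{S_{A_j}(f)}$ when $A_j$ is a scattered subset of $[\min A_j,\max A_j]$ and the coefficients of $f$ inside that interval but outside $A_j$ are completely unconstrained; you do not show how the $n$ blocks are reassembled at cost $1+O(\eta)$ rather than the $O(n)$ the triangle inequality would give; and you do not derive the specific constant $(2+\eta)^3$ you claim. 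You correctly flag these as ``the genuine obstacle'', but identifying an obstacle is not the same as overcoming it.
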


Dilworth et al. \cite{DKK2003} realized the connection between near unconditionality and the performance of the thresholding greedy algorithm. Delving deeper into the matter, near unconditionality has been recently characterized as a threshold-free greedy-like property.

\begin{theorem}[\cite{AAB2023}]\label{thm:AAB2023}
A basis is nearly unconditional if and only if it is quasi-greedy for largest coefficients.
\end{theorem}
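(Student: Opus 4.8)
\noindent The plan is to recall that $\XB$ is \emph{quasi-greedy for largest coefficients} precisely when there is a constant $\qglc$ such that $\norm{S_A(f)}\le\qglc\norm{f}$ for every $f\in\XX$ and every finite set $A\subseteq\{n\in\NN\colon\abs{\xx_n^*(f)}=\norm{f}_\infty\}$, and then to establish the two implications separately. The implication from near unconditionality is immediate: feeding $t=1$ and $s=\norm{f}_\infty$ into the definition of a nearly unconditional basis gives $A(f,s,s)=\{n\colon\abs{\xx_n^*(f)}=\norm{f}_\infty\}$, whence $\qglc\le\phi(1)<\infty$.

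For the converse, assume $\qglc<\infty$; we must show $\phi(t)<\infty$ for every $t\in(0,1]$. \emph{Stage 1} proves this for $t$ close to $1$ by a self-improving perturbation argument. Fix $f$ with $\norm{f}_\infty=1$ (by homogeneity) and an arbitrary subset $A'$ of the finite set $A:=\{n\colon\abs{\xx_n^*(f)}\ge t\}$. Raising the coefficients of $f$ on $A'$ to modulus $1$ produces a finite modification $g=f+p\in\XX$, where $p=\sum_{n\in A'}(1-\abs{\xx_n^*(f)})\sgn(\xx_n^*(f))\xx_n$; since $\norm{g}_\infty=1$ and $A'\subseteq\{n\colon\abs{\xx_n^*(g)}=\norm{g}_\infty\}$, quasi-greediness for largest coefficients yields $\norm{S_{A'}(g)}\le\qglc\norm{g}$, while $S_{A'}(f)=S_{A'}(g)-p$. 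The key point is that $p$ arises from $S_{A'}(f)$ by applying a nonnegative diagonal multiplier with entries in $[0,1/t-1]$; expanding those scalar multipliers in binary---which keeps all ensuing constants independent of the cardinality of $A'$, a point that matters in the quasi-Banach setting---exhibits $p$ as a geometrically convergent series of vectors $S_B(f)$ with $B\subseteq A'$. Writing $\Sigma:=\max_{B\subseteq A}\norm{S_B(f)}$ (finite, as $A$ is finite), the previous estimates combine into an inequality of the form $\Sigma\le\qglc\norm{f}+c\,\qglc\,(1/t-1)\,\Sigma$ for an absolute constant $c$; once $t$ is close enough to $1$ that $c\,\qglc\,(1/t-1)<1$, this absorbs to give $\phi(t)\le C(t)<\infty$. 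I expect this absorption to be the main obstacle: making $\norm{p}$ small relative to $\Sigma$ forces $t$ near $1$, so finiteness of $\phi$ is obtained only on some interval $(t_0,1]$, and a softer argument is needed to reach every threshold.

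\emph{Stage 2} propagates finiteness of $\phi$ downward by a level-splitting argument. Given $r,s\in(0,1]$ and $A\subseteq\{n\colon\abs{\xx_n^*(f)}\ge rs\norm{f}_\infty\}$, split $A$ at level $s\norm{f}_\infty$. The part where $\abs{\xx_n^*(f)}\ge s\norm{f}_\infty$ is bounded by $\phi(s)\norm{f}$ directly. For the complementary part $A_2$, pass to $h:=f-S_E(f)$ with $E:=\{n\colon\abs{\xx_n^*(f)}\ge s\norm{f}_\infty\}$: then $\norm{h}_\infty\le s\norm{f}_\infty$, while $\abs{\xx_n^*(h)}=\abs{\xx_n^*(f)}\ge rs\norm{f}_\infty\ge r\norm{h}_\infty$ for $n\in A_2$, so $\norm{S_{A_2}(f)}=\norm{S_{A_2}(h)}\le\phi(r)\norm{h}\le\phi(r)(1+\phi(s))\norm{f}$, using $\norm{S_E(f)}\le\phi(s)\norm{f}$. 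Adding the two parts gives $1+\phi(rs)\le(1+\phi(r))(1+\phi(s))$. Iterating from a fixed $t_1\in(t_0,1)$ yields $1+\phi(t_1^{N})\le(1+\phi(t_1))^{N}<\infty$ for every $N\in\NN$; since $\phi$ is non-increasing and $t_1^{N}\to0$, we conclude $\phi(t)<\infty$ for all $t\in(0,1]$, i.e.\ $\XB$ is nearly unconditional. In the quasi-Banach setting every use of the triangle inequality above is replaced by its quasi-norm version, which only affects the constants; note also that the bound for $\phi$ produced this way is far coarser than the logarithmic one available in Theorem~\ref{thm:EPNU}, but mere finiteness is all that is needed here.
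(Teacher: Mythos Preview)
The paper does not contain its own proof of Theorem~\ref{thm:AAB2023}; it is quoted as a result from \cite{AAB2023}. So there is nothing to compare your argument against directly. That said, your proof is correct: the forward implication is immediate from the definitions with $t=1$, and for the converse your two-stage strategy (an absorption argument near $t=1$ followed by a submultiplicativity bootstrap to reach all $t$) goes through as written, with the quasi-Banach adjustments you mention.

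It is worth pointing out that your method is essentially the same machinery the present paper deploys for the neighbouring Theorem~\ref{thm:main}. Your Stage~1 perturbation-and-absorb step mirrors the passage from $(1,1)$-bounded-oscillation unconditionality to $(1/t,1/t)$-bounded-oscillation unconditionality in the proof of Theorem~\ref{thm:main} (raise the coefficients on the chosen set to a constant modulus, apply the hypothesis, and use Lemma~\ref{lem:pconvex} to control the correction term by $(1/t-1)$ times a maximal projection norm, then absorb). Your Stage~2 level-splitting is the $\phi$-analogue of Lemma~\ref{lem:Philogconvex} and Lemma~\ref{lem:SomeVsAll}: one shows a subadditivity/submultiplicativity inequality in the logarithmic variable and iterates. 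The only structural difference is that QGLC is genuinely weaker than $(1,1)$-bounded-oscillation unconditionality (the tail $f$ need not satisfy $\norm{f}_\infty\le 1$ in the latter), so your Stage~1 has to be a bit more careful about where the ``largest coefficients'' sit; you handle this correctly by raising the coefficients on $A'$ all the way to modulus~$1=\norm{f}_\infty$ before invoking QGLC. One cosmetic remark: in the $p$-Banach setting your Stage~2 identity becomes $1+\phi(rs)^p\le(1+\phi(r)^p)(1+\phi(s)^p)$ rather than the $p=1$ version you wrote, but as you note this affects only constants.
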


Let us introduce the necessary terminology to properly understand this result. A subset $A\subset_f\NN$ is said to be a \emph{greedy set} of $f\in\XX$ with respect to a basis $\XB$ if $\abs{\xx_n^*(f)} \ge \abs{\xx_k^*(f)}$ for all $n\in A$ and all $k\in\NN\setminus A$. The basis $\XB$ is said to be \emph{quasi-greedy} if there is a constant $C\ge 1$ such that $\norm{S_A(f)} \le C \norm{f}$ for all $f\in\XX$ and all greedy sets $A$ of $f$. If this inequality holds under the extra assumption that $\osc(f,A)=1$, we say that $\XB$ is \emph{quasi-greedy for largest coefficients}, and we denote by $\qglc$ the optimal constant $C$. With the notation
\[
\Ind_{\varepsilon,A}=\Ind_{\varepsilon,A}[\XB,\XX]=\sum_{n\in A} \varepsilon_n \, \xx_n,
\]
we have that $\XB$ is quasi-greedy for largest coefficients with constant $C$ if and only if
\[
\norm{ \Ind_{\varepsilon,A}} \le C\norm{ \Ind_{\varepsilon,A}+ f}
\]
for all $A\subset_f\NN$, all $\varepsilon\in\EE^A$, and all $f\in\XX$ with $\norm{f}_\infty\le 1$ and $A\cap \supp(f)=\emptyset$. Bases $\XB$ that satisfy the more restrictive condition
\[
\min_{n\in A} \abs{\xx_n^*(f)} \norm{\Ind_{\varepsilon(f),A}} \le C \norm{f}
\]
for all $f\in\XX$ and all greedy sets $A$ of $f$ are called \emph{truncation quasi-greedy} with constant $C$. Although this term was recently coined in \cite{AABBL2022}, it captures a property of quasi-greedy bases of Banach spaces that was already implicit in the early days of the theory (see \cite{DKKT2003}*{Lemma 2.2} and \cite{Woj2000}*{Proof of Theorem 3}). The fact that quasi-greedy bases are truncation quasi-greedy also in the lack of local convexity is far from trivial and was proved in \cite{AABW2021}.
\begin{theorem}[\cite{AABW2021}*{Theorem 4.13}]\label{thm:QGvsTQG}
Assume that $\XB$ is a quasi-greedy basis of a quasi-Banach space $\XX$. Then $\XB$ is truncation quasi-greedy.
\end{theorem}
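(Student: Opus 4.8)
The plan is to reduce the statement to the uniform boundedness of the coordinatewise truncation operators, then to treat those by a dyadic decomposition, after which a single residual inequality---the truncation estimate for greedy sets of oscillation at most $2$---remains as the crux. By the Aoki--Rolewicz theorem we may assume the quasi-norm of $\XX$ is $r$-subadditive for some $0<r\le1$, so $\norm{g+h}^r\le\norm{g}^r+\norm{h}^r$; by density we may work with finitely supported vectors, and we write $C$ for the quasi-greedy constant of $\XB$. Since quasi-greediness for largest coefficients is a formal weakening of quasi-greediness, $\XB$ is quasi-greedy for largest coefficients with $\qglc\le C$. For $f\in\XX$ and $\alpha>0$ let $\Lambda_\alpha=\{n\colon\abs{\xx_n^*(f)}\ge\alpha\}$ (a greedy set of $f$) and $t_\alpha(f)=\sum_n\min(\abs{\xx_n^*(f)},\alpha)\sgn(\xx_n^*(f))\,\xx_n$. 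If $A$ is a greedy set of $f$ and $\alpha=\min_{n\in A}\abs{\xx_n^*(f)}$, then $A\subseteq\Lambda_\alpha$, $\Lambda_\alpha\setminus A\subseteq\{n\colon\abs{\xx_n^*(f)}=\alpha\}$, and the vectors $t_\alpha(f)=\alpha\Ind_{\varepsilon(f),\Lambda_\alpha}+(f-S_{\Lambda_\alpha}(f))$ and $f-S_A(f)$ have $\Lambda_\alpha$, respectively $\Lambda_\alpha\setminus A$, as a greedy set of oscillation $1$. Applying quasi-greediness for largest coefficients to these two vectors, and using $\norm{S_A(f)}\le C\norm{f}$, yields $\alpha\norm{\Ind_{\varepsilon(f),\Lambda_\alpha}}\le\qglc\norm{t_\alpha(f)}$ and $\alpha\norm{\Ind_{\varepsilon(f),\Lambda_\alpha\setminus A}}\le\qglc(1+C^r)^{1/r}\norm{f}$; since $\Ind_{\varepsilon(f),A}=\Ind_{\varepsilon(f),\Lambda_\alpha}-\Ind_{\varepsilon(f),\Lambda_\alpha\setminus A}$, $r$-subadditivity reduces the theorem to the bound $\norm{t_\alpha(f)}\le C'\norm{f}$ for a fixed $C'$ and all $f,\alpha$, and by homogeneity ($t_{\lambda\alpha}(\lambda f)=\lambda t_\alpha(f)$) we may take $\alpha=1$.

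For this truncation bound, decompose $\Lambda:=\{n\colon\abs{\xx_n^*(f)}>1\}$ dyadically: set $B_k=\{n\colon 2^k<\abs{\xx_n^*(f)}\le 2^{k+1}\}$ and $G_k=\{n\colon\abs{\xx_n^*(f)}>2^k\}=\bigsqcup_{j\ge k}B_j$ for $k\ge0$ (only finitely many $B_k$ are nonempty). Then $t_1(f)=(f-S_\Lambda(f))+\Ind_{\varepsilon(f),\Lambda}$ with $\norm{f-S_\Lambda(f)}\le(1+C^r)^{1/r}\norm{f}$ ($\Lambda$ being a greedy set of $f$), and $\Ind_{\varepsilon(f),\Lambda}=\sum_k\Ind_{\varepsilon(f),B_k}$, whence $\norm{\Ind_{\varepsilon(f),\Lambda}}^r\le\sum_k 2^{-kr}\norm{2^k\Ind_{\varepsilon(f),B_k}}^r$ by $r$-subadditivity. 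Each $B_k$ is a greedy set of $g_k:=f-S_{G_{k+1}}(f)$ with $\osc(g_k,B_k)\le2$, $\min_{n\in B_k}\abs{\xx_n^*(g_k)}>2^k$, and $\norm{g_k}\le(1+C^r)^{1/r}\norm{f}$ (as $G_{k+1}$ is a greedy set of $f$). Thus, summing a geometric series, everything comes down to proving that there is a constant $C''$ with
\[
\min_{n\in B}\abs{\xx_n^*(g)}\,\norm{\Ind_{\varepsilon(g),B}}\le C''\norm{g}
\]
for every $g\in\XX$ and every greedy set $B$ of $g$ with $\osc(g,B)\le 2$.

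This restricted inequality is the crux, and the place where local convexity is genuinely used in the classical theory. In a Banach space the full truncation-quasi-greedy estimate follows from an Abel-summation device: writing $\min_{n\in B}\abs{\xx_n^*(g)}\,\Ind_{\varepsilon(g),B}$ as a combination of the greedy projections of $g$ at the successive coefficient levels, one obtains coefficients of bounded total variation and terms bounded by $C\norm{g}$, and the triangle inequality then closes the argument (see \cite{DKKT2003}*{Lemma 2.2} and \cite{Woj2000}). Without local convexity this collapses, since that combination has unboundedly many summands and $r$-subadditivity is then worthless; nor do the obvious fixes help---splitting $B$ into two blocks of oscillation $\sqrt 2$ and iterating toward oscillation $1$ loses a fixed factor $>1$ at each of infinitely many steps. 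One must instead represent the truncated vector through \emph{boundedly many} greedy projections carrying \emph{bounded} weights, exploiting the special shape of the vectors $g_k$ (they vanish on $G_{k+1}$ and are dominated by $2^k$ off $G_k$) and, presumably, an extremal argument built on proximity to oscillation $1$. This is exactly what is carried out in \cite{AABW2021}*{Theorem~4.13}; the reduction sketched above shows that everything surrounding it is soft.
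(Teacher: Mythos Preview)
Your reduction is sound: the passage from the full truncation quasi-greedy estimate to the oscillation-$\le 2$ case via dyadic slicing is correct, and your identification of that residual inequality as the genuinely nonconvex crux is accurate. But the proposal is not a proof. At the crux you invoke \cite{AABW2021}*{Theorem~4.13}, which is precisely the theorem under consideration, so the argument is circular (or, more charitably, a reduction followed by a pointer back to the statement).

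The missing idea is the $p$-convexity substitute recorded here as Lemma~\ref{lem:pconvex} (\cite{AABW2021}*{Theorem~2.2}): in a $p$-Banach space,
\[
\norm{\sum_{n\in\Nt} a_n f_n}\le A_p \sup_{E\subset\Nt}\norm{\sum_{n\in E} f_n},\qquad \abs{a_n}\le 1,\quad A_p=(2^p-1)^{-1/p}.
\]
This is exactly what stands in for the Abel-summation device you correctly note collapses when $p<1$. The paper's route (Remark~\ref{remark: QG->TQG} together with the proof of Theorem~\ref{thm:main}) uses it as follows. First, quasi-greediness gives $(1,1)$-bounded-oscillation unconditionality with $\Phi(1)\le 2^{1/p}C$ via the two-greedy-set trick you already used. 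Second---and this is the step your ``obvious fixes'' paragraph misses---for $A$ with $\osc(f,A)\le 1/t$ and $\norm{S_A(f)}_\infty=1$ one picks $B\subset A$ maximizing $\norm{\Ind_{\varepsilon(f),B}}$ over subsets of $A$; Lemma~\ref{lem:pconvex} then yields both $\norm{S_A(f)}\le A_p\norm{\Ind_{\varepsilon(f),B}}$ and, crucially, $\norm{\Ind_{\varepsilon(f),B}-S_B(f)}\le A_p(1-t)\norm{\Ind_{\varepsilon(f),B}}$. Feeding the latter into the $(1,1)$ estimate applied to $\Ind_{\varepsilon(f),B}+(f-S_B(f))$ produces a closed inequality for $\norm{\Ind_{\varepsilon(f),B}}$ as soon as $A_p\Phi(1)(1-t)<1$, hence $\Phi(t)<\infty$ for $t$ close to $1$. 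Lemma~\ref{lem:SomeVsAll} then propagates this to all $t$ in finitely many steps, avoiding the infinite-iteration blow-up you feared, and one more application of Lemma~\ref{lem:pconvex} together with a geometric series (the last paragraph of the proof of Theorem~\ref{thm:main}) converts bounded-oscillation unconditionality into truncation quasi-greediness. Your intuition that one should exploit ``proximity to oscillation $1$'' was on target; Lemma~\ref{lem:pconvex} is the tool that makes that perturbation quantitative.
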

The first example that illustrates that truncation quasi-greedy bases need not be quasi-greedy can be found in
\cite{DKK2003}*{Example 4.8}, where the authors constructed a basis that dominates the unit vector system of weak-$\ell_1$ (and so it is truncation quasi-greedy) but it is not quasi-greedy. In this regard, it must be mentioned that it was recently proved that bidemocratic bases need not be quasi-greedy \cite{AABBL2023}*{Theorem 3.6}. Since bidemocratic bases are truncation quasi-greedy, this result yields in particular the existence of truncation quasi-greedy bases that are not quasi-greedy. Despite this dissimilitude, in most situations the only property of quasi-greedy bases that one needs in practice is that they are truncation quasi-greedy. To illustrate this statement, see, e.g., \cite{AAW2021b}*{Theorem 5.1}, \cite{AABW2021}*{Proposition 10.17}, \cite{AAW2021}*{Theorems 3.7 and 4.3}, \cite{AABBL2022}*{Corollary 4.5} or \cite{AABBL2023}*{Theorem 2.5}.

In this paper we show that bounded-oscillation unconditionality is also a greedy-like property. To be specific, we prove the following result.

\begin{theorem}\label{thm:main}
Given a basis $\XB$ of a quasi-Banach space, the following are equivalent.
\begin{enumerate}[label=(\roman*), leftmargin=*, widest=iii]
\item $\XB$ is bounded-oscillation unconditional.
\item $\XB$ is $(1,1)$-bounded-oscillation unconditional.
\item $\XB$ is truncation quasi-greedy.
\end{enumerate}
\end{theorem}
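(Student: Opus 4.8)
The plan is to prove a cycle of implications. Since $(1,1)$-bounded-oscillation unconditionality is a special case of bounded-oscillation unconditionality, the implication $(i)\Rightarrow(ii)$ is trivial. The implication $(iii)\Rightarrow(i)$ should be the ``soft'' direction: assuming $\XB$ is truncation quasi-greedy, I would take a set $A$ with a partition $(A_j)_{j=1}^n$ as in the definition of bounded-oscillation unconditionality, and estimate $\norm{S_A(f)}$ by summing the contributions of the blocks. The key is that each $A_j$ has bounded oscillation ($\le d$), so after rescaling within each block one can compare $S_{A_j}(f)$ with a multiple of $\Ind_{\varepsilon(f),A_j}$ times $\min_{n\in A_j}\abs{\xx_n^*(f)}$, using truncation quasi-greediness on each $A_j$ and the fact that $A_j$ is a greedy set of $S_{A_j}(f)$. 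The conditions $n\le\min(A_1)$ and $\max(A_j)<\min(A_{j+1})$ together with the global oscillation bound $\osc(f,A)\le D$ are exactly what is needed to control the telescoped sum of $n$ terms: this should amount to a weak-type/summation estimate reminiscent of the classical proof that truncation quasi-greedy bases dominate the unit vector basis of weak-$\ell_1$ on such ``lacunary in position'' blocks. One must be careful about the quasi-norm constant $\kappa$ of $\XX$, so the estimates will be of the form $\beta(D,d)\lesssim_\kappa (\text{const})\cdot\tqg^{\,c}\cdot$ (something polynomial in $D$), but no logarithmic loss should appear.

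The substantive implication is $(ii)\Rightarrow(iii)$. Here I would start from $(1,1)$-bounded-oscillation unconditionality, which (as noted in the excerpt) is equivalent to: there is $C$ with $\norm{S_A(f)}\le C\norm{f}$ for all $f$, all $s>0$, and all $A\subset A(f,s,s)=\{n:\abs{\xx_n^*(f)}=s\}$ --- that is, control of projections onto sets where all coordinates have exactly the same modulus. Equivalently, by the indicator reformulation, $\norm{\lambda\Ind_{\varepsilon,A}}\le C\norm{\lambda\Ind_{\varepsilon,A}+f}$ whenever $\norm{f}_\infty\le\abs{\lambda}$ and $\supp(f)\cap A=\emptyset$; in particular, with $\lambda=\norm{f}_\infty$ this already gives that $\XB$ is quasi-greedy for largest coefficients. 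The goal is to upgrade this to the truncation inequality $\min_{n\in A}\abs{\xx_n^*(f)}\,\norm{\Ind_{\varepsilon(f),A}}\le C'\norm{f}$ for an \emph{arbitrary} greedy set $A$ of $f$ (not just one with $\osc(f,A)=1$).

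The mechanism for this upgrade should be a dyadic decomposition of the greedy set $A$ by the size of the coefficients. Write $m=\min_{n\in A}\abs{\xx_n^*(f)}$ and split $A=\bigcup_{k\ge 0} A_k$ where $A_k=\{n\in A: 2^k m\le \abs{\xx_n^*(f)}<2^{k+1}m\}$ (only finitely many $A_k$ are nonempty). On each $A_k$ the oscillation is at most $2$, so after a further bisection each piece has oscillation $\le$ something close to $1$; more efficiently, I would like to bound $m\norm{\Ind_{\varepsilon(f),A_k}}$ by $C\cdot 2^{-k}\norm{S_{A_k}(f)}$ using $(1,1)$-BOU on the near-constant block (with a harmless factor of $2$), and then bound $\norm{S_{A_k}(f)}\lesssim \norm{f}$ using that $A_k$ is a greedy set of $f$ together with quasi-greediness for largest coefficients --- but quasi-greediness for largest coefficients requires $\osc=1$, so this step needs care and is where I expect the main obstacle. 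The resolution should be the standard ``truncation trick'': for a greedy set $B$ of $f$, the truncation $\sum_{n\in B}\min(\abs{\xx_n^*(f)},m)\sgn(\xx_n^*(f))\xx_n$ can be written as an average (over signs/convex combinations, or via an integral over thresholds $0<u<m$ of $S_{B\cap A(f,u,\infty)}$) of genuine projections, so its norm is $\lesssim \qglc\cdot \norm{f}$ by integrating/averaging the largest-coefficient estimate; taking $B=A$ and noting this truncation equals $m\,\Ind_{\varepsilon(f),A}$ (since every coordinate on $A$ has modulus $\ge m$) gives exactly $m\norm{\Ind_{\varepsilon(f),A}}\le C'\norm{f}$. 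Thus the real content of $(ii)\Rightarrow(iii)$ is showing that $(1,1)$-BOU implies the convex-averaging/truncation estimate, which in turn follows from quasi-greediness for largest coefficients by the argument in the proof of Theorem~\ref{thm:QGvsTQG}; the summing over dyadic blocks $A_k$ with the $2^{-k}$ gain then handles the quasi-norm triangle inequality without logarithmic loss. Assembling these three implications closes the cycle and proves the theorem.
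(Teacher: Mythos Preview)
Your argument for $(ii)\Rightarrow(iii)$ contains a fatal gap: you have misidentified $(1,1)$-bounded-oscillation unconditionality with quasi-greediness for largest coefficients. The correct indicator reformulation of $(1,1)$-BOU is
\[
\norm{\lambda\,\Ind_{\varepsilon,A}}\le C\,\norm{\lambda\,\Ind_{\varepsilon,A}+f}
\]
for all $\lambda\ne 0$, all $\varepsilon\in\EE^A$, and \emph{all} $f$ with $\supp(f)\cap A=\emptyset$, with \emph{no} constraint $\norm{f}_\infty\le|\lambda|$. The extra constraint you inserted is precisely what separates QGLC from $(1,1)$-BOU, and the distinction is not cosmetic: Theorem~\ref{thm:example} of this very paper constructs a basis that is QGLC (indeed nearly unconditional and LPU) but not truncation quasi-greedy. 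Hence the route ``$(1,1)$-BOU $\Rightarrow$ QGLC $\Rightarrow$ TQG via the averaging/truncation trick behind Theorem~\ref{thm:QGvsTQG}'' cannot work; the second arrow is simply false, and the proof of Theorem~\ref{thm:QGvsTQG} uses full quasi-greediness (control of $S_B(f)$ for \emph{every} greedy set $B$), not merely QGLC.

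The paper runs the cycle with the difficulty placed elsewhere. The easy step is $(iii)\Rightarrow(ii)$: given $g=\Ind_{\varepsilon,A}+f$ with $\supp(f)\cap A=\emptyset$, set $B=\{n:|\xx_n^*(f)|>1\}$; both $B$ and $A\cup B$ are greedy sets of $g$, and the $p$-triangle inequality gives $\norm{\Ind_{\varepsilon,A}}^p\le\norm{\Ind_{\varepsilon,A\cup B}}^p+\norm{\Ind_{\varepsilon,B}}^p\le 2\tqg^p\norm{g}^p$. The substantive step is $(ii)\Rightarrow(i)$: a perturbation argument (using Lemma~\ref{lem:pconvex} to compare $S_A(f)$ with a maximal $\Ind_{\varepsilon(f),B}$, then applying $(1,1)$-BOU to $\Ind_{\varepsilon(f),B}+(f-S_B(f))$, which is exactly where one needs the inequality \emph{without} the QGLC restriction since $f-S_B(f)$ may have coefficients larger than $1$) yields $\Phi(t)<\infty$ for $t$ close to $1$; the subadditivity of $s\mapsto\Phi^p(e^{-s})$ (Lemma~\ref{lem:Philogconvex}) then bootstraps to all $t\in(0,1]$. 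Finally $(i)\Rightarrow(iii)$ is a direct dyadic decomposition of the greedy set with geometric summation. Your sketch of $(iii)\Rightarrow(i)$ is also shaky (the blocks $A_j$ are not greedy sets of $f$, so truncation quasi-greediness gives no immediate control of $\min_{n\in A_j}|\xx_n^*(f)|\,\norm{\Ind_{\varepsilon(f),A_j}}$ in terms of $\norm{f}$), but that could be salvaged by passing through $(ii)$; the essential error is the one above.
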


If we concede that truncation quasi-greedy bases are close to being quasi-greedy, Theorem~\ref{thm:main} sheds light onto the long-standing conjecture that any weakly null normalized sequence has a quasi-greedy subsequence (see \cite{DKK2003}*{Theorem 5.4} and \cite{DOSZ2009}*{Problem 5}). Also, Theorem~\ref{thm:main} upgrades the relevance of determining whether every quasi-greedy for largest coefficients basis is truncation quasi-greedy (see \cite{AAB2023}*{Question 5.1}). We solve the latter question in the negative.

\begin{theorem}\label{thm:example}
There is a Schauder basis $\XB$ of a Banach space $\XX$ that is nearly unconditional but is not bounded-oscillation unconditional. Moreover, given $1\le p<\infty$, $\XB$ and $\XX$ can be chosen so that
\begin{equation}\label{eq:FFlp}
\norm{\Ind_{\varepsilon,A}}\approx \abs{A}^{1/p}, \quad A\subset_f\NN, \, \varepsilon\in\EE^A.
\end{equation}
\end{theorem}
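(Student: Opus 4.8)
The plan is to combine Theorems~\ref{thm:AAB2023} and~\ref{thm:main}: by the former, being nearly unconditional is the same as being quasi-greedy for largest coefficients, and by the latter, being bounded-oscillation unconditional is the same as being truncation quasi-greedy. So it suffices to construct, for each $1\le p<\infty$, a Schauder basis $\XB=(\ee_n)_{n=1}^\infty$ of a Banach space $\XX$ which is quasi-greedy for largest coefficients, is \emph{not} truncation quasi-greedy, and has $\norm{\Ind_{\varepsilon,A}}\approx\abs{A}^{1/p}$ for all $A\subset_f\NN$ and $\varepsilon\in\EE^A$. A convenient route is to build $\XX$ as an $\ell_p$-sum $\XX=\bigl(\bigoplus_{m\ge 1}X_m\bigr)_p$ of finite-dimensional spaces $X_m$, each carrying a basis that is quasi-greedy for largest coefficients with a constant independent of $m$, is democratic with fundamental function $\approx\abs{\cdot}^{1/p}$ uniformly in $m$, has basis vectors, biorthogonal functionals and basis constant uniformly bounded, and exhibits a failure of truncation quasi-greediness by a factor $c_m\to\infty$. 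Concatenating these bases gives $\XB$; routine $\ell_p$-sum estimates then show that $\XB$ is a norm-bounded Schauder basis that is quasi-greedy for largest coefficients and has $\norm{\Ind_{\varepsilon,A}}\approx\abs{A}^{1/p}$, while the failure of truncation quasi-greediness passes to $\XX$ because a vector supported in a single block $X_m$ has the same norm in $\XX$, and a greedy set for it in $X_m$ is still a greedy set in $\XX$ with unchanged minimal coefficient and unchanged $\norm{\Ind_{\varepsilon,A}}$.

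Each block $X_m$ will be presented by a norming set, $\norm{x}_{X_m}=\sup\{\,\abs{\langle\phi,x\rangle}:\phi\in W_m\,\}$, where $W_m$ is a finite family of functionals of $\ell_{p'}$-norm at most $1$, each one consisting of a ``head'' --- supported on an arbitrary finite set, with arbitrary signs, which forces $\norm{\Ind_{\varepsilon,A}}\gtrsim\abs{A}^{1/p}$, the reverse inequality being immediate from $\norm{\phi}_{p'}\le1$ --- glued to a ``tail'' whose location and sign pattern are dictated by the head through a coding device of Maurey--Rosenthal type. The witness $f_m\in X_m$ will have a greedy set $A_m$ with $\abs{A_m}=m$ whose coefficients occupy geometrically many levels $\delta_m2^0<\delta_m2^1<\dots<\delta_m2^{L_m}$, with $L_m\to\infty$ and most of the $\ell_1$-mass near the bottom level $\delta_m$, while the remaining coordinates of $f_m$ are placed exactly on the tail that the coding predicts for $(A_m,\varepsilon(f_m)|_{A_m})$ but carry the \emph{opposite} signs; the cancellation so produced is arranged so that $\abs{\langle\phi,f_m\rangle}=O(\delta_m m^{1/p}/L_m)$ for every $\phi\in W_m$, whence $\norm{f_m}_{X_m}\lesssim\delta_m m^{1/p}/L_m$ while $\delta_m\norm{\Ind_{\varepsilon(f_m),A_m}}\approx\delta_m m^{1/p}$, so $c_m\approx L_m$. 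That the spreading must be geometric is forced: splitting $A_m$ into its dyadic coefficient-level sets, bounding each by near unconditionality (which $X_m$ has, being quasi-greedy for largest coefficients) and summing shows that any failure of truncation quasi-greediness is at most logarithmic in $\osc(f_m,A_m)=2^{L_m}$, so $c_m$ can be taken only of order $\log m$.

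The main obstacle is verifying that $X_m$ is quasi-greedy for largest coefficients with a constant independent of $m$, that is, $\norm{\Ind_{\varepsilon,A}}_{X_m}\le C\norm{\Ind_{\varepsilon,A}+g}_{X_m}$ whenever $A\cap\supp(g)=\emptyset$ and $\norm{g}_\infty\le1$. In norming-set language this means producing, for each such $g$, a functional $\psi\in W_m$ whose head realises the democracy lower bound on $A$ and whose (coded) tail is either disjoint from $\supp(g)$ or signed so that $\langle\psi,g\rangle$ does no harm, so that $\langle\psi,\Ind_{\varepsilon,A}+g\rangle\gtrsim\norm{\Ind_{\varepsilon,A}}$; since $\supp(g)$ is a fixed finite set, the coding must be rich enough to always supply such a $\psi$. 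The genuinely delicate part --- the technical heart of the construction --- is to make the three requirements hold simultaneously: the democracy lower bound $\norm{\Ind_{\varepsilon,A}}\gtrsim\abs{A}^{1/p}$, the smallness $\norm{f_m}_{X_m}\lesssim\delta_m m^{1/p}/L_m$, and the uniform quasi-greediness for largest coefficients. This is exactly what pins down the coding function and the parameters $\delta_m$, $L_m$, $\dim X_m$, and the combinatorial estimate certifying that the coding behaves as required is where most of the work lies. One finally arranges $W_m$ to be stable under restricting functionals to initial intervals so that the partial-sum projections are uniformly bounded and $\XB$ is genuinely a Schauder basis, and notes that splitting the sign behaviour into free heads and coded tails is precisely what keeps $\XB$ conditional --- which it must be, since unconditional bases are truncation quasi-greedy --- while still permitting the democracy estimate.
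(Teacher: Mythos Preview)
Your reduction via Theorems~\ref{thm:AAB2023} and~\ref{thm:main}, and the passage to an $\ell_p$-sum of finite-dimensional blocks whose QGLC and democracy constants are uniform while their truncation quasi-greedy constants blow up, is exactly the paper's route: the proof of Theorem~\ref{thm:example} simply invokes Theorem~\ref{thm:Miguel}, which supplies precisely such blocks.

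Where you diverge from the paper, and where your proposal remains a sketch rather than a proof, is the block construction itself. The paper does not use Maurey--Rosenthal coding. It fixes an auxiliary unconditional basis $\YB$ (the unit vector basis of weak-$\ell_p$ for $p>1$, or Tsirelson for $p=1$) together with a decreasing positive sequence $(c_n)\notin\ell_p$ with $\sum c_n\yy_n$ convergent in $\YY$, lays out integer blocks $A$, $(I_n)_{n\in A}$, $(B_n)_{n\in A}$, and norms $\XX_M$ via an explicit family $\mathcal{A}$ of sequences satisfying balance constraints (R.1)--(R.4). The witness is $f_0=\Ind_B+\sum_{n\in A}M_2c_n\,\xx_n+\sum_{k\in I}c_k\,\xx_k$: the constraint (R.2) forces every \emph{completed} block in every seminorm to vanish on $f_0$, so $\norm{f_0}$ stays bounded, while a particular $\Delta\in\mathcal{A}$ makes $\norm{\Ind_B}$ large. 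There are no geometric coefficient levels on the greedy set; the mechanism is block-by-block cancellation enforced by the very definition of $\mathcal{A}$, and the verification of QGLC (and in fact LPU) is done by exhibiting, for each test configuration, an explicit $\Delta\in\mathcal{A}$.

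Your outline, by contrast, has a concrete gap at exactly the point you flag as the ``technical heart''. You assert that the cancellation between $A_m$ and its coded tail yields $\abs{\langle\phi,f_m\rangle}=O(\delta_m m^{1/p}/L_m)$ for \emph{every} $\phi\in W_m$, but the cancellation you describe acts only on the single functional whose head is $(A_m,\varepsilon(f_m)|_{A_m})$. Any other $\phi$ whose head meets $A_m$ sees coefficients up to $2^{L_m}\delta_m$ with no compensating tail, and nothing you have said about $W_m$ prevents this. Simultaneously keeping heads free enough to give the democracy lower bound, rigid enough to force cancellation on $f_m$ against \emph{all} functionals, and flexible enough to dodge an arbitrary $g$ in the QGLC test is the entire difficulty, and it is not addressed. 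As written this is a list of desiderata for a construction, not a construction.
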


Notice that Theorem~\ref{thm:example} evinces that Theorem~\ref{thm:EPBOU} constitutes a real improvement with respect to Theorem~\ref{thm:EPNU}. Note also that \eqref{eq:FFlp} is a democracy-like property. For the record, a basis $\XB$ is said to be \emph{super-democratic} if there is a constant $C$ so that $\norm{\Ind_{\varepsilon,A}}\le C \norm{\Ind_{\delta,B}}$ for all finite subsets $A$ and $B$ of $\NN$ with $\abs{A}\le \abs{B}$, all $\varepsilon\in\EE^A$, and all $\delta\in\EE^B$. If this inequality holds in the particular case that $\varepsilon$ and $\delta$ are constant, we say that $\XB$ is \emph{democratic}. The \emph{fundamental function} of $\XB$ is the function defined for each $m\in\NN$ as
\[
\udf(m)=\sup\enbrace{ \norm{\Ind_{\varepsilon,A}} \colon \abs{A}\le m, \, \varepsilon\in\EE^A }.
\]
With this terminology, \eqref{eq:FFlp} says that $\XB$ is superdemocratic and its fundamental function grows as $(m^{1/p}) _{m=1}^\infty$.

We close this introductory section by briefly explaining the structure of the paper. Section~\ref{sect:BouvsTQG} will be devoted to proving Theorem~\ref{thm:main} and establishing some properties of the function $\Phi$ associated with bounded-oscillation unconditionality. In Section~\ref{sect:EltonNotDilworth}, we prove Theorem~\ref{thm:example}. In Section~\ref{TOvsQGLC}, we take advantage of the techniques we develop to tell apart other greedy-like properties that have appeared in the literature.

\section{Characterization of bounded-oscillation unconditional bases}\label{sect:BouvsTQG}\noindent
We start our study by noticing that if a basis $\XB$ is $(1/t,1/t)$-bounded-oscillation unconditional for some $0<t \le 1$, then it is $(1/t,d)$-bounded-oscillation unconditional for every $1\le d \le 1/t$, and
\[
\beta(1/t,d)\le \Phi(t), \quad 1\le d \le \frac{1}{t}<\infty.
\]
Hence, the basis $\XB$ is bounded-oscillation unconditional if and only if it is $(1/t,1/t)$-bounded-oscillation unconditional for all $0<t \le 1$.

\begin{lemma}\label{lem:Philogconvex}
Let $\XB$ be a basis of a $p$-Banach space $\XX$, $0<p\le 1$. The function $s\mapsto \Phi^p(e^{-s})$, $s>0$, is subadditive. That is, if $\XB$ is $(1/t_j,1/t_j)$-bounded-oscillation unconditional, $0<t_j<1$, $j=1$, $2$, then it is $(1/(t_1t_2),(1/(t_1t_2))$-bounded-oscillation unconditional, and
\[
\Phi(t_1t_2) \le \enpar{\Phi^p(t_1)+\Phi^p(t_2)}^{1/p}.
\]
\end{lemma}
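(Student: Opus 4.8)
The plan is to work with the reformulation of $(1/t,1/t)$-bounded-oscillation unconditionality recorded immediately before the statement: $\Phi(t)$ is the least constant $C$ such that $\norm{S_A(f)}\le C\norm{f}$ for every $f\in\XX$, every $s\in(0,\infty)$, and every $A\subseteq A(f,ts,s)$. So it suffices to fix $f\in\XX$, $s\in(0,\infty)$, and $A\subseteq A(f,t_1t_2s,s)$, and show $\norm{S_A(f)}\le(\Phi^p(t_1)+\Phi^p(t_2))^{1/p}\norm{f}$; since $t_1t_2<1$, taking the supremum over all such $A$ then yields the asserted bound on $\Phi(t_1t_2)$, and the substitution $t_j=e^{-s_j}$ restates it as subadditivity of $s\mapsto\Phi^p(e^{-s})$ on $(0,\infty)$.

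The heart of the argument is to split $A$ at the intermediate level $t_2 s$. Set
\[
A_1=\enbrace{n\in A\colon t_2 s\le\abs{\xx_n^*(f)}\le s},\qquad A_2=A\setminus A_1=\enbrace{n\in A\colon t_1 t_2 s\le\abs{\xx_n^*(f)}<t_2 s}.
\]
Then $A_1\subseteq A(f,t_2 s,s)$, so the hypothesis applied with $t=t_2$ gives $\norm{S_{A_1}(f)}\le\Phi(t_2)\norm{f}$; and, writing $r=t_2 s$, we have $t_1 r\le\abs{\xx_n^*(f)}<r$ for $n\in A_2$, i.e.\ $A_2\subseteq A(f,t_1 r,r)$, so the hypothesis applied with $t=t_1$ and threshold $r$ gives $\norm{S_{A_2}(f)}\le\Phi(t_1)\norm{f}$. (If $A_1$ or $A_2$ is empty, the corresponding inequality is trivial.)

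Since $A=A_1\sqcup A_2$ we have $S_A(f)=S_{A_1}(f)+S_{A_2}(f)$, so the $p$-triangle inequality valid in a $p$-Banach space yields
\[
\norm{S_A(f)}^p\le\norm{S_{A_1}(f)}^p+\norm{S_{A_2}(f)}^p\le\enpar{\Phi^p(t_1)+\Phi^p(t_2)}\norm{f}^p,
\]
which is exactly the desired estimate. There is no genuine obstacle here; the only point requiring care is the bookkeeping that places $A_1$ and $A_2$ inside the correct sets $A(f,\cdot,\cdot)$ so that each hypothesis applies with the right constant, and the exponent $p$ enters solely through the quasi-norm inequality in the last display. (An entirely symmetric split at level $t_1 s$ gives the same estimate.)
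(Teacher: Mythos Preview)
Your proof is correct and follows essentially the same approach as the paper: split $A$ into two pieces at an intermediate coefficient level so that each piece has oscillation at most $1/t_j$, apply the hypothesis to each piece, and combine via the $p$-triangle inequality. The only cosmetic difference is that the paper phrases the splitting in terms of $b=\min_{n\in A}\abs{\xx_n^*(f)}$ and the oscillation condition, whereas you use the equivalent $A(f,ts,s)$ formulation; the underlying argument is identical.
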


\begin{proof}
Let $f\in\XX$ and $A\subset_f \NN$ with $\osc(f,A) \le 1/(t_1 t_2)$. There is a partition $(A_1,A_2)$ of $A$ with $\osc(f,A_j) \le 1/t_j$, $j=1$, $2$. Indeed, we can choose,
\begin{align*}
A_1&=\enbrace{n\in A \colon b \le \abs{\xx_n^*(f)} \le b t_1}, \\
A_2&=\enbrace{n\in A \colon b t_1 <\abs{\xx_n^*(f)} \le b t_1 t_2},
\end{align*}
where $b:=\min_{n\in A}\abs{\xx_n^*(f)}$. Since
\[
\norm{S_A(f)} \le
\enpar{\norm{S_{A_1}(f)}^p + \norm{S_{A_2}(f)}^p }^{1/p}
\le \enpar{\Phi^p(t_1)+\Phi^p(t_2)}^{1/p},
\]
we are done.
\end{proof}

\begin{lemma}\label{lem:SomeVsAll}
Let $\XB$ be a basis of a $p$-Banach space $\XX$, $0<p\le 1$. Suppose that $\XB$ is $(1/s,1/s)$-bounded-oscillation unconditional for some $0<s<1$. Then $\XB$ is
bounded-oscillation unconditional. Quantitatively,
\[
\Phi(t) \le \ceil{ \log_s(t)}^{1/p} \Phi(s), \quad 0<t<1.
\]
\end{lemma}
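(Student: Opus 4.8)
The plan is to bootstrap the single hypothesis into the whole family of estimates by iterating the subadditivity established in Lemma~\ref{lem:Philogconvex}, together with the monotonicity of $\Phi$. First I would fix $0<t<1$ and set $n=\ceil{\log_s(t)}$. Since $0<s<1$ and $0<t<1$, the quantity $\log_s(t)$ is strictly positive, so $n$ is a positive integer; and from $n\ge\log_s(t)$ and the fact that $x\mapsto s^x$ is decreasing (because $s<1$) one gets $s^n\le s^{\log_s(t)}=t$.

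Next I would show, by induction on $m\ge 1$, that $\XB$ is $(1/s^m,1/s^m)$-bounded-oscillation unconditional with $\Phi(s^m)\le m^{1/p}\,\Phi(s)$. The base case $m=1$ is exactly the hypothesis. For the inductive step, write $s^{m+1}=s^m\cdot s$, note that $s^m,s\in(0,1)$, and apply Lemma~\ref{lem:Philogconvex} to this pair to obtain
\[
\Phi(s^{m+1})\le\enpar{\Phi^p(s^m)+\Phi^p(s)}^{1/p}\le\enpar{(m+1)\,\Phi^p(s)}^{1/p}=(m+1)^{1/p}\Phi(s).
\]

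Finally, since $\Phi$ is non-increasing on $(0,1]$ and $s^n\le t$, I would conclude
\[
\Phi(t)\le\Phi(s^n)\le n^{1/p}\Phi(s)=\ceil{\log_s(t)}^{1/p}\,\Phi(s),
\]
which is the asserted estimate. In particular $\Phi(t)<\infty$ for every $0<t<1$, and $\Phi(1)\le\Phi(s)<\infty$ by monotonicity, so by the observation recorded just before Lemma~\ref{lem:Philogconvex} the basis $\XB$ is bounded-oscillation unconditional.

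I do not expect a genuine obstacle: this is a routine iteration. The only points that need attention are the directions of the inequalities — $x\mapsto s^x$ is \emph{decreasing} and $\Phi$ is \emph{non-increasing}, so the two reversals compose correctly — and the degenerate range $s\le t<1$, where $n=1$ and the bound reduces to the trivial $\Phi(t)\le\Phi(s)$.
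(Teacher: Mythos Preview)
Your argument is correct and follows exactly the paper's approach: iterate Lemma~\ref{lem:Philogconvex} to obtain $\Phi(s^n)\le n^{1/p}\Phi(s)$ for all $n\in\NN$, then invoke the monotonicity of $\Phi$ together with $s^{\ceil{\log_s t}}\le t$. The paper compresses this into two lines, but your expanded version with the explicit induction and the check of inequality directions is accurate and complete.
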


\begin{proof}
By Lemma~\ref{lem:Philogconvex}, $\Phi(s^n)\le n^{1/p}\Phi(s)$ for all $n\in\NN$. Since $\Phi$ is non-increasing, we are done.
\end{proof}

We are ready to tackle the proof of Theorem~\ref{thm:main}. We first recall the following auxiliary result from \cite{AABW2021}, which plays an important role in the study of the thresholding greedy algorithm in general quasi-Banach spaces. \begin{lemma}[\cite{AABW2021}*{Theorem 2.2}]\label{lem:pconvex}
Given a finite family $(f_n)_{n\in \Nt}$ in a $p$-Banach space, $0<p\le 1$, we have
\[
\norm{\sum_{n\in\Nt} a_n \, f_n} \le A_p \sup_{A\subset\Nt} \norm{\sum_{n\in A} f_n} ,\quad \abs{a_n}\le 1,
\]
where
\[
A_p = \frac{1}{(2^p -1)^{1/p}}.
\]
\end{lemma}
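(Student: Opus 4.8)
The plan is to decompose the coefficient vector $(a_n)_{n\in\Nt}$ dyadically into $\{0,1\}$-valued layers and then exploit the $p$-subadditivity of the quasi-norm of $\XX$, so that the whole estimate collapses to a geometric series. After discarding the indices with $a_n=0$ (which alters neither side), I would fix, for each $n\in\Nt$, a binary expansion
\[
\abs{a_n}=\sum_{k=1}^{\infty}\varepsilon_{n,k}\,2^{-k},\qquad \varepsilon_{n,k}\in\{0,1\},
\]
available precisely because $0<\abs{a_n}\le 1$. Setting $A_k=\enbrace{n\in\Nt\colon\varepsilon_{n,k}=1}$ and
\[
g_k=\sum_{n\in A_k}\sgn(a_n)\,f_n\qquad(k\ge1),
\]
each $g_k$ is a unimodularly weighted subset sum of $(f_n)_{n\in\Nt}$; hence $\enbrace{g_k\colon k\in\NN}$ is a finite set and $M_0:=\sup_k\norm{g_k}<\infty$, and when every $a_n\ge0$ one simply has $g_k=\sum_{n\in A_k}f_n$.

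Next I would check the identity $\sum_{n\in\Nt}a_nf_n=\sum_{k=1}^{\infty}2^{-k}g_k$, with the series convergent in $\XX$. For the partial sums $S_K=\sum_{k=1}^{K}2^{-k}g_k$ one has
\[
S_K=\sum_{n\in\Nt}\enpar{\sum_{k=1}^{K}\varepsilon_{n,k}\,2^{-k}}\sgn(a_n)\,f_n,
\]
and, $\Nt$ being finite and $\sum_{k=1}^{K}\varepsilon_{n,k}2^{-k}\to\abs{a_n}$ for each $n$, the sequence $(S_K)_K$ converges in $\XX$ to $\sum_{n\in\Nt}\abs{a_n}\sgn(a_n)f_n=\sum_{n\in\Nt}a_nf_n$. (That $(S_K)_K$ is Cauchy, and hence convergent by completeness of $\XX$, also follows at once from $\norm{S_{K'}-S_K}^p\le M_0^p\sum_{k=K+1}^{K'}2^{-kp}$, an instance of the $p$-subadditivity used below.)

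From here the estimate is immediate: using that $\norm{\cdot}^p$ is subadditive on the $p$-Banach space $\XX$, for every $K$,
\[
\norm{S_K}^p\le\sum_{k=1}^{K}2^{-kp}\norm{g_k}^p\le M^p\sum_{k=1}^{\infty}2^{-kp}=\frac{M^p}{2^p-1},
\]
where $M:=\sup_{A\subset\Nt}\norm{\sum_{n\in A}f_n}$ — interpreted, when the $\sgn(a_n)$ are not all equal, as the supremum of $\norm{\sum_{n\in A}\eta_nf_n}$ over finite $A\subset\Nt$ and $\eta\in\EE^A$, which is the quantity that dominates each $g_k$. Letting $K\to\infty$ and using the continuity of $x\mapsto\norm{x}^p$ yields
\[
\norm{\sum_{n\in\Nt}a_nf_n}\le\enpar{\frac{1}{2^p-1}}^{1/p}M=A_p\,M.
\]

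The hard part is really only the convergence of the dyadic series and the identification of its limit in a general, possibly non-locally-convex, $p$-Banach space: here one must rely on completeness of $\XX$ together with the fact that the blocks $g_k$ take only finitely many values (so that $\sum_k 2^{-kp}\norm{g_k}^p<\infty$), rather than on any absolute-convergence argument that would tacitly require local convexity. The remaining pieces — existence of the binary expansions, the (routine) sign bookkeeping, and the summation $\sum_{k\ge1}2^{-kp}=1/(2^p-1)$ that reproduces exactly the constant $A_p=(2^p-1)^{-1/p}$ — present no difficulty.
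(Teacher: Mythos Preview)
The paper does not supply its own proof of this lemma; it is merely quoted from \cite{AABW2021}. Your dyadic decomposition is the standard argument (and presumably what appears in that reference), and it is fully correct when all $a_n\in[0,1]$.

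You have in fact put your finger on a genuine imprecision in the lemma as stated here: with $\abs{a_n}\le 1$ and an \emph{unsigned} supremum $\sup_{A\subset\Nt}\norm{\sum_{n\in A}f_n}$ on the right, the inequality is false in general. For $p=1$ take $\Nt=\{1,2\}$, $f_1=-f_2$ a unit vector in a Banach space, $a_1=1$, $a_2=-1$: the left side is $2$ while the right side is $A_1\cdot 1=1$. The correct hypothesis either restricts to $0\le a_n\le 1$, which is exactly the case your binary expansion handles without any sign bookkeeping, or else replaces the right-hand side by a supremum over signed subset sums, as you suggest. Every application of the lemma in this paper is of the former kind: the family is $(\varepsilon_n(f)\,\xx_n)_n$ and the coefficients are $\abs{\xx_n^*(f)}$ or $1-\abs{\xx_n^*(f)}$, all in $[0,1]$. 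So your proof covers precisely what is actually used; just be aware that your ``reinterpretation'' of $M$ is really a correction of the stated hypothesis, and it would be cleaner to say so directly rather than to redefine the quantity mid-proof.
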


\begin{proof}[Proof of Theorem~\ref{thm:main}]
Assume without loss of generality that $\XX$ is a $p$-Banach space for some $0<p\le 1$. Let $\XB=(\xx_n)_{n=1}^\infty$ be a basis of $\XX$ with dual basis $\XB^*=(\xx_n^*)_{n=1}^\infty$.

Suppose that $\XB$ is truncation quasi-greedy with constant $C$. Pick $A\subset_f\NN$, $\varepsilon\in\EE^A$ and $f\in\XX$ with $A\cap \supp(f)=\emptyset$. Set $B=\{n\in\NN \colon \abs{\xx_n^*(f)} > 1\}$. Since both $B$ and $A\cup B$ are greedy sets of $\Ind_{\varepsilon,A}+f$,
\[
\norm{\Ind_{\varepsilon,A}}^p\le \norm{\Ind_{\varepsilon,A\cup B}}^p+\norm{\Ind_{\varepsilon,B}}^p
\le 2 C^p \norm{\Ind_{\varepsilon,A}+f}^p.
\]
Hence, $\XB$ is $(1,1)$-bounded-oscillation unconditional.

Suppose now that $\XB$ is $(1,1)$-bounded-oscillation unconditional and set $C=\Phi(1)$. Let $0<t < 1$ and $f\in\XX$ with $\osc(f,A)\le 1/t$. By rescaling, we can assume that $\norm{S_A(f)}_\infty=1$. Choose $B\subset A$ such that
\[
\norm{\Ind_{\varepsilon(f),E} } \le \norm{\Ind_{\varepsilon(f),B} }, \quad E\subset A.
\]
By Lemma~\ref{lem:pconvex},
\begin{equation}\label{eq:aux}
\norm{S_A(f)} \le A_p \norm{\Ind_{\varepsilon(f),B} }
\end{equation}
and
\[
\norm{ \Ind_{\varepsilon(f),B} -S_B(f)}\le A_p(1-t) \norm{\Ind_{\varepsilon(f),B}}.
\]
Using the last inequality we obtain
\begin{align*}
\norm{\Ind_{\varepsilon(f),B} }^p &\le C^p \norm{ \Ind_{\varepsilon(f),B} + f-S_B(f) }^p\\
&\le C^p \norm{f}^p + C^p \norm{ \Ind_{\varepsilon(f),B}-S_B(f)}^p\\
&\le C^p \norm{f}^p + A_p^pC^p (1-t)^p \norm{\Ind_{\varepsilon(f),B} }^p.
\end{align*}
If $t>1-1/(CA_p)$ this inequality gives
\[
\norm{\Ind_{\varepsilon(f),B} } \le C (1- A_p^p C^p (1-t)^p)^{-1/p} \norm{f}.
\]
Combining with \eqref{eq:aux} gives that $\XB$ is $(1/t,1/t)$-bounded-oscillation unconditional. By Lemma~\ref{lem:SomeVsAll}, $\XB$ is bounded-oscillation unconditional.

Suppose now that $\XB$ is bounded-oscillation unconditional. Pick $0<t<1$ arbitrary. Let $A$ be a greedy set of $f\in\XX$. Set $b=\min_{n\in A} \abs{\xx_n^*(f)}$ and
assume that $b>0$; otherwise there is nothing to prove. Put
\[
A_k=\enbrace{n\in A \colon b t^{1-k} \le \abs{\xx_n^*(f)} < b t^{-k}}, \quad k\in\NN,
\]
so that $(A_k)_{k=1}^\infty$ is a partition of $A$. For each $k\in\NN$, choose $B_k\subset A_k$ such that
\[
\norm{S_B(f)}\le \norm{S_{B_k}(f)}, \quad B\subset A_k.
\]
By Lemma~\ref{lem:pconvex},
\[
\norm{\sum_{n\in A_k} a_n\, \xx_n^*(f) \, \xx_n} \le A_p\norm{S_{B_k}(f)}, \quad 0 \le a_n \le 1.
\]
Since $\osc(B_k,f)\le 1/t$ for all $k\in\NN$,
\begin{align*}
b^p \norm{\Ind_{\varepsilon(f), A}(f)}^p&\le \sum_{k=1}^\infty b^p\norm{\Ind_{\varepsilon(f), A_k}(f)}^p\\
&\le \sum_{k=1}^\infty t^{(k-1)p} A_p^p \norm{S_{B_k}(f)}^p\\
&\le \Phi^p(t) \norm{f}^p A_p^p \sum_{k=1}^\infty t^{(k-1)p}\\
&= \frac{A_p^p}{1-t^p} \Phi^p(t) \norm{f}^p.
\end{align*}
Hence, $\XB$ is truncation quasi-greedy.
\end{proof}

\begin{remark} \label{remark: QG->TQG}
We can use Theorem~\ref{thm:main} to give an alternative proof of Theorem~\ref{thm:QGvsTQG}. In fact, it is known that quasi-greedy bases are $(1,1)$-bounded-oscillation unconditional (see \cite{DKKT2003}*{Proof of Theorem 5.4}). For the sake of completeness we include a new proof of this. Let $\XB$ a quasi-greedy basis with constant $C$ of a $p$-Banach space $\XX$, $0<p\le 1$. Let $A\subset_f\NN$, $\varepsilon\in\EE^A$ and $f\in\XX$ with $\supp(f)\cap A$. Set $B=\{n\in\NN \colon \abs{\xx_n^*(f)} > 1\}$. Since both $B$ and $A\cup B$ are greedy sets of $g:=\Ind_{\varepsilon,A}+f$,
\[
\norm{\Ind_{\varepsilon,A}}^p=\norm{S_A(g)}^p
\le \norm{S_{A\cup B}(g)}^p+\norm{S_B(g)}^p
\le 2 C^p \norm{g}^p.
\]
Therefore, $\XB$ is $(1,1)$-bounded-oscillation unconditional with $\Phi(1)\le 2^{1/p}C$.
\end{remark}

Before we continue, let us write down the quantitative estimates that we obtain in the proof of Theorem~\ref{thm:main}. Given a basis $\XB$ of a $p$-Banach space $\XX$, $0<p\le 1$, let $\tqg$ the optimal constant $C$ such that $\XB$ is truncation quasi-greedy with constant $C$. We have
\begin{align}
\Phi(1) & \le 2^{1/p} \tqg,\nonumber\\
\Phi(t) & \le A_p \inf_{1-1/(A_p\Phi(1))<s<1} \ceil{ \log_s(t)}^{1/p} \Phi(1) (1- A_p^p\Phi^p(1) (1-s)^p)^{-1/p}\label{eq:BOU},\\
\tqg &\le A_p\inf_{0<t<1} (1-t^p)^{-1/p} \Phi(t).\nonumber
\end{align}

The following ready consequence of \eqref{eq:BOU} improves \cite{AAB2023b}*{Theorem 6.5}, which establishes a similar estimate for the function $\phi$.

\begin{corollary}\label{corollaryPhilog}
Let $\XB$ be a bounded-oscillation unconditional basis of a $p$-Banach space $\XX$. There is a constant $C$ depending only on $\Phi(1)$ and $p$ such that
\[
\Phi(t)\le C\left(1-\log t\right)^{{1}/{p}}, \quad 0<t\le 1.
\]
\end{corollary}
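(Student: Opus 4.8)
The plan is to specialize the quantitative estimate \eqref{eq:BOU} to a single fixed value of the free parameter $s$ and then estimate the ceiling function that appears in it. Since $\XB$ is bounded-oscillation unconditional it is in particular $(1,1)$-bounded-oscillation unconditional, so that $\Phi(1)<\infty$; moreover $\Phi(1)\ge 1$ (test the defining inequality of $(1,1)$-bounded-oscillation unconditionality on $f=\xx_n$ with $A=\{n\}$) and $A_p=(2^p-1)^{-1/p}\ge 1$. Hence $s_0:=1-\frac{1}{2A_p\Phi(1)}$ satisfies $0<1-\frac{1}{A_p\Phi(1)}<s_0<1$, so it is an admissible choice in \eqref{eq:BOU}, and since $A_p^p\Phi^p(1)(1-s_0)^p=2^{-p}$, plugging $s=s_0$ into \eqref{eq:BOU} gives, for $0<t<1$,
\[
\Phi(t)\le A_p\,\Phi(1)\,(1-2^{-p})^{-1/p}\,\ceil{\log_{s_0}(t)}^{1/p}.
\]

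Next I would bound the ceiling. For $0<t<1$ we have $\log_{s_0}(t)=\frac{-\log t}{-\log s_0}>0$, hence $\ceil{\log_{s_0}(t)}\le 1+\frac{-\log t}{c_0}$ with $c_0:=-\log s_0>0$, a constant depending only on $p$ and $\Phi(1)$. Using the elementary inequality $1+\frac{a}{c_0}\le \max(1,c_0^{-1})\,(1+a)$, valid for every $a\ge 0$, with $a=-\log t\ge 0$, we obtain
\[
\ceil{\log_{s_0}(t)}^{1/p}\le \enpar{\max(1,c_0^{-1})}^{1/p}(1-\log t)^{1/p},\quad 0<t<1.
\]
Combining the two displays proves the claimed bound for $0<t<1$ with
\[
C:=A_p\,\Phi(1)\,(1-2^{-p})^{-1/p}\,\enpar{\max(1,c_0^{-1})}^{1/p},
\]
which depends only on $p$ and $\Phi(1)$. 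The endpoint $t=1$ is then immediate, since every factor entering $C$ is at least $1$, so $C\ge\Phi(1)=\Phi(1)(1-\log 1)^{1/p}$.

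There is no genuinely hard step here: once a fixed $s_0$ in the admissible interval of \eqref{eq:BOU} has been chosen, the argument is just elementary algebra together with the bound $\ceil{x}\le 1+x$ for $x>0$. The only point requiring minor care is checking that $s_0$ lies in the interval $\big(1-1/(A_p\Phi(1)),1\big)$ over which \eqref{eq:BOU} holds, which rests on the trivial estimates $A_p\ge 1$ and $\Phi(1)\ge 1$.
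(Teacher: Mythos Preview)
Your proof is correct and follows precisely the approach indicated in the paper, which states the corollary as a ``ready consequence of \eqref{eq:BOU}'' without spelling out the details. You have simply made explicit the choice of a fixed admissible $s_0$ and the elementary estimate of $\ceil{\log_{s_0}(t)}$ that the paper leaves implicit.
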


\begin{remark}
Note that for every bounded-oscillation unconditional basis of a $p$-Banach space, either
\[
\Phi(t)\approx (1-\log t)^{1/p},\quad 0<t\le 1,
\]
or
\[
\limsup_{t\to 0}\frac{\Phi(t)}{(-\log t)^{1/p}}=0.
\]
Indeed, this follows by combining Corollary~\ref{corollaryPhilog} with the estimate
\[
\Phi(a)\le 2^{{1}/{p}}\Phi(t)\left(\frac{-\log a }{-\log t}\right)^{{1}/{p}}, \quad 0<a\le t<1,
\]
which follows at once from the fact that $\Phi\left(s^{n}\right)\le n^{1/p}\Phi\left(s\right)$ for all $n\in \NN$ and all $0<s<1$ (see the proof of Lemma~\ref{lem:SomeVsAll}).
\end{remark}

Next, we improve Theorem~\ref{thm:EPNU}. For the proof we observe that Theorem~\ref{thm:EPBOU} still holds for semi-normalized sequences.

\begin{corollary}\label{corollaryPhilog2}
From every semi-normalized weakly null sequence in a real Banach space we can extract a bounded-oscillation unconditional subsequence with
\[
\Phi(t) \le C (1-\log t), \quad 0<t\le 1,
\]
for some universal constant $C$.\end{corollary}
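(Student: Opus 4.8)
\textbf{Proof plan for Corollary~\ref{corollaryPhilog2}.}
The plan is to combine the extraction principle of Theorem~\ref{thm:EPBOU} (in its semi-normalized form, as just observed) with the quantitative estimate \eqref{eq:BOU} that comes out of the proof of Theorem~\ref{thm:main}. First I would apply Theorem~\ref{thm:EPBOU} to the given semi-normalized weakly null sequence with a fixed convenient choice of parameters, say $D=d=1$ and some constant $C_0>8$, to obtain a subsequence $\XB$ that is $(1,1)$-bounded-oscillation unconditional with $\Phi(1)=\beta(1,1)\le C_0$; here one uses that a Banach space is a $1$-Banach space, so $p=1$. The point is that the bound on $\Phi(1)$ is now an absolute constant, independent of the original sequence.

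Next I would feed this into \eqref{eq:BOU} with $p=1$. For $p=1$ we have $A_p = 1/(2-1)=1$, and \eqref{eq:BOU} reads
\[
\Phi(t) \le \inf_{1-1/\Phi(1)<s<1} \ceil{\log_s(t)}\,\Phi(1)\,\bigl(1-\Phi(1)(1-s)\bigr)^{-1},\quad 0<t<1.
\]
Now fix one admissible value of $s$ depending only on $\Phi(1)$ — for instance $s = 1-\frac{1}{2\Phi(1)}$, which lies in the required interval and makes the factor $\bigl(1-\Phi(1)(1-s)\bigr)^{-1}=2$. With $s$ so fixed, $\ceil{\log_s t}\le 1+\frac{-\log t}{-\log s}\le 1+\frac{-\log t}{1-s}$ for $0<t<1$ (using $-\log s\ge 1-s$), and $1-s = 1/(2\Phi(1))$ depends only on $\Phi(1)$. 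Hence $\Phi(t)\le 2\Phi(1)\bigl(1+2\Phi(1)(-\log t)\bigr)\le C(1-\log t)$ for all $0<t<1$, where $C$ depends only on $\Phi(1)\le C_0$, i.e. only on the universal constant $C_0$. The case $t=1$ is $\Phi(1)\le C_0\le C$, which is immediate.

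There is essentially no obstacle here: the corollary is a bookkeeping consequence of the already-established machinery. The only minor points requiring care are (i) checking that the chosen $s$ genuinely lies in the interval $\bigl(1-1/(A_p\Phi(1)),1\bigr)$ appearing in \eqref{eq:BOU} (it does, since $\frac{1}{2\Phi(1)}<\frac{1}{\Phi(1)}$), (ii) the elementary inequality $-\log s\ge 1-s$ used to control the ceiling term, and (iii) invoking the remark preceding the corollary that the proof of Theorem~\ref{thm:EPBOU} goes through verbatim for semi-normalized (rather than normalized) weakly null sequences, so that we are entitled to start from a $(1,1)$-bounded-oscillation unconditional subsequence with an absolute bound on $\beta(1,1)$.
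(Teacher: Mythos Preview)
Your proposal is correct and follows essentially the same approach as the paper: apply Theorem~\ref{thm:EPBOU} (in its semi-normalized form) with fixed parameters to get a universal bound on $\Phi(1)$, and then deduce the logarithmic bound on $\Phi(t)$. The only difference is cosmetic: the paper packages the second step as a citation of Corollary~\ref{corollaryPhilog}, whereas you unpack that corollary by plugging a specific $s$ into \eqref{eq:BOU} and carrying out the elementary estimates explicitly; your computations (the choice $s=1-\tfrac{1}{2\Phi(1)}$, the inequality $-\log s\ge 1-s$, and the resulting bound) are all sound.
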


\begin{proof}
Note that Theorem~\ref{thm:EPBOU} still holds for semi-normalized sequences. Hence, the result follows by combining it with
Corollary~\ref{corollaryPhilog}.
\end{proof}

Next, we establish some smoothness properties of the function $\Phi$. They rely on an estimate that complements Lemma~\ref{lem:Philogconvex}.

\begin{lemma}
Let $\XB=(\xx_n)_{n=1}^\infty$ be a bounded-oscillation unconditonal basis of a $p$-Banach space $\XX$, $0<p\le 1$. For any $0<a\le b\le 1$ we have
\begin{equation}\label{eq:Berarategui}
\Phi(ab)\le \enpar{ (1-b)^p\Phi^{p}(b)+ \Phi^{p}(a) \enpar{ 1+ (1-b)^p \Phi^{p} (b) }}^{1/p}.
\end{equation}
\end{lemma}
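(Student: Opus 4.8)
The plan is to mimic the structure of the proof of Lemma~\ref{lem:Philogconvex}, but instead of splitting a set $A$ with $\osc(f,A)\le ab^{-1}\cdot\ldots$ wait --- here the estimate goes the other direction (it bounds $\Phi(ab)$ from above in terms of $\Phi(a)$ and $\Phi(b)$ with the \emph{extra} factor $(1-b)$ multiplying one of them), so the argument must be more delicate than the simple partition trick. First I would fix $f\in\XX$ and a nonempty $A\subset_f\NN$ with $\osc(f,A)\le 1/(ab)$, and rescale so that $b_0:=\min_{n\in A}|\xx_n^*(f)|$ satisfies a convenient normalization. I would then split $A$ as in Lemma~\ref{lem:Philogconvex} into $A_1=\{n\in A\colon b_0\le|\xx_n^*(f)|\le b_0/b\}$ and $A_2=\{n\in A\colon b_0/b<|\xx_n^*(f)|\le b_0/(ab)\}$, so that $\osc(f,A_1)\le 1/b$ and $\osc(f,A_2)\le 1/a$. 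The term $\|S_{A_1}(f)\|\le\Phi(b)\|f\|$ is immediate.

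The work is in estimating $\|S_{A_2}(f)\|$, where the bound should come out as $\Phi(a)\big(1+(1-b)^p\Phi^p(b)\big)^{1/p}\|f\|$ rather than the naive $\Phi(a)\|f\|$ --- but wait, that would only be needed if we had \emph{worse} information on $A_2$; actually on $A_2$ we do have $\osc(f,A_2)\le 1/a$ directly, so the naive bound $\|S_{A_2}(f)\|\le\Phi(a)\|f\|$ already holds, and then \eqref{eq:Berarategui} would follow trivially with constant $(\Phi^p(a)+\Phi^p(b))^{1/p}\le$ the claimed RHS. Since the claimed bound is strictly better (it has the $(1-b)^p$ factor in front of $\Phi^p(b)$), the refinement must instead be obtained by not using the trivial splitting on the $A_1$-part: the idea is to replace $S_{A_1}(f)$ by $\Ind_{\varepsilon(f),A_1}$ scaled appropriately and absorb the difference. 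Concretely, write $g=\frac{b_0}{b}\Ind_{\varepsilon(f),A_1}+S_{A_2}(f)$; then $\osc(g,\supp g)\le 1/a$ --- no, the coefficients on $A_1$ are now exactly $b_0/b$, and on $A_2$ they range in $(b_0/b,\ b_0/(ab)]$, so indeed $\osc(g,A_1\cup A_2)\le 1/a$, giving $\|S_{A_1\cup A_2}(g)\|=\|g\|\le\Phi(a)\|h\|$ for any $h$ with $\supp(h)\supset A$ and matching restriction... this is where I must be careful. The cleaner route: apply $(1,1)$-type arguments. Since $\|g\|\le\Phi(a)\|g+(f-S_{A_2}(f)-\frac{b_0}{b}\Ind_{\varepsilon(f),A_1})\|$ by bounded-oscillation unconditionality (the perturbation has support disjoint-ish from... no). I would instead estimate $\|\frac{b_0}{b}\Ind_{\varepsilon(f),A_1}-S_{A_1}(f)\|\le(1-b)\cdot\frac{b_0}{b}\|\Ind_{\varepsilon(f),A_1}\|$ by Lemma~\ref{lem:pconvex}, since each coefficient $\frac{b_0}{b}-|\xx_n^*(f)|$ lies in $[0,(1-b)b_0/b]$; and separately $\frac{b_0}{b}\|\Ind_{\varepsilon(f),A_1}\|=\|S_{A_1}(g')\|$ for $g'$ built from $g$, which is $\le\Phi(b)\|f\|$ after a sign-normalization step. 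Chaining these in the $p$-triangle inequality produces exactly the factor $(1-b)^p\Phi^p(b)$ next to the perturbation and a clean $\Phi^p(b)$ (from the $(1-b)^p\Phi^p(b)$ standalone term) and $\Phi^p(a)(1+(1-b)^p\Phi^p(b))$ from the main term.

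So the key steps, in order, are: (1) fix $f$, $A$ with $\osc(f,A)\le1/(ab)$, normalize $\min_{A}|\xx_n^*(f)|$; (2) partition $A=A_1\sqcup A_2$ by the $b$-threshold as above, so $\osc(f,A_1)\le1/b$ and $\osc(f,A_2)\le1/a$, and note $\|S_{A_1}(f)\|\le\Phi(b)\|f\|$; (3) set $g=\frac{b_0}{b}\Ind_{\varepsilon(f),A_1}+S_{A_2}(f)$, observe $\osc$ of the coefficient vector of $g$ on $A_1\cup A_2$ is $\le1/a$, hence $\|g\|\le\Phi(a)\|g+f-\Ind_{\varepsilon(f),A_1}\tfrac{b_0}{b}-S_{A_2}(f)\|$ --- here I use that the ``ambient'' element $f-S_{A_1}(f)$ on coordinates outside $A_1$ agrees with what we want; being precise about which element plays the role of ``$f$'' in the definition of $(1/a,1/a)$-bounded-oscillation unconditionality is the crux; (4) use Lemma~\ref{lem:pconvex} to bound $\|\frac{b_0}{b}\Ind_{\varepsilon(f),A_1}-S_{A_1}(f)\|\le A_p(1-b)\frac{b_0}{b}\|\Ind_{\varepsilon(f),A_1}\|$ --- wait, the stated inequality has no $A_p$, so the paper must be using the trivial $p$-triangle bound on a one-sided perturbation, i.e. $|\frac{b_0}{b}-|\xx_n^*(f)||/(b_0/b)\in[0,1-b]$ and summing coordinatewise differences; I'd check whether a coordinatewise monotone rescaling argument avoids $A_p$; (5) bound $\frac{b_0}{b}\|\Ind_{\varepsilon(f),A_1}\|\le\Phi(b)\|f\|$ (again via bounded-oscillation unconditionality applied to a sign/magnitude-normalized version of $S_{A_1}(f)$, whose oscillation is $1\le1/b$); (6) assemble: $\|S_A(f)\|^p\le\|S_{A_1}(f)\|^p+\|S_{A_2}(f)\|^p$ is too lossy, so instead bound $\|S_A(f)\|\le\|S_{A_1}(f)\|^p+\|g\|^p+\|\frac{b_0}{b}\Ind_{\varepsilon(f),A_1}-S_{A_1}(f)\|^p$ --- no: $S_A(f)=S_{A_1}(f)+S_{A_2}(f)$ and $S_{A_2}(f)=g-\frac{b_0}{b}\Ind_{\varepsilon(f),A_1}$, so $S_A(f)=g-(\frac{b_0}{b}\Ind_{\varepsilon(f),A_1}-S_{A_1}(f))$, giving $\|S_A(f)\|^p\le\|g\|^p+\|\frac{b_0}{b}\Ind_{\varepsilon(f),A_1}-S_{A_1}(f)\|^p\le\Phi^p(a)\|\cdot\|^p+(1-b)^p\Phi^p(b)\|f\|^p$, and the ambient norm inside the $\Phi(a)$ factor is $\le(\|f\|^p+\|\frac{b_0}{b}\Ind_{\varepsilon(f),A_1}-S_{A_1}(f)\|^p)^{1/p}\le(\|f\|^p+(1-b)^p\Phi^p(b)\|f\|^p)^{1/p}$, which yields precisely \eqref{eq:Berarategui}. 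The main obstacle is step (3)/(6): being scrupulous about what the ``reference'' element is when invoking $(1/a,1/a)$-bounded-oscillation unconditionality for $g$, and verifying that the discrepancy between $g$ and the genuine truncation has $p$-norm controlled by $(1-b)\Phi(b)\|f\|$ with the right constant and no stray $A_p$; everything else is bookkeeping with the $p$-triangle inequality.
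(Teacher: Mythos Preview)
Your overall strategy is right and matches the paper's: modify $f$ so that the oscillation on $A$ drops from $1/(ab)$ to $1/a$, apply $\Phi(a)$ to the modified vector, and control the perturbation by $(1-b)\Phi(b)\|f\|$. The gap is in the specific perturbation you choose. Flattening the bottom block $A_1$ to the constant value $b_0/b$ produces the error term
\[
E=\norm{\tfrac{b_0}{b}\,\Ind_{\varepsilon(f),A_1}-S_{A_1}(f)},
\]
and your steps (4)--(5) do not deliver $E\le(1-b)\Phi(b)\|f\|$. Lemma~\ref{lem:pconvex} inevitably inserts the factor $A_p$, and the bound $\tfrac{b_0}{b}\|\Ind_{\varepsilon(f),A_1}\|\le\Phi(b)\|f\|$ you need in (5) is not available from bounded-oscillation unconditionality: to apply $\Phi(b)$ to the constant-coefficient vector you must compare against $\|h\|$ for some $h$ that equals $\tfrac{b_0}{b}\Ind_{\varepsilon(f),A_1}$ on $A_1$, and any such $h$ built from $f$ differs from $f$ by exactly the error vector whose norm is $E$ --- so the argument is circular. (Coordinatewise, $\tfrac{b_0}{b}\Ind_{\varepsilon(f),A_1}$ dominates $S_{A_1}(f)$, and no lattice-type inequality is at your disposal.)

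The paper sidesteps this by scaling the \emph{top} coefficients multiplicatively rather than flattening the bottom ones. With the normalization $\|S_A(f)\|_\infty=1$ it sets $D=A(f,b,1)$ and $g=f-(1-b)S_D(f)$, which multiplies every coefficient in $D$ by $b$; one then checks $\osc(g,A)\le 1/a$. The crucial gain is that the perturbation is now the scalar multiple $(1-b)S_D(f)$ of a bounded-oscillation projection of $f$ itself, so
\[
\norm{(1-b)S_D(f)}\le (1-b)\Phi(b)\|f\|
\]
immediately, since $\osc(f,D)\le 1/b$ by the very definition of $D$. No $A_p$, no indicator vectors, no circularity. The assembly is then two uses of the $p$-triangle inequality,
\[
\|S_A(f)\|^p\le\|S_A(g)\|^p+(1-b)^p\|S_{B}(f)\|^p,\qquad \|g\|^p\le\|f\|^p+(1-b)^p\|S_D(f)\|^p,
\]
with $B=A\cap D$, together with $\|S_A(g)\|\le\Phi(a)\|g\|$ and $\|S_D(f)\|,\|S_B(f)\|\le\Phi(b)\|f\|$.
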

\begin{proof}
Fix $f\in\XX$ and $A\subset \supp(f)$ with $\osc(f,A)\le (ab)^{-1}$. By rescaling we may assume $\norm{S_A(f)}{\infty}=1$. Let
\[
g:=f-(1-b)S_{D}(f),
\]
where $D=A(f,b,1)$. We have
\begin{equation}\label{eq:anso1}
\norm{ g}^p\le
\norm{f}^p+(1-b)^p \norm{S_{D}(f)}^p
\le ( 1+ (1-b)^p \Phi^{\, p} (b) ) \norm{f}^p.
\end{equation}
Now set $B:=A\cap D$. Since $ S_A(g)=S_A(f) -(1-b)S_B(f)$,
\begin{equation}\label{eq:anso2}
\norm{S_A(f)}^p \le \norm{S_A(g)}^p + (1-b)^p \norm{S_B(f)}^p.
\end{equation}

Since $\osc(g,A)\le a^{-1}$ and $\osc(f,B)\le b^{-1}$,
\begin{equation}\label{eq:anso3}
\norm{S_A(g)} \le \Phi(a) \norm{g} \quad \mbox{ and }\quad \norm{S_B(f)} \le \Phi(b) \norm{f}.
\end{equation}

Combining \eqref{eq:anso1}, \eqref{eq:anso2} and \eqref{eq:anso3} gives the desired inequality.
\end{proof}

\begin{corollary}\label{corollary pLipschitz}
Let $\XB$ be a bounded-oscillation unconditional basis of a $p$-Banach space $\XX$, $0<p\le 1$. Then

\begin{enumerate}[label=(\roman*), leftmargin=*, widest=iii]
\item \label{cont} $\Phi$ is continuous on the interval $(0,1]$.
\item \label{plip} $\Phi$ is $p$-Lipschitz on the interval $[c,d]$ for every $0<c<d<1$.
\item \label{lip} If $p=1$, for each $0<c<1$ $\Phi$ is Lipschitz on the interval $[c,1]$ with Lipschitz constant
\[
\Lip(\Phi,[c,1])\le \frac{\Phi(1) (1+\Phi(c))}{c}.
\]
\end{enumerate}
\end{corollary}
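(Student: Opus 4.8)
The plan is to read all three items off the single inequality~\eqref{eq:Berarategui}, using throughout that $\Phi$ is non-increasing, with one exception: the continuity at the endpoint $t=1$, which \eqref{eq:Berarategui} cannot see (it only ever bounds $\Phi$ at a point by its values at \emph{larger} arguments), and which I would instead extract from the proof of Theorem~\ref{thm:main}. For \ref{cont} at an interior point $x\in(0,1)$ the idea is to squeeze the one-sided limits of $\Phi$ at $x$ between $\Phi(x)$ and themselves. For the limit from the left: for $x^{2}<y<x$ write $y=x\cdot(y/x)$ and apply \eqref{eq:Berarategui} to the pair $(a,b)=(x,y/x)$ — the side condition $a\le b$ is exactly $y\ge x^{2}$, and $b\le 1$ is $y\le x$ — to get
\[
\Phi(y)^{p}\le(1-y/x)^{p}\Phi(y/x)^{p}+\Phi(x)^{p}\bigl(1+(1-y/x)^{p}\Phi(y/x)^{p}\bigr).
\]
As $y\uparrow x$ the terms carrying the factor $(1-y/x)^{p}$ vanish (because $\Phi(y/x)\le\Phi(x)$ stays bounded), so $\limsup_{y\uparrow x}\Phi(y)^{p}\le\Phi(x)^{p}$; monotonicity of $\Phi$ then forces $\lim_{y\uparrow x}\Phi(y)=\Phi(x)$. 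The limit from the right is symmetric: for $x<y<\sqrt{x}$ write $x=y\cdot(x/y)$, apply \eqref{eq:Berarategui} to $(a,b)=(y,x/y)$ (now $a\le b$ reads $y^{2}\le x$), and let $y\downarrow x$ to get $\Phi(x)^{p}\le\liminf_{y\downarrow x}\Phi(y)^{p}$, hence continuity from the right.

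Left-continuity at $t=1$ is the delicate point, since \eqref{eq:Berarategui} is useless there. Here I would revisit the step in the proof of Theorem~\ref{thm:main} that upgrades $(1,1)$-bounded-oscillation unconditionality to $(1/t,1/t)$-bounded-oscillation unconditionality: with $C=\Phi(1)$, that argument shows that for $1-1/(A_{p}C)<t<1$ and any $f$, $A$ with $\osc(f,A)\le1/t$ normalized so that $\norm{S_{A}(f)}_{\infty}=1$, the maximizing set $B\subseteq A$ satisfies $\norm{\Ind_{\varepsilon(f),B}}\le C\bigl(1-A_{p}^{p}C^{p}(1-t)^{p}\bigr)^{-1/p}\norm{f}$. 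Replacing the crude bound $\norm{S_{A}(f)}\le A_{p}\norm{\Ind_{\varepsilon(f),B}}$ used there by the sharper one obtained from the splitting $S_{A}(f)=t\,\Ind_{\varepsilon(f),A}+R$, whose remainder $R$ has all coefficients of modulus at most $1-t$ and hence (by Lemma~\ref{lem:pconvex}) satisfies $\norm{R}\le A_{p}(1-t)\norm{\Ind_{\varepsilon(f),B}}$, one gets $\norm{S_{A}(f)}\le\bigl(t^{p}+A_{p}^{p}(1-t)^{p}\bigr)^{1/p}\norm{\Ind_{\varepsilon(f),B}}$, and therefore
\[
\Phi(t)\le\bigl(t^{p}+A_{p}^{p}(1-t)^{p}\bigr)^{1/p}\,C\,\bigl(1-A_{p}^{p}C^{p}(1-t)^{p}\bigr)^{-1/p},\qquad 1-\frac{1}{A_{p}C}<t<1 .
\]
The right-hand side tends to $C=\Phi(1)$ as $t\uparrow1$; combined with $\Phi(t)\ge\Phi(1)$ for $t<1$ this yields $\lim_{t\uparrow1}\Phi(t)=\Phi(1)$ and finishes \ref{cont}.

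For \ref{plip}, fix $0<c<d<1$ and $c\le s\le t\le d$. If $s\ge t^{2}$, apply \eqref{eq:Berarategui} to $(a,b)=(t,s/t)$ (legitimate because $t\le s/t$ and $s/t\le 1$) and estimate $1-s/t\le(t-s)/c$ together with $\Phi(s/t),\Phi(t)\le\Phi(c)$ to obtain $\Phi(s)^{p}-\Phi(t)^{p}\le c^{-p}\Phi(c)^{p}(1+\Phi(c)^{p})(t-s)^{p}$; if instead $s<t^{2}$, then $t-s>t(1-t)\ge c(1-d)>0$ is bounded below, so the same kind of bound holds trivially. Hence $\Phi^{p}$, and therefore $\Phi$ itself (since $1\le\Phi\le\Phi(c)$ on $[c,d]$ and $x\mapsto x^{1/p}$ is Lipschitz on $[1,\Phi(c)^{p}]$), is $p$-Lipschitz, i.e.\ Hölder of order $p$, on $[c,d]$.

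For \ref{lip}, take $p=1$, fix $0<c<1$ and $c\le s\le t<1$, and for $n\in\NN$ set $t_{j}=s^{1-j/n}t^{j/n}$ for $0\le j\le n$. For $n$ large the pair $(a,b)=(t_{j},(s/t)^{1/n})$ is admissible in \eqref{eq:Berarategui} for every $j$ (note $t_{n}=t\le(s/t)^{1/n}$, and the $t_{j}$ are increasing) and has product $t_{j-1}$, so with $p=1$ it gives $\Phi(t_{j-1})-\Phi(t_{j})\le\bigl(1-(s/t)^{1/n}\bigr)\Phi\bigl((s/t)^{1/n}\bigr)\bigl(1+\Phi(t_{j})\bigr)$. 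Summing over $j$ and using $t_{j}\ge s\ge c$ gives
\[
\Phi(s)-\Phi(t)\le n\bigl(1-(s/t)^{1/n}\bigr)\,\Phi\bigl((s/t)^{1/n}\bigr)\,(1+\Phi(c)),
\]
and letting $n\to\infty$ — whereupon $n(1-(s/t)^{1/n})\to\log(t/s)$ and $\Phi((s/t)^{1/n})\to\Phi(1)$ by \ref{cont} — yields $\Phi(s)-\Phi(t)\le\log(t/s)\,\Phi(1)(1+\Phi(c))\le\frac{t-s}{c}\Phi(1)(1+\Phi(c))$, the last inequality by $\log(t/s)\le(t-s)/s\le(t-s)/c$; the case $t=1$ then follows by letting $t\uparrow1$ and invoking \ref{cont}. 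Since $\Phi$ is non-increasing this gives $\abs{\Phi(s)-\Phi(t)}\le\frac{\Phi(1)(1+\Phi(c))}{c}\abs{s-t}$ on $[c,1]$. I expect the left-continuity at $t=1$ to be the only genuinely non-routine step, precisely because \eqref{eq:Berarategui} is blind to the endpoint and one must reach back inside the proof of Theorem~\ref{thm:main} and notice that the loss in passing from $\norm{S_{A}(f)}$ to $\norm{\Ind_{\varepsilon(f),B}}$ becomes negligible as $t\to1$; once \ref{cont} is available, \ref{plip} and \ref{lip} are just a matter of choosing the right multiplicative decomposition and checking the constraint $a\le b$.
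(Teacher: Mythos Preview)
Your proposal is correct and follows essentially the same route as the paper. The paper defers the details of \ref{plip} and \ref{lip} to a citation, noting only that \eqref{eq:Berarategui} yields \ref{plip} and that \eqref{eq:Berarategui} together with \ref{cont} yields \ref{lip}; your multiplicative decompositions and the geometric subdivision $t_j=s^{1-j/n}t^{j/n}$ fill in exactly those details. For the one genuinely non-routine point---left-continuity at $t=1$---you correctly identify that \eqref{eq:Berarategui} is blind to the endpoint and go back into the proof of Theorem~\ref{thm:main}, which is precisely what the paper does; your splitting $S_A(f)=t\,\Ind_{\varepsilon(f),A}+R$ differs only cosmetically from the paper's $S_A(f)=\Ind_{\varepsilon(f),A}+(S_A(f)-\Ind_{\varepsilon(f),A})$, and both give a bound $\Phi(t)\le C(t)\Phi(1)$ with $C(t)\to 1$.
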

\begin{proof}
It is known that \eqref{eq:Berarategui} yields \ref{plip}, whereas \eqref{eq:Berarategui} and \ref{cont} yield \ref{lip} (see \cite{AAB2023}*{Proofs of Proposition 2.8 and Corollary 2.9}). Thus, we only need to prove that $\Phi$ is left-continuous at $1$. To that end, fix $0<t<1$, $f\in \XX\setminus\{0\}$ and $A\subset \supp(f)$ so that $\osc(f,A)\le 1/t$. By scaling, we may assume $\norm{S_A(f)}_{\infty}=1$. Pick $B\subset A$ so that
\begin{align*}
\norm{\Ind_{\varepsilon(f),D}}\le \norm{\Ind_{\varepsilon(f),B}}, \quad D\subset A.
\end{align*}
Since $\abs{ \xx_n^*(f)}\ge t$ for all $n\in A$, by Lemma~\ref{lem:pconvex},
\begin{align*}
\norm{S_A(f)}^p
&\le \norm{\Ind_{\varepsilon(f),A}}^p+\norm{\Ind_{\varepsilon(f),A}-S_A(f)}^p\\
&\le \norm{\Ind_{\varepsilon(f),B}}^p+A_p^p (1-t)^p\norm{\Ind_{\varepsilon(f),B}}^p \\
&=(1+A_p^p (1-t)^p) \norm{\Ind_{\varepsilon(f),B}}^p.
\end{align*}
On the other hand,
\begin{align*}
\norm{\Ind_{\varepsilon(f),B}}^p &\le \Phi^p(1)\norm{\Ind_{\varepsilon(f),B}-S_B(f)+f}^p\\
&\le \Phi^p(1) \left( \norm{\Ind_{\varepsilon(f),B}-S_B(f)}^{p} +\norm{f}^p\right)\\
&\le \Phi^p(1)\norm{f}^p+\Phi^p(1)A_p^p\Miguel{(1-t)}^p\norm{\Ind_{\varepsilon(f),B}}^p,
\end{align*}
whence
\[
\norm{\Ind_{\varepsilon(f),B}}\le \frac{\Phi(1)}{\left(1-\Phi^p(1)A_p^p(1-t)^p\right)^{{1}/{p}}}\norm{f}.
\]
Combining, we get $\norm{S_A(f)}\le C(t) \Phi(1) \norm{f}$, where
\[
C(t)= \frac{(1+A_p^p (1-t)^p)^{1/p}}{\left(1-\Phi^p(1)A_p^p\Miguel{(1-t)}^p\right)^{{1}/{p}}}.
\]
Taking the supremum over $A$ we conclude that $\Phi(t) \le C(t) \Phi(1)$. Since $\lim_{t\to 1} C(t)=1$, we are done.
\end{proof}

Our next result examines the behaviour of the function $\Phi$ when we rescale a truncation quasi-greedy basis. As we shall see, bounded-oscillation unconditionality is a very stable property under such perturbations, so that $\Phi$ only grows very slowly in a sense that will become clear below. Let us first introduce some notation we will use. Given a semi-normalized scalar sequence $\alpha=(\alpha_n)_{n=1}^\infty$ and a basis $\XB=(\xx_n)_{n=1}^\infty$ of a $p$-Banach space $\XX$, set
\[
\XB_\alpha=(\alpha_n\xx_n)_{n=1}^\infty\quad\text{and}\quad K_{\alpha}=\frac{\inf_{n\in \NN}\abs{\alpha_n}}{\sup_{n\in \NN} \abs{\alpha_n}}.
\]
To make clear that $\Phi$ is the bounded-oscillation unconditionality function of $\XB$, we write $\Phi=\Phi[\XB,\XX]$. We will use the same convention with other parameters.

\begin{proposition}\label{lemma seminormalized1}
Let $\XB$ be a basis of a $p$-Banach space $\XX$, and $\alpha$ be a semi-normalized scalar sequence. If $\XB$ is bounded-oscillation unconditional, so is $\XB_{\alpha}$, with
\[
\Phi[\XB_\alpha,\XX](t) \le \left(\Phi^p[\XB,\XX](t+\Phi^p[\XB,\XX] (K_\alpha)\right)^{1/p}
\]
for all $0<t\le 1$.
\end{proposition}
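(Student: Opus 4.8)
The plan rests on a single elementary observation: rescaling a basis does not change its coordinate projections. Writing $\XB^*=(\xx_n^*)_{n=1}^\infty$, the dual basis of $\XB_\alpha=(\alpha_n\xx_n)_{n=1}^\infty$ is $(\alpha_n^{-1}\xx_n^*)_{n=1}^\infty$ (here $\alpha$ being semi-normalized guarantees that $\XB_\alpha$ is indeed a basis), and for every $f\in\XX$ and every $A\subset_f\NN$ one has $\sum_{n\in A}(\alpha_n^{-1}\xx_n^*(f))(\alpha_n\xx_n)=\sum_{n\in A}\xx_n^*(f)\xx_n$; that is, the projection $S_A$ is the \emph{same} operator for $\XB$ and for $\XB_\alpha$. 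What \emph{does} change when we test bounded-oscillation unconditionality is which sets $A$ are admissible, and that is controlled by the oscillation. So the proof reduces to comparing the oscillation of $f$ on $A$ relative to the two bases.

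First I would record, as observed right after the definition of bounded-oscillation unconditionality, that for any basis $\YB$ and any $0<t\le 1$ the number $\Phi[\YB,\XX](t)=\beta(1/t,1/t)$ is the smallest constant $C$ with $\norm{S_A(f)}\le C\norm{f}$ whenever $f\in\XX$ and $A\subset_f\NN$ is nonempty with $\osc_{\YB}(f,A)\le 1/t$ (when $D=d$ the required block structure is trivially realized by the single block $A_1=A$). Next I would fix $0<t\le 1$, a nonzero $f\in\XX$, and a nonempty $A\subset_f\NN$ with $\osc_{\XB_\alpha}(f,A)\le 1/t$; we may assume $A\subset\supp(f)$, which is the same set for $\XB$ and $\XB_\alpha$ since every $\alpha_n\ne 0$. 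For $n,m\in A$,
\[
\frac{\abs{\xx_n^*(f)}}{\abs{\xx_m^*(f)}}=\frac{\abs{\alpha_n}}{\abs{\alpha_m}}\cdot\frac{\abs{\alpha_n^{-1}\xx_n^*(f)}}{\abs{\alpha_m^{-1}\xx_m^*(f)}}\le\frac{1}{K_\alpha}\cdot\frac{1}{t},
\]
so $\osc_{\XB}(f,A)\le 1/(tK_\alpha)$, with $tK_\alpha\le 1$. Applying the reformulation above to $\YB=\XB$ then gives $\norm{S_A(f)}\le\Phi[\XB,\XX](tK_\alpha)\norm{f}$, and taking the supremum over all admissible $f$ and $A$ yields $\Phi[\XB_\alpha,\XX](t)\le\Phi[\XB,\XX](tK_\alpha)$; in particular $\XB_\alpha$ is bounded-oscillation unconditional because this is finite for every $t\in(0,1]$.

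To get the stated bound I would finish by applying Lemma~\ref{lem:Philogconvex} with the factorization $tK_\alpha=t\cdot K_\alpha$, obtaining $\Phi[\XB,\XX](tK_\alpha)\le(\Phi^p[\XB,\XX](t)+\Phi^p[\XB,\XX](K_\alpha))^{1/p}$ whenever $t,K_\alpha\in(0,1)$; the boundary cases $t=1$ or $K_\alpha=1$ follow from the same inequality, which is then trivial since $\Phi$ is non-increasing and $\Phi^p\ge 0$. I do not foresee any genuine obstacle here: the whole argument is the projection-invariance together with the one-line oscillation estimate $\osc_{\XB}(f,A)\le K_\alpha^{-1}\osc_{\XB_\alpha}(f,A)$, and the only points that call for a bit of care are the reduction of $\Phi$ to the oscillation-only formulation and the degenerate endpoint cases in the last inequality.
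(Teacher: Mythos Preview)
Your proof is correct and follows essentially the same route as the paper: both arguments reduce to the oscillation comparison $\osc_{\XB}(f,A)\le K_\alpha^{-1}\osc_{\XB_\alpha}(f,A)$ to obtain $\Phi[\XB_\alpha,\XX](t)\le\Phi[\XB,\XX](K_\alpha t)$, and then invoke Lemma~\ref{lem:Philogconvex}. You are in fact slightly more careful in spelling out why $S_A$ is unchanged under rescaling and in handling the endpoint cases $t=1$ and $K_\alpha=1$, which the paper leaves implicit.
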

\begin{proof}
Pick $0<t \le 1$, $f\in \XX$ and $A\subset_f \NN$ with $\osc[\XB_{\alpha},\XX](f,A)\le 1/t$. Then
\[
\osc[\XB,\XX](f,A)\le \frac{1}{K_\alpha t}.
\]
We infer that $\XB_{\alpha}$ is bounded-oscillation unconditional and
\[
\Phi[\XB_\alpha,\XX](t) \le\Phi[\XB,\XX](K_\alpha t), \quad 0<t\le 1.
\]
An application Lemma~\ref{lem:Philogconvex} puts an end to the proof.
\end{proof}

\begin{remark}\label{remarkstable}
Note that Lemma~\ref{lemma seminormalized1} entails that if $\XB$ is not unconditional, then for fixed $\alpha$ we have
\begin{align*}
\lim_{t\to 0} \max\enbrace{ \frac{\Phi[\XB_{\alpha},\XX](t)}{\Phi[\XB,\XX](t)}, \frac{\Phi[\XB,\XX](t)}{\Phi[\XB_{\alpha},\XX](t)}} =1.
\end{align*}
\end{remark}

We close this section with an application of our results to the study of the stability of quasi-greediness in $p$-Banach spaces.
It is known that quasi-greedy bases are stable with respect to products for semi-normalized scalar sequences (\cite{AABW2021}*{Theorem 4.11}, \cite{KoTe1999}, \cite{DKO2015}*{Lemma 2.1} \cite{Woj2000}*{Proposition 3}). Our results yield an alternative proof that improves the known estimates for the asymptotic growth of the constants. To contextualize this result, we need some more terminology. Given a basis $\XB$ in a $p$-Banach space $\XX$ and $0<t\le 1$, a set $A\subset_f\NN$ is said to be a \emph{$t$-greedy set} of $f\in\XX$ with respect to the basis $\XB$ if $\abs{\xx_n^*(f)} \ge t\abs{\xx_k^*(f)}$ for all $n\in A$ and $k\in\NN\setminus A$. Given $0<t\le 1$, the basis $\XB$ is said to be \emph{$t$-quasi-greedy} if there is a constant $C\ge 1$ such that $\norm{S_A(f)} \le C \norm{f}$ for all $f\in\XX$ and all $t$-greedy sets $A$ of $f$.

Note that if $\alpha$ is a semi-normalized scalar sequence, then any greedy set of $f$ with respect to $\XB_{\alpha}$ is a $K_{\alpha}$-greedy set of $f$ with respect to $\XB$, and, the other way around, any $t$-greedy set of $f$ with respect to $\XB$ is a greedy set of $f$ with respect to $\XB_{\beta}$ for some semi-normalized sequence $\beta$ with $K_{\beta}= t$. Thus, the questions of whether quasi-greedy bases are $t$-quasi-greedy for all $0<t<1$ and whether the bases $\XB_{\alpha}$ above are always quasi-greedy if $\XB$ is quasi-greedy are equivalent, and so are the results for the estimates of the constants involved.

To the best of our knowledge, the known estimates for the growth of the $t$-quasi-greedy constants grow as $1/t$
as $t$ tends to zero (see \cite{AABW2021}*{Theorem 4.11}, \cite{DKO2015}*{Lemma 2.1},\cite{KoTe1999},\cite{Woj2000}*{Proposition 3}). We can improve these estimates as follows:
\begin{lemma}\label{lemmastable}
Let $\XB$ be a quasi-greedy basis with constant $C_q$, $1\le C_q<\infty$, of a $p$-Banach space $\XX$. There is a constant $C_1$ depending only on $p$ and $C_q$ such that
\[
\norm{S_A(f)}\le C_1 (1-\log t)^{1/p} \norm{f}
\]
for every $f\in \XX$, $0<t<1$ and $A \subset_f \NN$ a $t$-greedy set of $f$.
\end{lemma}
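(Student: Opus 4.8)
The plan is to realize a $t$-greedy set $A$ of $f$ as the difference $D\setminus(D\setminus A)$ of a genuine greedy set $D$ and a set $D\setminus A$ of oscillation at most $1/t$, and then to estimate the two pieces separately: the greedy part by quasi-greediness, and the small-oscillation part by the logarithmic bound for $\Phi$ from Corollary~\ref{corollaryPhilog}.

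First I would make the usual reductions. Replacing $A$ by $A\cap\supp(f)$ affects neither $S_A(f)$ nor the fact that $A$ is a $t$-greedy set of $f$, so we may assume $A\subset\supp(f)$; and if $A=\emptyset$ there is nothing to prove. Thus $b:=\min_{n\in A}\abs{\xx_n^*(f)}>0$, and the set $D:=\enbrace{k\in\NN\colon\abs{\xx_k^*(f)}\ge b}$ is finite by the remarks in the introduction. Plainly $D$ is a greedy set of $f$ and $A\subset D$; moreover, for $k\in D\setminus A$ the $t$-greedy condition applied to any $n\in A$ gives $b\ge t\abs{\xx_k^*(f)}$, so $b\le\abs{\xx_k^*(f)}\le b/t$, i.e. $D\setminus A\subset A(f,b,b/t)$.

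Now $S_A(f)=S_D(f)-S_{D\setminus A}(f)$, so $p$-convexity of the quasi-norm yields $\norm{S_A(f)}^p\le\norm{S_D(f)}^p+\norm{S_{D\setminus A}(f)}^p$. Since $D$ is a greedy set of $f$, quasi-greediness gives $\norm{S_D(f)}\le C_q\norm{f}$. For the other term, by Theorems~\ref{thm:QGvsTQG} and~\ref{thm:main} the basis is bounded-oscillation unconditional, and, taking $s:=b/t$, we have $D\setminus A\subset A(f,s/d,s)$ with $d=1/t$; hence the characterization of $(1/t,1/t)$-bounded-oscillation unconditionality recorded in the introduction gives $\norm{S_{D\setminus A}(f)}\le\beta(1/t,1/t)\norm{f}=\Phi(t)\norm{f}$.

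It remains to invoke the quantitative estimates. By Remark~\ref{remark: QG->TQG}, $\Phi(1)\le 2^{1/p}C_q$; and the constant $C$ produced by Corollary~\ref{corollaryPhilog}, being extracted from \eqref{eq:BOU}, may be taken non-decreasing in $\Phi(1)$, so there is $C=C(p,C_q)$ with $\Phi(t)\le C(1-\log t)^{1/p}$ for $0<t\le 1$. Combining the displays above and using $1-\log t\ge 1$ on $(0,1)$ gives $\norm{S_A(f)}^p\le(C_q^p+C^p)(1-\log t)\norm{f}^p$, i.e. the claim with $C_1=(C_q^p+C^p)^{1/p}$. I do not expect a real obstacle here: the argument is essentially bookkeeping once the decomposition $S_A(f)=S_D(f)-S_{D\setminus A}(f)$ is found, and the only points needing care are the scaling $s=b/t$, which places $D\setminus A$ exactly in one of the admissible sets $A(f,s/d,s)$ with $d=1/t$, and tracking that every constant appearing depends only on $p$ and $C_q$.
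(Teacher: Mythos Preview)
Your proof is correct and follows essentially the same route as the paper's: both split $S_A(f)$ into a projection onto a genuine greedy set and a projection onto a set of oscillation at most $1/t$, then invoke Remark~\ref{remark: QG->TQG} and Corollary~\ref{corollaryPhilog}. The only cosmetic difference is that the paper picks a greedy set $B$ with $\abs{B}=\abs{A}$ and works with the symmetric difference $(A\setminus B)\cup(B\setminus A)$, whereas you take the larger greedy set $D=\{k:\abs{\xx_k^*(f)}\ge b\}\supset A$ and work with $D\setminus A$; either choice yields the same final estimate.
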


\begin{proof}
By Remark~\ref{remark: QG->TQG}, $\XB$ is bounded-oscillation unconditional with $\Phi(1)\le 2^{1/p}C_q$. Let $C$ be the constant of Corollary~\ref{corollaryPhilog}, which only depends only on $p$ and $C_q$. Given $0<t<1$, $f\in \XX$ and $A\subset_f \NN$ a $t$-greedy set of $f$, let $B$ be a greedy set of $f$ with $\abs{B}=\abs{A}$. Since, in the case when $B\not=A$,
\[
\min_{n\in B\setminus A}\abs{\xx_n^*(f)}\ge\max_{n\in A\setminus B}\abs{\xx_n^*(f)} \ge \min_{n\in A\setminus B}\abs{\xx_n^*(f)} \ge t \max_{n\in B\setminus A}\abs{\xx_n^*(f)},
\]
it follows that $\osc{(f,(A\setminus B)\cup (B\setminus A))}\le 1/t$. Hence,
\begin{align*}
\norm{S_A(f)}^p&\le \norm{S_B(f)}^p+\norm{S_{(A\setminus B)\cup (B\setminus A)}f)}^p\\
&\le (C_q^p+C^p(1-\log t))\norm{f}^p. \qedhere
\end{align*}
\end{proof}

\section{Nearly unconditional bases that are not bounded-oscillation unconditional}\label{sect:EltonNotDilworth}\noindent
The goal of this section is to prove Theorem~\ref{thm:example}. We will do so with the help of some weakened forms of unconditionality that arise naturally in the study of the thresholding greedy algorithm. Recall that a basis $\XB=(\xx_n)_{n=1}^\infty$ is unconditional if and only if there is a constant $C\in[1,\infty)$ such that
\begin{equation}\label{eq:LU}
\norm{\sum_{n\in A} b_n\, \xx_n} \le C \norm{\sum_{n\in A} a_n\, \xx_n}
\end{equation}
for all $A\subset_f\NN$ and all families $(a_n)_{n\in A}$ and $(b_n)_{n\in A}$ with $\abs{b_n}\le \abs{a_n}$ for all $n\in A$. If this inequality holds for a given $C$, we say that $\XB$ is \emph{lattice unconditional} with constant $C$. The basis $\XB$ is is said to be \emph{lattice partially unconditional} (LPU for short) with constant $C$ if
\eqref{eq:LU} holds under the stronger assumption that
\[
\sup_{n\in A} \abs{b_n} \le \inf_{n\in A} \abs{a_n}.
\]
By convexity, $\XB$ is LPU with constant $C$ if and only if \eqref{eq:LU} holds in the case when
\[
\abs{b_n}=1\le \abs{a_n}, \quad n\in A.
\]
We call a basis $\XB$ \emph{lower unconditional for constant coefficients} (LUCC for short)
with constant $C$ if \eqref{eq:LU} is verified when
\[
\abs{b_n}=1\quad\text{and}\quad 1\le a_n/b_n<\infty,\quad n\in A.
\]
Finally, if \eqref{eq:LU} holds when $\abs{a_n}=1$ and either $b_n=a_n$ or $b_n=0$ for all $n\in A$, we say that $\XB$ is e \emph{unconditional for constant coefficients} (UCC for short).

A basis is LPU if and only if it is UCC and LUCC. Quantitatively, if $\ucc$, $\lucc$ and $\lpu$ are the optimal constants for unconditionality for constant coefficients, lower unconditionality for constant coefficients and lattice partial unconditionality, respectively,
\[
\max\{ \ucc,\lucc\} \le \lpu, \quad \lpu\le \Upsilon A_p \ucc \lucc,
\]
where $\Upsilon=2$ if $\FF=\RR$ and $\Upsilon=4$ if $\FF=\CC$ (see \cite{AABW2021}*{Section 3}). In turn, quasi-greedy bases for largest coefficients are UCC with $\ucc\le \qglc$, and being super-democratic is equivalent to being democratic and UCC.

Recall that a \emph{Schauder basis} of a quasi-Banach space $\XX$ is a sequence $\XB=(\xx_n)_{n=1}^\infty$ such that every $f\in\XX$ has a unique series expansion $f=\sum_{n=1}^\infty a_n\, \xx_n$, where the convergence of the series is in the topology induced by the quasi-norm. It is known that $\XB$ is a Schauder basis if and only if it spans the whole space $\XX$, there is a sequence $(\xx_n^*)_{n=1}^\infty$ in $\XX^{\ast}$ biorthogonal to $\XB$, and the partial sum projections associated to the basis, given by
\[
S_m=S_{\NN^{\le m}}, \quad \text{where}\;\;\NN^{\le m}=\{1,\dots,m\},
\]
are uniformly bounded on $m\in\NN$. Hence, any semi-normalized Schauder basis of $\XX$ is a basis in the sense we set up at the beginning. The number
\[
\sch=\sup_m \norm{S_m}
\]
is the \emph{basis constant} and if $\sch=1$ we say that $\XB$ is \emph{monotone}. If a Schauder basis satisfies the stronger condition $ \norm{S_m-S_k}\le 1$ for all $1\le k \le m<\infty$, we say that it is \emph{bimonotone}.

The proof of Theorem~\ref{thm:example} will be a ready consequence of Theorem~\ref{thm:Miguel} below. To understand the setting of the proof, the reader should be aware that it makes sense to consider bases indexed over a countable (finite or infinite) set, and all unconditionality-like properties we have introduced can be studied with this convention in mind. Also, we can consider Schauder bases indexed over a countable set endowed with an order which is isomorphic to $\NN$ or to $\NN^{\le m}$ for some $m\in\NN$. Needless to say, the interest in the study of finite bases lies in obtaining estimates for the constants involved.

Suppose that for $M\in\NN$ we let $\XB_M= (\xx_{M,n})_{n\in\Nt_M}$ be a finite family of vectors in a quasi-Banach space $\XX_M$. Set
\[
\Nt=\bigcup_{M=1}^\infty \{M\}\times \Nt_M
\]
and denote by $J_M$ the canonical embedding of $\XX_M$ into $\prod_{M=1}^\infty \XX_M$. The direct sum of $(\XB_M)_{M=1}^\infty$ is the family
\[
\XB:=\oplus_{M=1}^\infty \XB_M=(\yy_\nu)_{\nu\in\Nt}
\]
defined by $\yy_{M,n}=J_M(\xx_{M,n})$ for all $(M,n)\in\Nt$. If $\XB_M$ is a monotone normalized Schauder basis of $\XX_M$ for each $M\in\NN$ and we consider the lexicographical order on $\Nt$, we see that $\XB$ is a monotone normalized Schauder basis of $(\oplus_{M=1}^\infty \XX_M)_p$ for all $1\le p<\infty$. As far as the unconditionality-like conditions we are studying is concerned, $\XB$ is truncation quasi-greedy (or QGLC, or LPU, or LUCC, or UCC) with constant $C$ if and only if each summand is. As for the democracy of $\XB$, we point out that it
inherits the estimates
\begin{equation}\label{eq:FFlpBis}
\frac{1}{C} \abs{A}^{1/p} \le \norm{\Ind_{\varepsilon,A}}[\XB_M,\XX_M]\le C \abs{A}^{1/p}, \quad A\subset \Nt_M,
\end{equation}
from its summands, where $C$ is a constant that does not depend on $M$.

Given $p\in[1,\infty]$, let $p'$ denote its conjugate exponent defined by $1/p+1/p'=1$.

Given two subsets $A$ and $B$ of $\RR$, the symbol $A<B$ means that $a<b$ for all $a\in A$ and $b\in B$.

\begin{theorem}\label{thm:Miguel}
Given $1\le p<\infty$, there is Banach space $\XX$ with a normalized Schauder basis $\XB$ such that
\begin{enumerate}[label=(\roman*), leftmargin=*, widest=iii]
\item $\XB$ is QGLC and LUCC, hence LPU,
\item $\XB$ is democratic with $\udf(m)\approx m^{1/p}$ for $m\in\NN$, and
\item $\XB$ is not truncation quasi-greedy.
\end{enumerate}
\end{theorem}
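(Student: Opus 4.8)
The plan is to build $\XX$ as an $\ell_p$-sum $\bigl(\oplus_{M=1}^\infty \XX_M\bigr)_p$ of finite-dimensional blocks $\XX_M$, each carrying a finite normalized monotone Schauder basis $\XB_M$, and to arrange that the combinatorial obstruction to truncation quasi-greediness accumulates across the blocks while the ``good'' properties (QGLC, LUCC, hence LPU, plus democracy with fundamental function $\approx m^{1/p}$) hold uniformly in $M$ with constants independent of $M$. By the direct-sum stability remarks preceding the statement, it suffices to produce the blocks $\XB_M$ so that: (a) each is QGLC with a constant $C$ independent of $M$; (b) each is LUCC with a constant $C$ independent of $M$; (c) each satisfies the two-sided estimate \eqref{eq:FFlpBis} with a constant $C$ independent of $M$; and (d) the truncation quasi-greedy constant of $\XB_M$ tends to infinity with $M$. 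Property (d) on the summands forces $\XB$ itself to fail to be truncation quasi-greedy, while (a), (b), (c) transfer verbatim to $\XB$.

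The concrete model I would use for $\XX_M$ is a finite-dimensional space whose norm is defined by a sup over a carefully chosen collection of functionals, in the spirit of the classical constructions separating weak-$\ell_1$-type domination from quasi-greediness (e.g.\ \cite{DKK2003}*{Example 4.8}). Take $\Nt_M$ to be a set of size roughly $2M$, split into a ``small-coefficient'' part $G_M$ of size $M$ and a ``large-coefficient'' part $H_M$ of size $M$, and put on $\XX_M = \FF^{\Nt_M}$ the norm
\[
\norm{x}_M = \max\enbrace{ \norm{x}_{\ell_p}, \ \max_{1\le k\le M} \frac{1}{M^{1-1/p}}\,\abs*{\sum_{n\in I_k} x_n} },
\]
where $(I_k)_{k=1}^{M}$ is a nested or staircase-like family of intervals in $\Nt_M$ that pair elements of $H_M$ with elements of $G_M$ so that applying a coordinate projection to $x$ can increase one of the averaging functionals. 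The $\ell_p$ term guarantees the lower bound in \eqref{eq:FFlpBis}, while the normalization $M^{-(1-1/p)}$ in front of the averaging functionals guarantees both that the vectors $\xx_{M,n}$ stay normalized and that $\norm{\Ind_{\varepsilon,A}}_M \lesssim |A|^{1/p}$ (since an averaging functional over $I_k$ applied to $\Ind_{\varepsilon,A}$ is at most $|A|/M^{1-1/p}\le |A|^{1/p}$ when $|A|\le M$, and the general case reduces to $|A|\le M$ by the $\ell_p$ term). For QGLC and LUCC one checks that when the coefficients of $f$ on $A$ all have the same modulus (QGLC: after rescaling, modulus $1$ with everything off $A$ of modulus $\le 1$; LUCC: all of modulus $1$, the rest of modulus $\ge 1$), passing from $f$ to $S_A(f)$ cannot increase any averaging functional by more than a bounded factor, because the lost coordinates had modulus $\le 1$ and there are at most $|I_k|\le $ (const) of them in each $I_k$ — this is where the intervals $I_k$ must be chosen to have bounded length, so the damage per functional is $O(1/M^{1-1/p})=o(1)$; the $\ell_p$ part is trivially $1$-unconditional. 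Finally, for (d) one exhibits, for each $M$, a vector $f_M$ (large coefficients $\approx 1+\delta$ on $H_M$, small coefficients $\approx 1$ on $G_M$, chosen so the greedy set of size $M$ is exactly $H_M$) for which $S_{H_M}(f_M)$ activates all $M$ averaging functionals simultaneously at near-full strength while $\norm{f_M}_M$ stays $O(M^{1/p})$, giving $\norm{S_{H_M}(f_M)}_M / \norm{f_M}_M \gtrsim M^{?}\to\infty$; the exponent is arranged by the staircase geometry of the $I_k$'s.

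The main obstacle will be reconciling (d) with (a)+(b): the same averaging functionals that must blow up under the thresholding (greedy) projection $S_{H_M}$ must remain controlled under the projections allowed in QGLC and LUCC. The resolution — and the crux of the construction — is that in QGLC/LUCC the removed coordinates are those of \emph{smallest} modulus (and of modulus comparable to what is kept), so removing them from an averaging functional $\sum_{n\in I_k} x_n$ changes it by an amount bounded by (number of removed coordinates in $I_k$) $\times$ (their modulus) $=O(1)\cdot O(1)$, i.e.\ $O(1/M^{1-1/p})$ per functional after normalization, hence negligible; whereas in the truncation-quasi-greedy test one keeps the \emph{largest} coordinates $H_M$ and it is the synchronized contribution of those kept coordinates across \emph{all} $M$ intervals that produces the unbounded norm. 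Making this dichotomy work quantitatively — choosing the sizes $|I_k|$, the gap $\delta$ between large and small coefficients, and the cardinalities of $G_M,H_M$ so that all four requirements hold simultaneously with constants independent of $M$ in (a),(b),(c) and divergence in (d) — is the heart of the argument; once the numerology is pinned down, verifying QGLC, LUCC and the democracy estimates is a routine, if somewhat lengthy, case analysis, and assembling the blocks via the $\ell_p$-sum and invoking the stability remarks completes the proof.
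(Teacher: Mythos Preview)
Your reduction to finite-dimensional blocks assembled via an $\ell_p$-sum is exactly the right scaffolding and matches the paper. The specific block you propose, however, cannot do the job. With the normalization $c_M=M^{-(1-1/p)}$ and intervals $I_k$ of bounded length $L$, each averaging functional applied to any vector with $\|\cdot\|_\infty$ bounded is at most $L\,M^{-(1-1/p)}$, which tends to $0$; so the norm on $\XX_M$ is, up to a constant, just the $\ell_p$-norm, and the basis is \emph{unconditional} (in particular truncation quasi-greedy). Moreover, your norm is a \emph{maximum} over the functionals, so ``activating all $M$ functionals simultaneously'' buys nothing; and the quantity that must blow up for (d) is $\min_{n\in A}\abs{\xx_n^*(f)}\,\norm{\Ind_{\varepsilon(f),A}}$, not $\norm{S_A(f)}$. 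With your test vector (all coefficients of modulus $\approx 1$) the democracy estimate you are simultaneously enforcing gives $\norm{\Ind_{\varepsilon,H_M}}\approx M^{1/p}$, while $\norm{f_M}\ge\norm{f_M}_p\gtrsim M^{1/p}$, so the ratio stays bounded. The tension you identify is real, but it is not resolved by short intervals; it is resolved by \emph{cancellation}.

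The paper's construction builds that cancellation into the norm. There are three tiers of coordinates $A$, $I=\bigcup I_n$, $B=\bigcup B_n$, an auxiliary (non-$\ell_p$) space $\YY$ controlling $S_A$ and $S_I$, and a seminorm $\norm{\cdot}_{\triangleleft}$ defined as a supremum over a family $\mathcal{A}$ of coefficient sequences $\Delta$ that are \emph{constrained} to satisfy $M_2 c_n\delta_n+\sum_{k\in I_n}\delta_k c_k+\sum_{k\in B_n}\delta_k=0$ for each $n\in A$, together with decay conditions of the form $\abs{\delta_k}\le k^{-1/p'}$ on $I$ and rearrangement-type bounds on $B$. The constraint forces $\norm{f_0}_{\triangleleft}$ to be small for the specific test vector $f_0=\Ind_B+\sum_{n\in A}M_2 c_n\,\xx_n+\sum_{k\in I}c_k\,\xx_k$, while $\norm{\Ind_B}_{\triangleleft}$ can be made huge by choosing $\Delta$ that loads up on the $B_n$'s (the $A$- and $I$-contributions are now absent, so the constraint no longer cancels anything). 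This is what breaks truncation quasi-greediness, via the chain $\norm{\Ind_B}\le\tqg\norm{\Ind_{A\cup B}}\le\tqg^2\norm{f_0}$. The QGLC and LPU bounds, and the two-sided democracy estimate \eqref{eq:FFlpBis}, then require a genuinely delicate case analysis exploiting the decay conditions in $\mathcal{A}$ and the properties of $\YY$ (it must be dominated by, but not equivalent to, $\ell_p$, with fundamental function $\approx m^{1/p}$); none of this is visible from a two-tier ``staircase of short intervals'' model. Your plan is missing precisely this cancellation mechanism and the three-tier architecture that supports it.
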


\begin{proof}
It suffices to prove that there is a positive constant $C=C(p)$ such that, for every $M\in \NN$, there is a set $ E_M\subset_f \NN$ and a finite-dimensional normed space $\XX_M$ with a monotone normalized Schauder basis $\XB_M=(\xx_{M,n})_{n\in E_M}$ satisfying \eqref{eq:FFlpBis} whose truncation quasi-greedy constant, quasi-greedy for largest coefficients constant, and partial unconditionality constant satisfy $\tqg> M$ and $\max\{\qglc,\lpu\}\le C$. To show that such finite-dimensional bases exists we need to feed a rather involved construction with a basis $\YB$ of a Banach space $(\YY, \norm{\cdot}_\YY)$ and a nonincreasing sequence $(c_n)_{n=1}^\infty$ in $(0,1/4p]$ satisfying the following properties for some constant $K$:
\begin{enumerate}[label=(P.\arabic*), leftmargin=*, widest=3]
\item $\YB$ is a bimonotone normalized Schauder basis with
\begin{equation}\label{lemmaqglclpanotqgdem2}
\frac{1}{K}
\norm{\Ind_{\varepsilon, D}}_{\YY}\le \abs{D}^{1/p}\le K \norm{\Ind_{\varepsilon, D}}_{\YY}
\end{equation}
for every finite set $D\subset \NN$ and every $\varepsilon\in \EE^D$. Besides, $\YB$ is truncation quasi-greedy with constant $K$, and LPU with constant $K$.

\item\label{cndos} $\norm{(c_n)_{n=1}^\infty}_{\ell_p}=\infty$.

\item\label{cn} There is $y\in\YY$ whose coefficients with respect to $\YB$ are $(c_n)_{n=1}^\infty$, so that
$M_0:=1+\sup_{1\le k\le m}\norm{\sum_{n=k}^{m}c_n \, \yy_n}_{\YY}<\infty$.
\end{enumerate}

Notice that if an unconditional basis $\YB_0$ whose fundamental function is equivalent to the fundamental function of the unit vector basis in $\ell_p$
is dominated (but not equivalent) to the canonical $\ell_p$-basis, then there are scalars $(c_n)_{n=1}^\infty$ and a rearrangement $\YB$ of $\YB_0$ such that these conditions are satisfied. So if $p=1$ we could pick for instance the unit vector system of Tsirelson's space, or the natural basis of the dyadic Hardy space $H_1$. If $p>1$ one may choose the canonical unit vector basis of the weak Lorentz space $\ell_{p, \infty}$ (which is a Banach space under a suitable renorming). Note that, since $\ell_{p,1}\subset\ell_p$, \ref{cndos} implies $\sum_{n=1}^\infty n^{-1/p'} c_n=\infty$.

We start our construction picking $m_1\in\NN^{\ge 4}$ such that
\begin{equation}\label{lemmaqglclpanotqg cond0}
\sum_{n=1}^{m_1}c_n^p > (10 M^2M_0)^p.
\end{equation}
Set $A=\{1,\dots,m_1\}$ and $M_2=5/c_{m_1}$. Choose integer intervals $(I_n)_{n=1}^{m_1}$ and $(B_n)_{n=1}^{m_1}$ so that
\begin{enumerate}[label=(\arabic*), leftmargin=*, widest=3]
\item $A<I_n<B_n<I_{n+1}<B_{n+1}$ for all $1\le n\le m_1-1$, and
\item for all $n\in A$,
\begin{align}\label{lemmaqglclpanotqg condII}
2M_2c_n&\le \sum_{k\in I_n} \frac{c_k}{k^{1/p'}} \le 3M_2c_n,\\
\sum_{k=1}^{\abs{B_n}} \frac{1}{k^{1/p'}}\le M_2 c_n &\le \sum_{k=1}^{\abs{B_n}+1} \frac{1}{k^{1/p'}}\label{lemmaqglclpanotqg condIII}
\end{align}
\end{enumerate}

Let $I=\bigcup_{n\in A}I_n$, $B:=\bigcup_{n\in A}B_n$, and $E_M=A\cup B\cup I$. Let $l_1$ be the largest integer in $E_M$, i.e., $l_1=\max B_{m_1}$. For each $n\in A$, put $\C_n=\{n\}\cup I_n\cup B_n$, and let $\mathcal{A}$ be the set of sequences $\Delta=(\delta_n)_{n\in E_M}$ with the following properties:
\begin{enumerate}[label=(R.\arabic*), leftmargin=*, widest=3]
\item For each $n\in E_M$, $\abs{\delta_n}\le 1$.
\item For each $n\in A$,
\[
M_2\, c_n \delta_n +\sum_{k\in I_n}\delta_k\, c_k +\sum_{k\in B_n}\delta_k=0.
\]
\item\label{qglclpunotqgI} For each $k\in I$, $\abs{\delta_k}\le k^{-{1}/{p'}}$.
\item \label{qglclpunotqgB}For each $n\in A$ and $D\subset B_n$,
\begin{align*}
\sum_{j\in D}\abs{\delta_j}\le& \sum_{k=1}^{\abs{D}}\frac{1}{k^{{1}/{p'}}}.
\end{align*}
\end{enumerate}

For each $\Delta \in \mathcal{A}$, $n\in A$ and $l \in \C_n$ we define a semi-norm on $\XX_M:=\FF^{E_M}$ by
\[
\norm{f}_{\Delta,n,l}=
\left(\sum_{1\le j< n}\left\vert \delta_j\, a_j+\sum_{k\in I_j}\delta_k \, a_k +\sum_{k\in B_j}\delta_k \, a_k\right\vert^p+\left\vert \sum_{\substack{ j\in \C_n\\n\le j\le l}}\delta_j a_j\right\vert^p\right)^{{1}/{p}},
\]
where $f=(a_n)_{n\in E_M}$.

Now, we define a seminorm $\norm{\cdot}_{\triangleleft}$ on $\XX_M$ by
\[
\norm{f}_{\triangleleft} =\sup_{\Delta\in \mathcal{A}}\sup_{n\in A}\sup_{l\in \C_n} \norm{f}_{\Delta,n,l},
\]
and we define a norm on $\XX_M$ by
\[
\norm{ f}_{\XX_M}=\max\enbrace{ \norm{f}_{\ell_{\infty}}, \norm{ S_A(f)}_{\YY}, \norm{S_I(f)}_{\YY}, \norm{ f}_{\triangleleft} }.
\]

Equip $E_M$ with the ordering $\preceq$ defined as
\begin{itemize}
\item If $n_1, n_2\in A$, $n_1<n_2$ and $k\in I_{n_1}\cup B_{n_1}$, then $n_1\prec k\prec n_2$,
\item If $n_1,n_2\in I\cup B$, then $n_1\preceq n_2$ if and only if $n_1\le n_2$.
\end{itemize}
Let $\XB_M=(\xx_{M,n})_{n\in E_M}$ be the canonical basis of $\XX_M$. Since $\YB$ is bimonotone and normalized,
$\XB_M$ is a monotone normalized Schauder basis of $(\XX_M,\norm{\cdot}_{\XX_M})$ with normalized dual basis. Let us check that it sastisfies the desired properties by finding an upper bound for the fundamenal function of $\XB_M$.

Fix $D\subset E_M$ and $\varepsilon=(\varepsilon_j)_{j\in D}\in \EE^D$. Set $D_1:=D\cap A$, $D_2:=D\cap I$, and $D_3=D\cap B$. For $\Delta\in \mathcal{A}$, $n\in A$ and $l\in \C_n$, we have
\begin{align*}
\norm{\Ind_{\varepsilon, D_1}}_{\Delta,n,l}^p&=\sum_{1\le j< n}\abs{ \delta_j \, \xx_{M,j}^*\left(\Ind_{\varepsilon, D_1}\right) }^p+\abs{ \delta_n\, \xx_{M,n}^*\left(\Ind_{\varepsilon, D_1}\right)}^p
\le \abs{D_1},\\
\norm{\Ind_{\varepsilon, D_2}}_{\Delta,n,l}^p
&\le \sum_{j\in A}\left(\sum_{k\in I_j\cap D}\abs{\delta_k }\right)^p \le \sum_{j\in A}\left(\sum_{k=1}^{\abs{I_j\cap D}}\frac{1}{k^{{1}/{p'}}}\right)^p
\le p^p \abs{D_2},\\
\norm{\Ind_{\varepsilon, D_3}}_{\Delta,n,l}^p&\le \sum_{j\in A} \left(\sum_{k\in B_j\cap D_3}\abs{\delta_k} \right)^p
\le \sum_{j\in A} \left(\sum_{k=1}^{\abs{B_j\cap D_3}}\frac{1}{k^{{1}/{p'}}}\right)^p \le p^p \abs{D_3},
\end{align*}
so that
\[
\norm{\Ind_{\varepsilon, D}}_{\triangleleft} \le 3^{1/p} p \abs{D}^{1/p}.
\]
Hence, using \eqref{lemmaqglclpanotqgdem2},
\begin{equation}\label{eq:upperdemocracy}
\norm{\Ind_{\varepsilon, D}}_{\XX_M} \le C_1 \abs{D}^{1/p}, \, \mbox{ where }C_1= \max\{3^{1/p} p, K\}.
\end{equation}

In order to estimate the QGLC and the LPU constant of $\XB_M$, we fix $f\in \Cu$ with $\supp(f)\cap D=\emptyset$ and $g=(a_n)_{n\in E_M}\in\XX_M$ such that $\abs{a_n}\ge 1$ if $n\in D$ and $a_n= 0$ otherwise. Let $h$ be either the vector $\Ind_{\varepsilon, D}+f$ or the vector $g$. Since $\YB$ is QGLC and LPU with constant $K$, and \eqref{lemmaqglclpanotqgdem2} holds, we obtain
\[
\frac{1}{K} \abs{D_1}^{{1}/{p}}\le
\norm{ \Ind_{\varepsilon, D_1}}_{\YY}=\norm{ S_A\left(\Ind_{\varepsilon, D}\right)}_{\YY}
\le K \norm{ S_A(h)}_{\YY}
\le K \norm{h}_{\XX_M}.
\]
Similarly,
\[
\frac{1}{K} \abs{D_2}^{{1}/{p}}
\le \norm{ \Ind_{\varepsilon, D_2}}_{\YY}
=\norm{ S_I\left(\Ind_{\varepsilon, D}\right)}_{\YY}\le K \norm{ S_I(h)}_{\YY}
\le K \norm{h}_{\XX_M}.
\]

With the aim to obtain a similar estimate for the norm of $\Ind_{\varepsilon, D}+f$ which involves $\abs{D_3}$, we set $A_3=\{n\in A \colon D\cap B_n\not=\emptyset\}$. For each $n\in A_3$, let $(k_j^n)_{j=1}^{\abs{D\cap B_n}}$ be the increasing enumeration of $D\cap B_n$. Put
\[
\delta_{k_j^n}= \frac{1}{j^{{1}/{p'}}\varepsilon_{k_j^n}}, \quad j=1,\dots, \abs{D\cap B_n},
\]
and define
\[
\delta_n:=-\frac{1}{M_2c_n}\sum_{j=1}^{|D\cap B_n|}\delta_{k_j^n}, \quad n\in A_3.
\]

For all the other $n\in E_M$, let $\delta_n=0$. Then by construction $\Delta=(\delta_n)_{n\in E_M}\in \mathcal{A}$. Since $M_2c_n>2$ for all $n\in A$,
\begin{align*}
\norm{ \Ind_{\varepsilon, D}+f}_{\XX_M}^p &\ge \norm{ \Ind_{\varepsilon, D}+f}_{\Delta,m_1,l_1}^p \nonumber\\
&= \sum_{n\in A_3}\abs{\delta_n\xx_n^*\left(\Ind_{\varepsilon, D}+f\right)+\sum_{k\in B_n}\delta_k \xx_k^*\left(\Ind_{\varepsilon, D}+f\right)}^p\\
&\ge \sum_{n\in A_3}\abs{ -\abs{ \delta_n} +\sum_{k=1}^{\abs{D\cap B_n}}\frac{1}{k^{{1}/{p'}}} }^p\\
&\ge \frac{1}{2^p}\sum_{n\in A_3}\left( \sum_{k=1}^{\abs{D\cap B_n}}\frac{1}{k^{{1}/{p'}}} \right)^p\\
&\ge \frac{1}{(4p)^p}\abs{D_3}.
\end{align*}
Summing up,
\begin{equation}\label{eq:QGLC}
\abs{D}^{1/p} \le C_2 \norm{ \Ind_{\varepsilon, D}+f}_{\XX_M}, \, \mbox{ where }C_2= \left( (4p)^p+2K^{2p}\right)^{1/p}.
\end{equation}
In the particular case that $f=0$, inequality \eqref{eq:QGLC} yields
\begin{equation}\label{eq:lowerdemocracy}
\abs{D}^{1/p} \le C_2 \norm{ \Ind_{\varepsilon, D}}_{\XX_M}.
\end{equation}

Recall that we have related the norm of $g$ with $\abs{D_1}$ and $\abs{D_2}$. Now we take care of relating the norm of $g$ with $\abs{D_3}$. To that end we build another family $(\delta_n)_{n\in E_M}$. Put
\[
A_{3,1}=\enbrace{ n\in A_3 \colon \sum_{k\in I_n\setminus D}\frac{c_k}{k^{{1}/{p'}}}\ge \sum_{k=1}^{\abs{B_n}}\frac{1}{k^{{1}/{p'}}} }
\]
and
\[
D_{3,1}=\bigcup_{n\in A_{3,1}}D\cap B_n.
\]

For each $n\in A_{3,1}$, let
\[
\delta_{k^n_j}=\frac{1}{j^{{1}/{p'}}\sgn\left(a_{k_j^n}\right)}, \quad j=1,\dots, \abs{B_n\cap D}.
\]
Then we choose scalars $(\delta_j)_{j\in I_n\setminus D}$ with $\abs{\delta_j}\le j^{-{1}/{p'}}$ for each $j\in I_n\setminus D$ such that
\[
\sum_{j\in I_n\setminus D}\delta_j c_j+\sum_{j=1}^{|B_n\cap D|}\delta_{k_j^n}=0.
\]
For any other $j\in E_M$, set $\delta_j=0$. Then $\Delta=(\delta_j)_{j\in E_M}\in \mathcal{A}$, and
\begin{align*}
\norm{g}_{\XX_M}\ge \norm{g}_{\Delta,m_1,l_1}
&=\left(\sum_{n\in A_{3,1}}\left(\sum_{j=1}^{\abs{B_n\cap D}}\frac{\abs{a_{k_j^n}}}{ j^{{1}/{p'}}}\right)^p\right)^{1/p}\\
&\ge\sum_{n\in A_{3,1}}\sum_{j=1}^{\abs{B_n\cap D}}\frac{1}{ j^{{1}/{p'}}}\\
&\ge \frac{1}{2p}\abs{ D_{3,1}}^{{1}/{p}}.
\end{align*}
It remains to estimate the cardinality of
\[
D_{3,2}=D_3\setminus D_{3,1}=\bigcup_{n\in A_3\setminus A_{3,1}}D\cap B_n.
\]
From \eqref{lemmaqglclpanotqg condII}, \eqref{lemmaqglclpanotqg condIII}, and the fact that $\abs{c_j}\le 1/(4p)$ for all $j\in \NN$, it follows that for each $n\in A_{3,2}:=A\setminus A_{3,1}$,
\begin{align*}
\abs{D\cap I_n}^{{1}/{p}}\ge 4\sum_{k\in I_n\cap D}\frac{c_k}{k^{{1}/{p'}}}&=4\left(\sum_{k\in I_n}\frac{c_k}{k^{{1}/{p'}}}-\sum_{k\in I_n\setminus D}\frac{c_k}{k^{{1}/{p'}}}\right)\\
&\ge 4\left( \sum_{k\in I_n}\frac{c_k}{k^{{1}/{p'}}}-\sum_{k=1}^{\abs{B_n}}\frac{1}{k^{\frac{1}{p'}}}\right)\\
&\ge 4\sum_{k=1}^{\abs{B_n}}\frac{1}{k^{{1}/{p'}}}\\
&\ge \frac{2}{p}\abs{B_n}^{{1}/{p}}.
\end{align*}
Hence,
\[
\abs{D_{3,2}}=\sum_{n\in A_{3,2}}\abs{D\cap B_n}\le \sum_{n\in A_{3,2}}p^p \abs{D\cap I_n}\le p^{p}\abs{D_{2}}.
\]
Summing up,
\begin{equation}\label{eq:LPU}
\abs{D}^{1/p} \le \left( \abs{D_1} +\abs{D_{3,1}} + (1+p^p) \abs{D_2}\right)^{1/p} \le C_3 \norm{g}_\XX,
\end{equation}
where
\[C_3= ((2+p^p)K^{2p}+(2p)^p)^{1/p}.
\]

Combining \eqref{eq:upperdemocracy}, \eqref{eq:QGLC}, \eqref{eq:lowerdemocracy}, and \eqref{eq:LPU} yields the desired estimates for $\XB_M$ with $C=
C_1\max\{ C_2, C_3\}$.

Finally, we prove the lower estimate for the truncation quasi-greedy constant.
Set
\[
f_0=\Ind_{B}+\sum_{n\in A}M_2c_n\xx_{M,n}+\sum_{k\in I}c_k\xx_{M,k}.
\]
From \eqref{lemmaqglclpanotqg condII}, \eqref{lemmaqglclpanotqg condIII}, and the definition of $\mathcal{A}$ it follows that
\begin{align*}
\norm{f_0}_{\triangleleft}&=\sup_{\Delta\in\mathcal{A}}\sup_{n\in A}\sup_{l\in \{n\}\cup B_n}\abs{\sum_{\substack{j\in \C_n\\n\le j\le l}}\delta_j\xx_j^*(f_0)}\\
&\le \sup_{\Delta\in\mathcal{A}}\sup_{n\in A}\left(M_2c_n+\sum_{k=1}^{\abs{B_n}}\frac{1}{k^{{1}/{p'}}}+\sum_{k\in I_n}\frac{c_k}{k^{{1}/{p'}}}\right)\\
&\le 5M_2c_1< 5M_2.
\end{align*}

Also, $\norm{ S_A(f_0)}_{\YY}\le M_2M_0$ and $\norm{ S_I(f_0)}_{\YY}\le M_0$ by definition of $M_0$, so combining we get $\norm{ f_0}_{\XX_M}\le 5M_2M_0$.

In order to estimate from below the norm of $\Ind_B$, for each $n\in A$ define $(\delta_k)_{k\in B_n}$ by $\delta_{k_{n,j}}:=j^{-{1}/{p'}}$, where
$(k_{n,j})_{j=1}^{\abs{B_n}}$ is the increasing rearrangement of $B_n$. Set
\[
\delta_n=-(M_2c_n)^{-1}\sum_{j=1}^{\abs{B_n}}\frac{1}{j^{{1}/{p'}}},\quad n\in A,
\]
and let $\delta_k=0$ for every other $k\in E_M$. Then $\Delta=(\delta_k)_{k\in \XX_M}\in \mathcal{A}$. Considering our choice of $A$ and \eqref{lemmaqglclpanotqg condIII} we deduce that
\begin{align*}
\norm{\Ind_B}_{\XX_M}\ge \norm{\Ind_B}_{\Delta,m_1,l_1}
&= \left(\sum_{n\in A} \left(\sum_{j=1}^{\abs{B_n}}\frac{1}{j^{{1}/{p'}}}\right)^p\right)^{{1}/{p}}\\
&\ge \left(\sum_{n\in A}\left( \frac{M_2c_n}{2}\right)^p\right)^{{1}/{p}}\\
&> 5M_2 M_0M^2.
\end{align*}
Since
\[
\norm{ \Ind_B}_{\XX_M}\le \tqg\norm{\Ind_{A\cup B}}_{\XX_M} \le \tqg^2 \norm{ f_0}_{\XX_M},
\]
the above estimates combined yield that $\tqg>M$, and the proof is complete.
\end{proof}

\begin{proof}[Proof of Theorem~\ref{thm:example}]
Just combine Theorem~\ref{thm:Miguel}, Theorem~\ref{thm:main}, and Theorem~\ref{thm:AAB2023}.
\end{proof}

\section{The role of the truncation operators}\label{TOvsQGLC}\noindent
Besides showing that truncation quasi-greediness is a stronger property than quasi-greediness for largest coefficients, Theorem~\ref{thm:Miguel} yields the first known example of a LPU basis that is not truncation quasi-greedy. In this section we focus on the relation between QGLC and LUCC bases. This study is tightly connected with a better understanding of the behavior of the truncation operators.

Given a basis $\XB$ of a quasi-Banach space and $A\subset_f\NN$, for $f\in\XX$ we set
\begin{align*}
\RTO_A(f) &= \min_{n\in A} \abs{\xx_n^*(f)} \Ind_{\varepsilon(f),A}, \\
\TO_A(f)&=\RTO_A(f)+f-S_A(f).
\end{align*}
Using this notation, a basis $\XB$ is truncation quasi-greedy if and only if the \emph{restricted truncation operators} $\RTO_A$ have the property that $\RTO_A(f)$ remains bounded when $f$ runs over the unit ball of $\XX$ and $A$ runs over all greedy sets of $f$. From now on we will use the term \emph{flattening quasi-greedy bases} to refer to those bases whose \emph{truncation operators} $\TO_A$ have the corresponding property, and define the flattening quasi-greedy constant $\fqg$ as the optimal constant $C$ such that for all greedy sets $A$ of $f\in\XX$,
\[
\norm{\TO_A(f)}\le C \norm{f}.
\]
Despite the fact that the evolution of the theory has recently evinced the relevance of the restricted truncation operators, the informed reader will be surely aware that the truncation operators made their appearance in abstract greedy approximation theory way back in the important article \cite{DKKT2003}. If $\XB$ is quasi-greedy, then apart from truncation quasi-greedy, $\XB$ is also flattening quasi-greedy. Moreover, the truncation operators play an important role in the isometric theory of greedy-like bases \cite{AlbiacAnsorena2017b}. Since flattening quasi-greedy bases are LUCC, the following result shows that LUCC bases need not be QGLC.

\begin{theorem}\label{thm:FTQvsUCC}
There is a monotone Schauder basis of a Banach space that it is flattening quasi-greedy but it is not UCC.
\end{theorem}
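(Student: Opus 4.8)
The plan is to build a tailored finite-dimensional example (and then take an infinite direct sum in the style of the construction for Theorem~\ref{thm:Miguel}) on which the truncation operators $\TO_A$ are uniformly bounded while the UCC inequality fails badly. First I would clarify what each property demands: flattening quasi-greediness controls $\norm{\TO_A(f)}=\norm{\RTO_A(f)+f-S_A(f)}$, which essentially says the ``flattened'' vector $\Ind_{\varepsilon(f),A}\cdot\min_{n\in A}\abs{\xx_n^*(f)}+(f-S_A(f))$ is comparable to $f$; failing UCC means there are unit-coefficient vectors $\Ind_{\varepsilon,A}$ whose norm is not controlled by the norm of the same vector with some coordinates deleted, i.e.\ $\norm{\Ind_{\varepsilon,A}}$ can be huge while $\norm{\Ind_{\varepsilon,A'}}$ stays bounded for $A'\subset A$.

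The key mechanism I would exploit is that flattening quasi-greediness is a condition about what happens when you \emph{shrink the large coefficients down to the threshold level}, whereas UCC is about \emph{deleting} coordinates outright. These are genuinely different operations, and a norm can be engineered to see one but not the other. Concretely, I would design a norm on $\FF^{E}$ that contains a family of ``test functionals'' built from carefully chosen sign-and-weight patterns $\Delta$, each of which pairs a block of unit coefficients against a single large coefficient whose size equals the sum of the block weights — so that on the flattened vector these functionals nearly cancel (giving a good bound for $\TO_A$), but on the full unit-coefficient vector $\Ind_B$ over the block the large coefficient is simply absent and the block contributes its full, unbounded, $\ell_1$-type mass. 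This is exactly the ``$A$ versus $B_n$'' cancellation idea already used in the proof of Theorem~\ref{thm:Miguel}: there the auxiliary coefficients $M_2 c_n$ sit on $A$ and the unit coefficients sit on $B_n$, and the seminorm $\norm{\cdot}_{\Delta,n,l}$ sums these with sign patterns that cancel on the natural vector $f_0$ but not on $\Ind_B$. I would reuse that skeleton, but now I need the full natural vector $f_0$ (including its large coordinates on $A$) to also control $\TO_A(f_0)$ — which it does automatically, since $\TO_A$ leaves $f-S_A(f)$ untouched and only rescales the $A$-part down to the threshold, and the threshold value is chosen so the cancellation in each $\Delta$ still works. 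The non-UCC witness is then $\Ind_B$, whose norm blows up like $\bigl(\sum_{n\in A}(M_2c_n/2)^p\bigr)^{1/p}$ while $\Ind_{B'}$ for sub-blocks stays small by the $\ell_{p'}$-decay constraints \ref{qglclpunotqgB}-type bounds on the $\delta$'s.

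Then the steps, in order: (1) fix $p$ (likely $p=1$ or $p=2$, $p=1$ being cleanest) and, for each $M$, choose block lengths so the relevant $\ell_p$-sum exceeds $M$; (2) define the index set $E_M$ with $A$-coordinates carrying large values and $B_n$-coordinates carrying units, define the seminorm via a family $\mathcal{A}$ of admissible $\Delta$'s with an $\ell_1$- or $\ell_{p'}$-type size restriction on the $\delta$'s, and take the max with an $\ell_\infty$-norm (and whatever auxiliary $\ell_p$ or unconditional-summand norms are needed to keep the basis monotone Schauder and the functionals well-defined); (3) verify $\XB_M$ is a monotone normalized Schauder basis; (4) check flattening quasi-greediness: given a greedy set $A'$ of a vector $f$, show $\norm{\TO_{A'}(f)}\le C\norm{f}$ by testing against an arbitrary $\Delta$ and using that the flattening operation preserves the approximate cancellation while not increasing the $\ell_\infty$-norm; (5) exhibit the failure of UCC by computing $\norm{\Ind_B}_{\XX_M}$ from below (choosing $\delta_{k_{n,j}}=j^{-1/p'}$ as in the Theorem~\ref{thm:Miguel} proof) against $\norm{\Ind_{A\cup B}}$ or against a sub-vector that stays bounded; (6) assemble the $\ell_p$-direct sum $\XX=(\oplus_M\XX_M)_p$, which preserves monotone Schauder-ness and the per-summand flattening-quasi-greedy constant while making the UCC constant infinite.

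The main obstacle I anticipate is step~(4): unlike truncation quasi-greediness, flattening quasi-greediness must be checked against \emph{every} greedy set $A'$ of \emph{every} vector $f$, not just the one distinguished vector $f_0$, so the norm has to be robust — I must ensure that replacing $S_{A'}(f)$ by $\RTO_{A'}(f)$ never inflates any of the test-functional values $\norm{\cdot}_{\Delta,n,l}$ by more than a constant, for all positions of $A'$ relative to the block structure. This is delicate because a greedy set can cut across the blocks $\C_n$ arbitrarily, and the cancellation built into the $\Delta$'s is adapted to the natural block partition; the fix is presumably to make the admissibility constraints on $\mathcal{A}$ loose enough (e.g.\ only the $\ell_{p'}$-decay bound and $\abs{\delta_n}\le1$, no forced linear relation, or a relation that survives truncation) that the flattening operation maps admissible test vectors to admissible ones, so that $\norm{\TO_{A'}(f)}$ is bounded by $\norm{f}$ essentially by definition — while still keeping the constraints tight enough that $\norm{\Ind_B}$ is forced to be large. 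Balancing these two competing demands is where the real work lies.
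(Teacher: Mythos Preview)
Your approach is genuinely different from the paper's, and considerably harder. The paper avoids finite-dimensional constructions entirely: it introduces a ``twisted sum'' $\XB\ltimes\YB$ of two bases, defined by $\zz_{2k-1}=(\xx_k,0)$ and $\zz_{2k}=(-\xx_k,\yy_k)$ in $\XX\oplus\YY$, and proves (Lemma~\ref{lemmacomb2}) that whenever $\XB$ and $\YB$ are \emph{unconditional}, the combined basis $\XB\ltimes\YB$ is automatically flattening quasi-greedy. The verification of flattening quasi-greediness reduces, via the coordinate formulas $\xx_n^*\circ P_\XX=\zz_{2n-1}^*-\zz_{2n}^*$ and $\yy_n^*\circ P_\YY=\zz_{2n}^*$, to the elementary scalar inequality $\abs{z_1-z_2}\le\abs{t_1z_1-t_2z_2}$ for $\abs{z_1}\le\abs{z_2}$ and $1\le t_1\le t_2$ (Lemma~\ref{lemmacomplex}); this handles \emph{all} greedy sets at once, which is exactly the obstacle you flagged in your step~(4). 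The failure of UCC then comes for free by choosing $\XB,\YB$ with incomparable fundamental functions: $\norm{\Ind_{B_m}}=\norm{\Ind_{A_m}[\XB]}$ while $\norm{\Ind_{B_m\cup D_m}}=\norm{\Ind_{A_m}[\YB]}$, because the $\XX$-parts of $\Ind_{B_m}$ and $\Ind_{D_m}$ cancel.

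Your plan, by contrast, has a real gap at the point you yourself identify. First, the Theorem~\ref{thm:Miguel} skeleton cannot be reused as-is: that basis is QGLC, hence UCC, so you would need a substantive redesign of the admissible family $\mathcal{A}$, not a tweak. Second, and more seriously, your step~(4) is not a verification but a hope: you want the flattening operation to ``map admissible test vectors to admissible ones,'' but $\TO_A$ acts on the argument $f$, not on the functionals $\Delta$, and for a generic greedy set cutting across the blocks there is no reason the seminorms $\norm{\cdot}_{\Delta,n,l}$ should decrease under flattening --- indeed, the linear relation (R.2) defining $\mathcal{A}$ is precisely what makes the cancellation in $f_0$ work, and dropping it (as you suggest) would destroy the lower bound on $\norm{\Ind_B}$. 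The paper's structural trick sidesteps this tension completely; your route would require inventing a new norm from scratch, and you have not yet indicated what it would be.
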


The main technique to prove Theorem~\ref{thm:FTQvsUCC} depends on knowing how to combine unconditional bases to obtain flattening quasi-greedy bases in a way that fits our purposes. Let us next detail the construction we will carry out.

Given Banach spaces $\XX$ and $\YY$, we will endow its direct sum $\XX\oplus\YY$ with the suppremum norm
\[
\norm{(x,y)}=\max\enbrace{\norm{x},\norm{y}}, \quad x\in\XX, \, y\in\YY,
\]
and we will identify $\XX^*\oplus\YY^*$ with the dual space of $\XX\oplus\YY$. We will denote by $P_{\XX}$ and $P_{\YY}$ the coordinate proyections from $\XX\oplus\YY$ onto $\XX$ and $\YY$, respectively.

Given sequences $\XB=(\xx_n)_{n=1}^\infty$ and $\YB=(\yy_n)_{n=1}^\infty$ in $\XX$ and $\YY$, respectively, we consider the sequences $\XB \ltimes \YB=(\zz_n)_{n=1}^\infty$ and $\XB \rtimes \YB=(\ww_n)_{n=1}^\infty$ in $\XX\oplus \YY$ defined for each $k\in\NN$ by
\[
\zz_{2k-1}=(\xx_{k},0),\, \zz_{2k}=(-\xx_k,\yy_{k}), \quad \ww_{2k-1}=(\xx_k,\yy_k), \, \ww_{2k}=(0,\yy_k).
\]

Theorem~\ref{thm:FTQvsUCC} will be a consequence of Lemma~\ref{lemmacomb2}. For further reference, prior to state it we record an elementary lemma.

\begin{lemma}\label{lemmacomplex}
Let $z_1$ and $z_2$ be complex escalars with $\abs{z_1}\le \abs{z_2}$, and let $1\le t_1 \le t_2<\infty$. Then
\[
\abs{z_1-z_2} \le \abs{t_1 \, z_1-t_2 \, z_2}.
\]
\end{lemma}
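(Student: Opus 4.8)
The plan is to reduce everything to a comparison of squared moduli and an algebraic decomposition that exhibits the difference $t_2 z_2 - t_1 z_1$ as a sum of a multiple of $z_1 - z_2$ plus a correction term that turns out to be ``aligned'' with $z_2$.

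First I would record the identity
\[
t_2\, z_2 - t_1\, z_1 = t_1 (z_2 - z_1) + (t_2 - t_1)\, z_2 ,
\]
which is checked by expanding the right-hand side. Setting $u = z_2 - z_1$ and $v = z_2$, and noting $t_1 \ge 1$ and $t_2 - t_1 \ge 0$, I would then compute
\[
\abs{t_2\, z_2 - t_1\, z_1}^2 = t_1^2 \abs{u}^2 + 2 t_1 (t_2 - t_1)\operatorname{Re}(u \bar v) + (t_2 - t_1)^2 \abs{v}^2 .
\]
The crux is that the cross term is nonnegative: indeed
\[
\operatorname{Re}(u \bar v) = \operatorname{Re}\bigl((z_2 - z_1)\overline{z_2}\bigr) = \abs{z_2}^2 - \operatorname{Re}(z_1 \overline{z_2}) \ge \abs{z_2}^2 - \abs{z_1}\,\abs{z_2} \ge 0 ,
\]
using $\operatorname{Re}(z_1\overline{z_2}) \le \abs{z_1 \overline{z_2}} = \abs{z_1}\abs{z_2}$ for the first inequality and the hypothesis $\abs{z_1}\le\abs{z_2}$ for the second.

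Since all three summands in the expansion of $\abs{t_2 z_2 - t_1 z_1}^2$ are then nonnegative, I would discard the last two and the factor $t_1^2 \ge 1$ to conclude
\[
\abs{t_2\, z_2 - t_1\, z_1}^2 \ge t_1^2 \abs{z_2 - z_1}^2 \ge \abs{z_2 - z_1}^2 ,
\]
which is exactly the claimed inequality after taking square roots. I do not anticipate a genuine obstacle here; the only point requiring a moment's care is the sign of the cross term, which is precisely where both hypotheses $\abs{z_1}\le\abs{z_2}$ and $t_1\ge 1$ (together with $t_1 \le t_2$) enter, so the statement is sharp in the sense that all three assumptions are used.
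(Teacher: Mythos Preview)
Your proof is correct. The decomposition $t_2 z_2 - t_1 z_1 = t_1(z_2 - z_1) + (t_2 - t_1) z_2$ is clean, and the verification that the cross term $\operatorname{Re}\bigl((z_2-z_1)\overline{z_2}\bigr)$ is nonnegative is exactly where the hypothesis $\abs{z_1}\le\abs{z_2}$ enters.

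The paper takes a slightly different route. It first (implicitly) reduces to the case $t_1=1$ by dividing through by $t_1$, so that one only needs $\abs{z_1-z_2}\le\abs{z_1-t z_2}$ for $t=t_2/t_1\ge 1$. It then observes that $t\mapsto\abs{z_1-t z_2}^2$ is a parabola in $t$ whose vertex lies at $\operatorname{Re}(z_1\overline{z_2})/\abs{z_2}^2\in[-1,1]$ (again using $\abs{z_1}\le\abs{z_2}$), hence is increasing on $[1,\infty)$. Both arguments ultimately hinge on the same inequality $\operatorname{Re}(z_1\overline{z_2})\le\abs{z_2}^2$; yours handles the two parameters $t_1,t_2$ simultaneously via the algebraic splitting, while the paper eliminates one parameter first and phrases the remaining step as monotonicity of a quadratic. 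Your version is arguably more self-contained, since it avoids the unstated reduction step.
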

\begin{proof}
It suffices to check that the the function
\[
f\colon\RR\to[0,\infty), \quad t\mapsto f(t)=\abs{t\varepsilon_1+\varepsilon_2}
\]
is increasing on $[1,\infty)$. This is a consequence of the fact that $f$ is a parabolic function with vertex at the point $\Re(\varepsilon_1/\varepsilon_2)\in[-1,1]$.
\end{proof}

\begin{lemma}\label{lemmacomb2}Let $\XB=(\xx_n)_{n=1}^\infty$ and $\YB=(\yy_n)_{n=1}^\infty$ be bases of Banach spaces $\XX$
and $\YY$ respectively, and let $\XB^*=(\xx_n^*)_{n=1}^\infty$ and $\YB^*=(\yy_n^*)_{n=1}^\infty$ be their dual bases. Then $\XB \ltimes \YB=(\zz_n)_{n=1}^\infty$ is a basis of $\XX\oplus\YY$ with dual basis $\XB^* \rtimes \YB^*=(\zz_n^*)_{n=1}^\infty$. The coordinate projections on $\XX$ and $\YY$ are determined by the identities
\begin{equation}\label{eq:BeraSeis}
\xx_n^*\circ P_\XX = \zz_{2n-1}^*-\zz_{2n}^*,\quad \yy_n^*\circ P_\YY=\zz_{2n}^*, \quad n\in \NN.
\end{equation}
Moreover,
\begin{enumerate}[label=(\roman*), leftmargin=*, widest=ii]
\item\label{lemmacomb2:Sch} if $\XB$ and $\YB$ are Schauder bases, so is $\XB \ltimes \YB$, and
\item\label{lemmacomb2:FQG} if $\XB$ and $\YB$ are unconditional bases, then $\XB \ltimes \YB$ is flattening quasi-greedy.
\end{enumerate}
\end{lemma}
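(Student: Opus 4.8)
The plan is to verify the claims about $\XB \ltimes \YB$ in the order they are stated, treating the basis and duality assertions as linear-algebraic identities and reserving the real work for part \ref{lemmacomb2:FQG}. First I would check that $\XB^* \rtimes \YB^* = (\zz_n^*)_{n=1}^\infty$ is biorthogonal to $\XB \ltimes \YB$: by definition $\zz_{2k-1}^* = (\xx_k^*, \yy_k^*)$ and $\zz_{2k}^* = (0, \yy_k^*)$ (reading off the $\rtimes$ construction), so I just evaluate $\zz_i^*(\zz_j)$ on the four parity combinations using $\xx_m^*(\xx_k) = \delta_{m,k}$ and $\yy_m^*(\yy_k) = \delta_{m,k}$; the cross terms cancel exactly because of the sign in $\zz_{2k} = (-\xx_k, \yy_k)$. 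Norm-boundedness of both sequences is immediate from the supremum norm on $\XX \oplus \YY$. For density of the span, note $(\xx_k, 0) = \zz_{2k-1}$ and $(0, \yy_k) = \zz_{2k-1} + \zz_{2k}$ both lie in $\spn(\XB \ltimes \YB)$, so the span contains $\spn\XB \times \{0\}$ and $\{0\} \times \spn\YB$, hence is dense. The identities \eqref{eq:BeraSeis} follow by applying both sides to a general element $(x,y)$ and expanding in the two bases, or simply by observing $\zz_{2n-1}^* - \zz_{2n}^* = (\xx_n^*, \yy_n^*) - (0,\yy_n^*) = (\xx_n^*, 0) = \xx_n^* \circ P_\XX$ and $\zz_{2n}^* = (0, \yy_n^*) = \yy_n^* \circ P_\YY$.

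For \ref{lemmacomb2:Sch}, I would show that the partial sum projections of $\XB \ltimes \YB$ are uniformly bounded. The partial sum $S_{2m}$ for $\XB \ltimes \YB$ sends $(x,y)$ to $(S_m^\XB x, S_m^\YB y)$ — this is where the sign cancellation pays off again — so its norm is bounded by $\max(\sch[\XB], \sch[\YB])$; the odd partial sum $S_{2m-1}$ differs from $S_{2m}$ by the rank-one-in-each-coordinate term $\zz_{2m}^*(f)\zz_{2m} = \yy_m^*(P_\YY f)(-\xx_m, \yy_m)$, which is controlled by the basis constants of $\XB$ and $\YB$ together with the norm-boundedness of $\XB^*$ and $\YB^*$. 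Hence $\XB \ltimes \YB$ has uniformly bounded partial sums, so it is a Schauder basis.

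The main obstacle, and the heart of the lemma, is \ref{lemmacomb2:FQG}. Here I would fix $f = (x,y) \in \XX \oplus \YY$ with $\norm f \le 1$ and a greedy set $A$ of $f$ with respect to $\XB \ltimes \YB$, and estimate $\norm{\TO_A(f)} = \norm{\RTO_A(f) + f - S_A(f)}$. Write $t = \min_{n \in A}\abs{\zz_n^*(f)}$ so that $\TO_A(f)$ replaces every coefficient in $A$ by modulus exactly $t$ (keeping sign) and leaves the rest unchanged; crucially, since $A$ is greedy, every coefficient \emph{outside} $A$ has modulus $\le t$, so $\TO_A(f)$ is obtained from $f$ by an operation that never increases the modulus of any coefficient above $t$ while pushing the ``large'' ones down to $t$. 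Now I project onto $\XX$ and $\YY$ using \eqref{eq:BeraSeis}: the $\XX$-coordinate of $\TO_A(f)$ has coefficients of the form $\zz_{2n-1}^*(\TO_A f) - \zz_{2n}^*(\TO_A f)$, and the point is that this difference is dominated, coefficient by coefficient, by the corresponding difference $\zz_{2n-1}^*(f) - \zz_{2n}^*(f) = \xx_n^*(x)$ for $f$ itself — this is exactly what Lemma~\ref{lemmacomplex} is for, applied with $z_1, z_2$ the two truncated coefficients and $t_1, t_2 \ge 1$ the ratios by which $\TO_A$ scales them (both scalings are by a factor $\ge 1$ when we truncate \emph{down} toward $t$... — more precisely one must set things up so the inequality runs the right way, using that the coefficient of larger modulus is scaled by the smaller ratio). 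Thus $\abs{\xx_n^*(P_\XX \TO_A f)} \le \abs{\xx_n^*(x)}$ for every $n$, so by lattice unconditionality of $\XB$, $\norm{P_\XX \TO_A(f)}_\XX \le K_u^\XB \norm x_\XX \le K_u^\XB$; symmetrically, the $\YY$-coordinate of $\TO_A(f)$ has coefficients $\zz_{2n}^*(\TO_A f)$, each of modulus $\le \abs{\yy_n^*(y)}$ since $\TO_A$ never increases a modulus beyond... again requires care, but the upshot is $\norm{P_\YY \TO_A(f)}_\YY \le K_u^\YB$. Taking the max gives $\norm{\TO_A(f)} \le \max(K_u^\XB, K_u^\YB)$, so $\XB \ltimes \YB$ is flattening quasi-greedy. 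The delicate bookkeeping is precisely in arranging the two cases (a coefficient in $A$, scaled down to $t$, versus a coefficient outside $A$, left alone) so that Lemma~\ref{lemmacomplex} applies to the $\XX$-differences and a direct modulus comparison applies to the $\YY$-coefficients; I would handle this by splitting into the parity of the index and whether $2n-1$ or $2n$ (or both, or neither) lies in $A$.
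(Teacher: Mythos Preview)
Your proposal is correct and follows essentially the same route as the paper: the routine biorthogonality and density check, the identity $S_{2m} = (S_m^\XB \circ P_\XX,\, S_m^\YB \circ P_\YY)$ plus a rank-one correction for \ref{lemmacomb2:Sch}, and for \ref{lemmacomb2:FQG} the four-way split according to which of $2n-1,\,2n$ lie in $A$, invoking Lemma~\ref{lemmacomplex} for the $\XX$-component and a direct modulus comparison for the $\YY$-component. One minor correction to your parenthetical: it is the coefficient of \emph{larger} modulus that gets the \emph{larger} scaling ratio (the hypothesis of Lemma~\ref{lemmacomplex} is $|z_1|\le|z_2|$ with $t_1\le t_2$), and this is exactly what occurs in each of the four cases once you order the two truncated coefficients appropriately.
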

\begin{proof}
It is routine to check that $\XB\ltimes\YB$ is a basis and that \eqref{eq:BeraSeis} holds. To prove \ref{lemmacomb2:Sch} we note that, by \eqref{eq:BeraSeis},
the partial sums projections $(S_m)_{m=1}^\infty$, $(S_m^x)_{m=1}^\infty$ and $(S_m^y)_{m=1}^\infty$ relative to $\XB\ltimes\YB$, $\XB$ and $\YB$ respectively are related by the identity
\[
S_{2m}=(S_m^x\circ P_\XX, S_m^y\circ P_\YY), \quad m\in\NN.
\]
Let $K_1$ and $K_2$ the bases constants of $\XB$ and $\YB$, respectively. Set $\beta=\inf_n \norm{\yy_n}$ and $\alpha=\sup_{n\in \NN}\norm{\xx_n}$. We have
\begin{align*}
\sup_{m\in\NN} \norm{S_m}
&\le \max_{m\in \NN} \norm{\zz_{2m-1}} \norm{\zz_{2m-1}^*} + \sup_{m\in\NN} \norm{S_{2m}}\\
&= \sup_{n\in\NN} \norm{\xx_n} \max\{ \norm{\xx_n^*}, \norm{ \yy_n^*}\}
+\sup_{m\in\NN}\max\{\norm{S_m^x}, \norm{S_m^y}\}\\
& \le \sup_{n\in\NN} \max\enbrace{ 2K_1, \frac{\norm{\xx_n}}{\norm{ \yy_n}_\YY} 2K_2}
+\max\{K_1, K_2\}\\
&\le \enpar{1+ 2\max\enbrace{1,\frac{\alpha}{\beta}}}\max\{K_1,K_2\}.
\end{align*}

To see \ref{lemmacomb2:FQG} we pick functions $f$ and $g$ in $\XX\oplus \YY$ for which there is $A\subset \NN$ finite such that
\begin{itemize}
\item $a_n:=\zz_n^*(f)=\zz_n^*(g)$ and $\abs{a_n}\le 1$ for all $n\in\NN\setminus A$, and
\item $\varepsilon_n:=\zz_n^*(f)\in\EE$ and $t_n:=\zz_n^*(g)/\zz_n^*(f)\ge 1$ for all $n\in A$.
\end{itemize}
Suppose that $\YB$ is lattice-unconditional with constant $C_2$. The right-hand identity in \eqref{eq:BeraSeis} entails that
\begin{equation}\label{eq:Bera5}
\norm{ P_{\YY}(f)}\le C_2\norm{ P_{\YY}(g)}.
\end{equation}

In order to compare the norms of $P_\XX(f)$ and $P_\XX(g)$, the left-hand identity in \eqref{eq:BeraSeis} impels us to split $\NN$ into the pairwise disjoint sets
\begin{align*}
A_1&=\{n\in \NN \colon 2n-1\in A,\, 2n\in \NN\setminus A\},\\
A_2&=\{n\in \NN \colon 2n-1\in \NN\setminus A, \, 2n \in A\},\\
A_3&=\{n\in \NN \colon 2n-1\in A, \, 2n \in A\},\\
A_4&=\{n\in \NN \colon 2n-1\in \NN\setminus A,\, 2n \in \NN\setminus A\},
\end{align*}
and express $ f_n:=\xx_n^*(P_\XX(f))$ and $g_n:=\xx_n^*(P_\XX(g))$, $n\in\NN$, according to this partition. We have
\[
(f_n,g_n)=
\begin{cases}
(\varepsilon_{2n-1}-a_{2n}, t_{2n-1}\varepsilon_{2n-1}-a_{2n}) & \mbox{ if } n\in A_1,\\
( a_{2n-1}-\varepsilon_{2n}, a_{2n-1}- t_{2n}\varepsilon_{2n}) & \mbox{ if } n\in A_2,\\
(\varepsilon_{2n-1}-\varepsilon_{2n}, t_{2n-1}\varepsilon_{2n-1}- t_{2n} \varepsilon_{2n}) & \mbox{ if } n\in A_3,\\
(a_{2n-1}-a_{2n},a_{2n-1}-a_{2n}) & \mbox{ if } n\in A_4.
\end{cases}
\]
We infer from lemma~\ref{lemmacomplex} that $\abs{f_n}\le \abs{g_n}$ for all $n\in\NN$.
Then, if $\XB$ is lattice unconditional with constant $C_1$,
\begin{equation}\label{eq:Bera6}
\norm{ P_{\XX}(f)}\le C_1\norm{ P_{\XX}(g)}.
\end{equation}
Combining \eqref{eq:Bera5} and \eqref{eq:Bera6} the proof is complete.
\end{proof}

\begin{proof}[Proof of Theorem~\ref{thm:FTQvsUCC}]
Take two unconditional normalized bases $\XB$ and $\YB$ of Banach spaces $\XX$ and $\YY$ respectively, with the property that there is a sequence $(A_m)_{m\in \NN}\subset \NN$ consisting of finite subsets of $\NN$ such that
\[
1\le \frac{\alpha_m}{\beta_m}\xrightarrow[m\to \infty]{}\infty,
\]
\mbox{ where }
\[\alpha_m=\norm{\Ind_{A_m}[\XB,\XX]} \mbox{ and } \beta_m=\norm{\Ind_{A_m}[\YB,\YY]}.\]
Now combine the bases using Lemma~\ref{lemmacomb2} to obtain a flattening quasi-greedy Schauder basis $\ZB:=\XB\ltimes \YB$ of $\UU:=\XX\oplus \YY$. Define
\[
B_m=\{2n-1 \colon n\in A_m\}, \quad D_m=\{2n \colon n\in A_m\}, \quad m\in\NN.
\]
By \eqref{eq:BeraSeis} in Lemma~\ref{lemmacomb2},
\[
\Ind_{B_m}[\ZB,\UU]=(\Ind_{A_m}[\XB,\XX], 0), \quad \Ind_{D_m}[\ZB,\UU]=(-\Ind_{A_m}[\XB,\XX], \Ind_{B_m}[\YB,\YY]).
\]
Consequently, $\norm{\Ind_{B_m}[\ZB,\UU]}= \alpha_m$ and $\norm{\Ind_{B_m\cup D_m}[\ZB,\UU]}= \beta_m$, which proves that $\ZB$ is not SUCC.
\end{proof}

In a sense, Theorem~\ref{thm:FTQvsUCC} evinces that, despite the important role played by the truncation operators in the study of the thresholding greedy algorithm, flattening quasi-greediness and lower uconditionality for constant coefficients can barely be considered unconditionality-like properties of interest in the study of the efficiency of the TGA. In this regard, we show that LUCC bases need not be QGLC even under the extra assumption that they are UCC.

\begin{theorem}\label{thm:LPUnotQGLC}
There is a monotone Schauder basis $\XB$ of a Banach space $\XX$ that it is LUCC and super-democratic, hence LPU, but is not QGLC. Moreover, given $1\le p<\infty$ we can choose $\XB$ to satisfy $\udf(m)\approx m^{1/p}$ for $m\in\NN$.
\end{theorem}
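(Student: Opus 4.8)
The plan is to follow the blueprint of the proof of Theorem~\ref{thm:Miguel}: build, for each $M\in\NN$, a finite-dimensional normed space $\XX_M$ with a monotone normalized Schauder basis $\XB_M=(\xx_{M,n})_{n\in E_M}$ over a finite set $E_M\subset_f\NN$ so that, for some constant $C=C(p)$ independent of $M$, the basis $\XB_M$ is LPU with constant $\le C$, satisfies $\frac{1}{C}\abs{D}^{1/p}\le\norm{\Ind_{\varepsilon,D}}\le C\abs{D}^{1/p}$ for all $D\subset_f E_M$ and $\varepsilon\in\EE^D$, and has QGLC constant exceeding $M$. Then $\XB:=\oplus_{M=1}^\infty\XB_M$ on $(\oplus_{M=1}^\infty\XX_M)_p$ is a monotone normalized Schauder basis which, by the stability remarks gathered just before Theorem~\ref{thm:Miguel}, is LPU and inherits the same two-sided democracy estimate -- hence is superdemocratic with $\udf(m)\approx m^{1/p}$ -- while its QGLC constant, being the supremum of those of the summands, is infinite. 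Since superdemocratic bases are UCC and LPU $=$ UCC $\cap$ LUCC, $\XB$ is LUCC; this gives the statement, and by Theorem~\ref{thm:AAB2023} the basis $\XB$ is moreover not nearly unconditional.

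For the building block I would keep the auxiliary data of that proof: a bimonotone normalized Schauder basis $\YB$ of a Banach space $\YY$ that is truncation quasi-greedy, LPU, and has fundamental function $\approx m^{1/p}$, together with a nonincreasing sequence $(c_n)_{n=1}^\infty$ in $(0,1/(4p)]$ with $\sum_n c_n^p=\infty$ that is the coordinate sequence of some $y\in\YY$, so that $M_0:=1+\sup_{k\le m}\norm{\sum_{n=k}^m c_n\yy_n}_\YY<\infty$. Given $M$, choose $m_1$ with $\sum_{n\le m_1}c_n^p>(10M^2M_0)^p$, put $M_2=5/c_{m_1}$, and, exactly as in the proof of Theorem~\ref{thm:Miguel}, pick far-apart integer intervals $(I_n)_{n\le m_1}$ and $(B_n)_{n\le m_1}$ with $I_n<B_n<I_{n+1}$ calibrated so that $\sum_{k\in I_n}\frac{c_k}{k^{1/p'}}\approx M_2 c_n$ and $\sum_{k\le\abs{B_n}}k^{-1/p'}\approx M_2 c_n$. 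Set $I=\bigcup_n I_n$, $B=\bigcup_n B_n$, $\C_n=I_n\cup B_n$, and $E_M=I\cup B$ ordered as in $\NN$. The one structural change with respect to Theorem~\ref{thm:Miguel} is to discard the auxiliary ``weight'' vectors indexed by $\{1,\dots,m_1\}$ and the term $M_2 c_n\delta_n$ in the linking relation: let $\mathcal{A}$ be the set of $\Delta=(\delta_k)_{k\in E_M}$ with $\abs{\delta_k}\le1$ throughout, $\abs{\delta_k}\le k^{-1/p'}$ for $k\in I$, $\sum_{k\in D}\abs{\delta_k}\le\sum_{l\le\abs{D}}l^{-1/p'}$ whenever $D\subset B_n$ for some $n$, and $\sum_{k\in I_n}\delta_k c_k+\sum_{k\in B_n}\delta_k=0$ for every $n\le m_1$; then set $\norm{f}_{\triangleleft}=\sup_{\Delta\in\mathcal{A}}\sup_{n\le m_1}\sup_{l\in\C_n}\norm{f}_{\Delta,n,l}$, where for $f=(a_k)_{k\in E_M}$ the quantity $\norm{f}_{\Delta,n,l}$ is the $\ell_p$-combination over $j<n$ of the block sums $\abs{\sum_{k\in I_j}\delta_k a_k+\sum_{k\in B_j}\delta_k a_k}$ and the partial sum $\abs{\sum_{k\in\C_n,\,k\le l}\delta_k a_k}$, and finally $\norm{f}_{\XX_M}=\max\enbrace{\norm{f}_{\ell_\infty},\norm{S_I(f)}_\YY,\norm{f}_{\triangleleft}}$.

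The verifications then parallel those in the proof of Theorem~\ref{thm:Miguel}. Monotonicity holds because each term in the maximum is non-expansive under partial-sum projections ($\YB$ is monotone and $S_I$ restricted to the support $I$ is an initial-segment projection of $\YB$). For the democracy upper bound one caps each block sum of $\Ind_{\varepsilon,D}$ by a constant times $\abs{D\cap I_j}^{1/p}+\abs{D\cap B_j}^{1/p}$, using the caps defining $\mathcal{A}$ and the calibration, and $\ell_p$-sums to get $\norm{\Ind_{\varepsilon,D}}_{\triangleleft}\lesssim\abs{D}^{1/p}$, while $\norm{S_I(\Ind_{\varepsilon,D})}_\YY\lesssim\abs{D\cap I}^{1/p}$ and $\norm{\Ind_{\varepsilon,D}}_{\ell_\infty}=1$. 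For the lower bound one uses $\norm{S_I(\Ind_{\varepsilon,D})}_\YY\gtrsim\abs{D\cap I}^{1/p}$ (fundamental function of $\YB$, together with its LPU) and an admissible $\Delta$ that places weight $l^{-1/p'}$ on the $l$-th element of $D\cap B_j$ and restores the linking relation by dumping the compensating mass on $B_j\setminus D$ -- or on $I_j$ when $D$ fills too much of $B_j$, which is possible by the lower side of the calibration -- to get $\norm{\Ind_{\varepsilon,D}}_{\triangleleft}\gtrsim\abs{D\cap B}^{1/p}$; combining gives $\norm{\Ind_{\varepsilon,D}}\approx\abs{D}^{1/p}$. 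The LPU bound is obtained as there: for $g=\sum_{n\in D}a_n\xx_{M,n}$ with $\abs{a_n}\ge1$ one shows $\abs{D}^{1/p}\lesssim\norm{g}_{\XX_M}$ (the $I$-part through $\YB$, the $B$-part through a $\Delta$ of the above type, whose block sums only register the coefficients of $g$ on $B_j\cap D$), whence $\norm{\Ind_{\varepsilon,D}}_{\XX_M}\lesssim\abs{D}^{1/p}\lesssim\norm{g}_{\XX_M}$. Finally, to blow up the QGLC constant take $D=B$, $\varepsilon\equiv1$, and the perturbation $h=\sum_{k\in I}c_k\xx_{M,k}$, which has $\norm{h}_{\ell_\infty}\le 1/(4p)<1$ and support $I$ disjoint from $B$: every block sum of $\Ind_B+h$ equals $\sum_{k\in I_j}\delta_k c_k+\sum_{k\in B_j}\delta_k=0$, so $\norm{\Ind_B+h}_{\triangleleft}\le 4M_2$, while $\norm{S_I(\Ind_B+h)}_\YY=\norm{\sum_{k\in I}c_k\yy_k}_\YY\le M_0$ and $\norm{\Ind_B+h}_{\ell_\infty}=1$, giving $\norm{\Ind_B+h}_{\XX_M}\le 5M_2M_0$; on the other hand, testing $\Ind_B$ against the admissible $\Delta$ with $\delta_k=l^{-1/p'}$ on the $l$-th element of each $B_n$ (and $\delta_k$ on $I_n$ chosen to restore the linking relation, which is possible by the lower side of the calibration) yields $\norm{\Ind_B}_{\XX_M}\ge\bigl(\sum_{n\le m_1}(M_2 c_n/2)^p\bigr)^{1/p}>5M_2M_0M^2$. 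Dividing, the QGLC constant of $\XB_M$ is at least $M^2>M$.

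The steps that are essentially transcriptions of the proof of Theorem~\ref{thm:Miguel} -- monotonicity, the two democracy estimates, and the $\ell_p$-convexity bookkeeping -- are routine. The delicate point, on which I would spend the most care, is the joint calibration of $\mathcal{A}$ and of the intervals $(I_n),(B_n)$: the caps $\abs{\delta_k}\le k^{-1/p'}$ on $I$ and $\sum_{k\in D}\abs{\delta_k}\le\sum_{l\le\abs{D}}l^{-1/p'}$ for $D\subset B_n$, together with the upper side of the calibration, must keep $\norm{\Ind_{\varepsilon,D}}_{\triangleleft}$ of order $\abs{D}^{1/p}$ for superdemocracy, while the lower side of the calibration must leave room on $I_j$ to restore $\sum_{k\in I_j}\delta_k c_k+\sum_{k\in B_j}\delta_k=0$ without sacrificing the full weight $l^{-1/p'}$ on $B_j$, so that $\norm{\Ind_B}_{\triangleleft}$ can reach order $\abs{B}^{1/p}$. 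A reassuring contrast with Theorem~\ref{thm:Miguel} is that, precisely because the linking relation no longer forces the tested vector to take a prescribed value on an auxiliary coordinate, no perturbation can rescue QGLC, so the single pair $(\Ind_B,h)$ already certifies its failure.
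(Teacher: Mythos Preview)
Your construction works and follows a genuinely different route from the paper's. Both proofs prune the three-layer index set $A\cup I\cup B$ of Theorem~\ref{thm:Miguel} down to two layers, but in opposite directions: you retain $I\cup B$ and discard the weight coordinates $A$, whereas the paper retains $A\cup I$ and discards the blocks $B$ entirely. In the paper's version the linking relation becomes $\delta_n+\sum_{k\in I_n}\delta_k c_k=0$, the norm is $\max\{\norm{f}_{\ell_\infty},\norm{S_I(f)}_\YY,\norm{f}_\triangleleft\}$, and the QGLC witness is $\Ind_A$ against $\Ind_A+\sum_{k\in I}c_k\,\xx_{M,k}$: the large side $\norm{\Ind_A}\gtrsim m_1^{1/p}$ comes for free from the democracy estimate rather than from a hand-built $\Delta$, and the LPU verification needs only the single ``room in $I_n\setminus D$ versus $D\cap I_n$ large'' dichotomy. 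Your variant keeps the QGLC mechanism of Theorem~\ref{thm:Miguel} essentially intact, at the price of one extra $\Delta$-test (to lower-bound $\norm{\Ind_B}$) and of having to run that dichotomy for both LPU and the democracy lower bound on $\abs{D\cap B}$. On this last point your phrasing ``dump on $B_j\setminus D$, or on $I_j$ when $D$ fills too much of $B_j$'' should be tightened: balancing entirely within $B_j$ is constrained by the subset-sum cap on $B_j$, and balancing on all of $I_j$ can pollute the block sum through $I_j\cap D$, so the case where $D$ fills both $B_j$ and $I_j$ is not covered as stated. The clean two-case split is exactly the $A_{3,1}$/$A_{3,2}$ dichotomy from the proof of Theorem~\ref{thm:Miguel}: balance on $I_j\setminus D$ whenever $\sum_{k\in I_j\setminus D}c_k k^{-1/p'}\ge\sum_{l\le\abs{B_j}}l^{-1/p'}$, and otherwise absorb $\abs{D\cap B_j}\le\abs{B_j}\lesssim\abs{D\cap I_j}$ into the already-controlled $\abs{D\cap I}$. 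With that adjustment your argument goes through.
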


\begin{proof}
Our proof has a similar flavour to that of Theorem~\ref{thm:Miguel}. As in there, it suffices to show that there is a positive constant $C=C(p)$ such that, for every $M\in \NN$, there is a set $ E_M\subset_f \NN$ and a finite-dimensional normed space $\XX_M$ with a monotone normalized Schauder basis $\XB_M=(\xx_{M,n})_{n\in E_M}$ satisfying \eqref{eq:FFlpBis} whose quasi-greedy for largest coefficients constant and partial unconditionality constant satisfy $\qglc> M$ and $\lpu\le C$.

Let $\YB$, $(c_n)_{n=1}^\infty$, $M_0$ and $K$ be as in the proof of Theorem~\ref{thm:Miguel}. Let $C=C(p,K)$ be a constant that will be made explicit later, and choose an integer $m_1>(3M C M_0)^p$. As in the aforementioned proof, set $A=\{1,\dots,m_1\}$. Now choose integer intervals $(I_n)_{n\in A}$ so that:

\begin{enumerate}[label=(\arabic*), leftmargin=*, widest=2]
\item $A<I_n<I_{n+1}$ for $n=1$, \dots, $m_1-1$, and
\item
for all $n\in A$,
\begin{equation}\label{lemmalpunoqglc condII}
1<\sum_{k\in I_n}\frac{c_k}{k^{{1}/{p'}}} <{3}/{2}.
\end{equation}
\end{enumerate}
Set $I=\bigcup_{n\in A}I_n$, $E_M=A\cup I$, and $l_1=\max I_{m_1}$. We equip $E_M$ with the order $\preceq$ defined as follows:
\begin{itemize}[leftmargin=*]
\item if $n_1\in A$ and $n_2 \in I_{n_1}$, then $n_1\prec n_2$,
\item if $n_1, n_2\in A$, $n_1<n_2$ and $k_1\in I_{n_1}$, then $k_1\prec n_2$, and
\item if $n_1, n_2\in A$ or $n_1, n_2\in I$, then $n_1\preceq n_2$ if and only if $n_1 \le n_2$.
\end{itemize}

Let $\mathcal{A}$ be the set of all sequences $\Delta=(\delta_n)_{n\in E_M}$ with the following properties:
\begin{enumerate}[label=(R.\arabic*), leftmargin=*, widest=3]
\item For each $n\in E_M$, $\abs{\delta_n}\le 1$.
\item For each $n\in A$,
\[
\delta_n+\sum_{k\in I_n}\delta_k c_k =0,
\]
\item \label{lpunotqglcI} For each $k\in I$, $\abs{\delta_k}\le k^{-{1}/{p'}}$.
\end{enumerate}

Now consider for each sequence $\Delta=(\delta_n)_{n\in E_M}\in\mathcal{A}$, $n\in A$ and $l\in \{n\}\cup I_n$, the seminorm $\norm{\cdot}_{\Delta,n,l}$ on $\XX_M:=\FF^{E_M}$ given by
\[
f=(a_n)_{n\in E_M}\mapsto \left(\sum_{1\le j< n}\abs{ \delta_j\, a_j+\sum_{k\in I_j}\delta_k\, a_k}^p+\abs{ \sum_{\substack{j\in \{n\}\cup I_n\\ j\le l}}\delta_j\, a_j}^p\right)^{{1}/{p}}.
\]
Define the seminorm
\[
\norm{ f}_{\triangleleft}=\sup_{\Delta\in \mathcal{A}}\sup_{n\in A}\sup_{l\in \{n\}\cup I_n} \norm{f}_{\\Delta,n,l} , \quad f\in \XX_M,
\]
and then the norm
\[
\norm{ f}_{\XX_M}=\max\enbrace{ \norm{f}_{\ell_{\infty}}, \norm{ S_I(f)}_{\YY}, \norm{ f}_{\triangleleft} }, \quad f\in \XX_M.
\]

As in the proof of Theorem~\ref{thm:Miguel}, the canonical basis $\XB_M=(\xx_{M,n})_{n\in E_M}$ of $\XX_M$, ordered by $\preceq$, is a monotone normalized Schauder basis of $(\XX_M,\norm{\cdot}_{\XX_M})$ with normalized dual basis. To estimate the constants of the basis, we pick $D\subset E_M$, $\varepsilon\in \EE^D$, and $(a_n)_{n\in D}$ with $\abs{a_n}\ge 1$ for each $n\in D$. Set $D_1=D\cap A$, $D_2:=D\cap I$ and $f=\sum_{n\in D}a_n\xx_{M,n}$.

For every $\Delta\in \mathcal{A}$, $n\in A$ and $l\in \{n \} \cup B_n$, we have
\[
\norm{\Ind_{\varepsilon, D_1}}_{\Delta,n,l}^p
=\sum_{1\le j <n}\abs{ \delta_j \,\xx_{M,j}^*(\Ind_{\varepsilon, D_1})}^p+\abs{ \delta_n \, \xx_{M,n}^*(\Ind_{\varepsilon, D_1})}^p\le\abs{D_1},
\]
and
\begin{multline*}
\norm{\Ind_{\varepsilon, D_2}}_{\Delta,n,l}^p
=\sum_{1\le j< n}\abs{\sum_{k\in I_j}\delta_k \, \xx_{M,k}^*(\Ind_{\varepsilon, D_2})}^p+\abs{ \sum_{\substack{j\in \{n\}\cup I_n\\ j\le l}}\delta_j \, \xx_{M,j}^*(\Ind_{\varepsilon, D_2})}^p\\
\le \sum_{j=1}^{n}\left(\sum_{k=1}^{\abs{I_j\cap D}}\frac{1}{k^{{1}/{p'}}}\right)^p\le p^p\abs{D_2}.
\end{multline*}
Consequently, $\norm{\Ind_{\varepsilon, D}}_{\triangleleft} \le 2^{1/p} p \abs{D}^{1/p}$. Hence, using \eqref{lemmaqglclpanotqgdem2} we obtain
\begin{equation}\label{eq:Miguel9}
\norm{\Ind_{\varepsilon, D}}_{\XX_M} \le \max\{2^{1/p} p, K\} \abs{D}^{1/p}.
\end{equation}
Reasoning as in the proof of Theorem~\ref{thm:Miguel} we get
\begin{equation}\label{eq:Miguel7}
\frac{1}{K} \abs{D_2}^{{1}/{p}}\le \norm{ \Ind_{\varepsilon, D_2}}_{\YY} =\norm{ S_I\left(\Ind_{\varepsilon, D}\right)}_{\YY}\le K\norm{ S_I(f)}_{\YY} \le K \norm{f}_{\XX_M}.
\end{equation}

Set
\[
D_{1,1}=\enbrace{ n\in D_1 \colon \sum_{k\in I_n\setminus D}\frac{c_k}{k^{{1}/{p'}}}\ge \frac{1}{2}}
\]
and define for each $n\in D_{1,1}$
\[
\delta_{n}=\frac{1}{2 \sgn(a_{n})}.
\]
By definition of $D_{1,1}$, for each $n\in D_{1,1}$ there are scalars $(\delta_j)_{j\in I_n\setminus D}$ with $\abs{\delta_j}\le j^{-\frac{1}{p'}}$ for all $j\in I_n\setminus D$ such that
\[
\delta_n+\sum_{j\in I_n\setminus D}c_j\delta_j=0.
\]
Let $\delta_j=0$ for any other $j\in E_M$. Then $\Delta=(\delta_k)_{k\in E_M}\in \mathcal{A}$, and
\begin{equation}\label{paradem5}
\norm{f}_{\XX_M}^p\ge \norm{f}^p_{\Delta, m_1, l_1} = \sum_{j\in D_{1,1}}\abs{\delta_j \, a_j }^p
=\sum_{j\in D_{1,1}}\left(\frac{\abs{a_j}}{2}\right)^p\ge \frac{\abs{D_{1,1}}}{2^p}.
\end{equation}
On the other hand, by \eqref{lemmalpunoqglc condII}, for every $n\in D_{1,2}:=D_1\setminus D_{1,1} $ we have
\begin{align*}
\sum_{k\in D\cap I_n}\frac{c_k}{k^{{1}/{p'}}}\ge \frac{1}{2},
\end{align*}
so that, as in the proof of Theorem~\ref{thm:Miguel}, we deduce that $\abs{D\cap I_n} \ge 2$. Therefore $2\abs{D_{1,2}} \le \abs{D_2}$. Combining this inequality with \eqref{eq:Miguel7} and \eqref{paradem5} we get
\begin{equation}\label{eq:Miguel8}
\abs{D}^{1/p} \le \left(\frac{3}{2} \abs{D_2}+\abs{D_{1,1}}\right)^{1/p}\le \left( \frac{3 K^{2p}}{2} +2^p\right)^{1/p} \norm{f}_{\XX_M}.
\end{equation}
In the particular case that $f=\Ind_{\varepsilon,D}$, \eqref{eq:Miguel8} gives
\begin{equation}\label{eq:Miguel10}
\abs{D}^{1/p} \le \left( \frac{3 K^{2p}}{2} +2^p\right)^{1/p} \norm{\Ind_{\varepsilon,D}}_{\XX_M}.
\end{equation}
Combining \eqref{eq:Miguel9}, \eqref{eq:Miguel10} and \eqref{eq:Miguel8} gives that $\XB_M$ is LPU with constant $C$ and that \eqref{eq:FFlpBis} holds,
where

\[
C= \max\{ 2^{1/p} p,K\} \left( \frac{3 K^{2p}}{2} +2^p\right)^{1/p}.
\]
It remains to prove the lower estimate for the QGLC constant. To do so, define
\[
g=\sum_{k\in I}c_k\xx_{M,k}.
\]
Then by \eqref{eq:FFlpBis} and our choice of $A$,
\[
3 M_0M C< \abs{A}^{{1}/{p}}\le C \norm{\Ind_A}_{\XX_M}\le C \qglc \norm{\Ind_A + g}_{\XX_M}
\]
Note that $\norm{\Ind_A+g}_{\ell_\infty}=1$,
\[
\norm{S_I(\Ind_A+g)}_\YY=\norm{g}_\YY< M_0,
\]
and, for every $\Delta=(\delta_n)_{n\in E_M}\in \mathcal{A}$, $n\in A$ and $l\in\{n\}\cup I_n$,
\[
\norm{\Ind_A+g}_{\Delta,n,l}=\abs{\sum_{\substack{j\in \{n\}\cup I_n\\j\le l}}\delta_j \, c_j }\\
\le 1+\sum_{k\in I_n}\frac{c_k}{k^{{1}/{p'}}}\le 3.
\]
It follows that $\norm{\Ind_A + g}_{\XX_M}\le \max\{3,M_0\}\le 3 M_0$. Hence, $\qglc>M$, and the lemma is proved.
\end{proof}

Finally, we prove that QGLC bases need not be LUCC.

\begin{theorem}\label{theoremQGLCnotLUCC}
There is a monotone Schauder basis $\XB$ of a Banach space $\XX$ that it is QGLC and democratic (hence super-democratic) but is not LUCC. Moreover, given $1\le p<\infty$ we can choose $\XB$ so that its fundamental function verifies $\udf(m)\approx m^{1/p}$ for $m\in\NN$.
\end{theorem}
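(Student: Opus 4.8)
The plan is to establish the theorem by a construction dual to the one proving Theorem~\ref{thm:LPUnotQGLC}. As in that proof, it suffices to produce a constant $C=C(p)$ such that for every $M\in\NN$ there are a finite set $E_M\subset_f\NN$ and a finite‑dimensional normed space $\XX_M$ carrying a monotone normalized Schauder basis $\XB_M=(\xx_{M,n})_{n\in E_M}$ with normalized dual basis for which \eqref{eq:FFlpBis} holds with a constant independent of $M$, $\XB_M$ is QGLC with $\qglc[\XB_M,\XX_M]\le C$, but $\lucc[\XB_M,\XX_M]> M$. Granting this, put $\XX=(\bigoplus_{M=1}^\infty\XX_M)_p$ and let $\XB$ be the direct sum of the $\XB_M$ ordered lexicographically. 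By the stability properties of direct sums recorded in Section~\ref{sect:EltonNotDilworth}, $\XB$ is a monotone normalized Schauder basis that is QGLC and satisfies \eqref{eq:FFlpBis} with a uniform constant; in particular $\XB$ is super‑democratic with $\udf(m)\approx m^{1/p}$, hence democratic, while $\lucc[\XB,\XX]=\sup_M\lucc[\XB_M,\XX_M]=\infty$, so $\XB$ is not LUCC.

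To manufacture $\XX_M$ I would run the machinery of Theorem~\ref{thm:LPUnotQGLC} with the same ingredients $\YB$, $(c_n)_{n=1}^\infty$, $M_0$, $K$: split $E_M$ into a ``head'' $A$ and a ``tail'' $I=\bigcup_{n\in A}I_n$ of integer intervals calibrated by two‑sided estimates on $\sum_{k\in I_n}c_kk^{-1/p'}$, equip $E_M$ with an order making each head index precede its own tail block, and let the norm on $\XX_M=\FF^{E_M}$ be the maximum of $\|\cdot\|_\infty$, of a term $\|S_{\bullet}(\cdot)\|_\YY$ needed to push the fundamental function up, and of a term $\|\cdot\|_{\triangleleft}=\sup_{\Delta\in\mathcal A}\sup_n\sup_l\|\cdot\|_{\Delta,n,l}$ assembled from $\ell_p$‑sums of the linear ``blocks'' $\sum_{\nu\in\C_j}\delta_\nu a_\nu$ over a family $\mathcal A$ of admissible sequences $\Delta=(\delta_\nu)_{\nu\in E_M}$ subject to $\abs{\delta_\nu}\le1$, to an $\ell_{p'}$‑cap $\abs{\delta_k}\le k^{-1/p'}$ on the tail, and to one annihilation identity per block. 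The decisive change with respect to Theorem~\ref{thm:LPUnotQGLC} is how the annihilation identities are set up: there they were tuned so that the cancelling vector $\Ind_A+\sum_{k\in I}c_k\xx_{M,k}$ had coefficients $c_k\le1$ on the set $I$ disjoint from the greedy set $A$, whence the failure surfaced as a QGLC failure; here I would instead tune them so that a vector $w=\sum_{\nu\in D_*}t_\nu\xx_{M,\nu}$ with \emph{all} $t_\nu\ge1$ is, on each of its blocks, a scalar multiple of the corresponding annihilation normal --- hence killed by every $\Delta\in\mathcal A$ on complete blocks --- while $\Ind_{D_*}$ is not, so that $\norm{w}_{\XX_M}$ is bounded solely by $\|w\|_\infty$, by the $\|S_\bullet\|_\YY$ contribution of $w$, and by the incomplete blocks of $w$ (all kept small by the calibration), whereas $\norm{\Ind_{D_*}}_{\XX_M}\gtrsim\abs{D_*}^{1/p}$. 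Choosing $\abs{D_*}$ large compared with the relevant power of the oscillation of $w$ then forces $\lucc[\XB_M,\XX_M]$ to exceed $M$ after a harmless reparametrization.

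The verification of \eqref{eq:FFlpBis} and of QGLC would follow the pattern of the proofs of Theorems~\ref{thm:Miguel} and~\ref{thm:LPUnotQGLC}: the upper democracy bound $\norm{\Ind_{\varepsilon,D}}_{\XX_M}\le C\abs{D}^{1/p}$ by feeding $\Ind_{\varepsilon,D}$ into each term of the norm, the lower democracy bound and the QGLC inequality $\norm{\Ind_{\varepsilon,D}}_{\XX_M}\le C\norm{\Ind_{\varepsilon,D}+f}_{\XX_M}$ (for $\|f\|_\infty\le1$, $\supp f\cap D=\emptyset$) by testing against carefully chosen $\Delta\in\mathcal A$ and using the $\|S_\bullet\|_\YY$ term together with the facts that $\YB$ is QGLC and LPU, and the failure of LUCC via the pair $(\Ind_{D_*},w)$ above. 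The crux --- and the place where the construction genuinely differs from its model --- is the simultaneous control of two conflicting requirements: the witness $w$ must have coordinates $\ge1$ and support of tightly controlled size while each of its blocks is parallel to an annihilation normal, yet the norm term $\|S_\bullet(\cdot)\|_\YY$ indispensable for democracy must be localized so as not to meet $\supp w$ in a set of size comparable to $\abs{D_*}$ --- otherwise, $\YB$ being LPU, that term would be of order $\abs{D_*}^{1/p}$ on $w$ and wipe out the inequality $\norm{w}\ll\norm{\Ind_{D_*}}$. Arranging the annihilation normals, the placement of the democracy‑critical $\YY$‑term, and the lengths and positions of the intervals $I_n$ (played against the decay of $(c_n)$) so that all these estimates hold at once is the heart of the proof.
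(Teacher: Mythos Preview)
Your outline is strategically correct and is the paper's approach: build finite blocks with uniform constants and take their $\ell_p$-direct sum. But you stop precisely at the point that carries the proof, and the template you propose to modify (the $I$-intervals of Theorem~\ref{thm:LPUnotQGLC}) is not the one the paper uses. The paper's construction is closer to the $B$-part of Theorem~\ref{thm:Miguel}: take $E_M=A\cup B$ with $B=\bigcup_{n\in A}B_n$ (no $I$-intervals at all), calibrate $|B_n|$ by $\sum_{k=1}^{|B_n|}k^{-1/p'}\approx M_2c_n$, set the norm to $\max\{\|\cdot\|_\infty,\|S_A(\cdot)\|_\YY,\|\cdot\|_\triangleleft\}$, impose the annihilation $M_2c_n\delta_n+\sum_{k\in B_n}\delta_k=0$ together with the rearrangement cap on $(\delta_k)_{k\in B_n}$, and compare $f_0=\Ind_B+\sum_{n\in A}M_2c_n\,\xx_{M,n}$ with $f_1=\Ind_{E_M}$.

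The tension you flag is resolved not by keeping the $\YY$-term away from $\supp w$, but by placing it on the \emph{head} $A$ and making the head coefficients of the witness equal to $M_2c_n$: choosing $M_2>2/c_{m_1}$ forces every coefficient of $f_0$ to exceed $2$, yet $\|S_A(f_0)\|_\YY=M_2\bigl\|\sum_{n\in A}c_n\yy_n\bigr\|_\YY\le M_2M_0$ by property~\ref{cn} of $\YB$. So the $\YY$-term does meet $\supp f_0$ on a set whose size grows with $M$, but the coefficients there are exactly the ones $\YY$ was chosen to absorb---this is the missing idea in your proposal. Since $f_0$ is killed on complete blocks, $\|f_0\|_\triangleleft\le 2M_2$, hence $\|f_0\|\le 2M_2M_0$; a specific $\Delta\in\mathcal A$ gives $\|f_1\|_\triangleleft\gtrsim M_2\bigl(\sum_{n\in A}c_n^p\bigr)^{1/p}>2M_2M_0M$ once $m_1$ is taken large, so $\lucc>M$. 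The QGLC and democracy estimates for $D\cap A$ come from $\|S_A(\cdot)\|_\YY$ and the QGLC property of $\YB$, and for $D\cap B$ from a hand-built $\Delta$ exactly as in the $D_3$-estimate of Theorem~\ref{thm:Miguel}.
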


\begin{proof}
As before, it suffices to find a positive constant $C$ such that for each $M\in \NN$ there is a set $ E_M\subset_f \NN$ and a finite-dimensional normed space $\XX_M$ with monotone normalized Schauder basis $\XB_M=(\xx_{M,n})_{n\in E_M}$ satisfying \eqref{eq:FFlpBis} whose quasi-greedy for largest coefficients constant and its lower unconditionality for constant coefficients constant satisfy $\qglc\le C$ and $\lucc> M$. Let $\YB$, $(c_m)_{m=1}^\infty$ $K$ and $M_0$ as in the proof of Theorem~\ref{thm:Miguel}. Choose $m_1\in \NN$ so that
\[
\sum_{n=1}^{m_1}c_n^p>(8M_0M)^p,
\]
and let $A$ be as in the aforementioned proof. Pick $M_2>2c_{m_1}^{-1}$ and choose integer intervals $(B_n)_{n\in A}$ so that
\begin{enumerate}[label=(\arabic*), leftmargin=*, widest=2]
\item $A<B_n<B_{n+1}$ for all $1\le n\le m_1-1$, and
\item for all $n\in A$,
\begin{equation}\label{condIII}
\sum_{k=1}^{\abs{B_n}}\frac{1}{k^{{1}/{p'}}}\le M_2 c_n \le \sum_{k=1}^{\abs{B_n}+1}\frac{1}{k^{{1}/{p'}}}.
\end{equation}
\end{enumerate}
Set $B=\bigcup_{n\in A}B_n$ and $E_M=A\cup B$. Set $l_1=\max B_{m_1}$. Let $\mathcal{A}$ be the set of sequences $\Delta=(\delta_n)_{n\in E_M}$ with the following properties:
\begin{enumerate}[label=(R.\arabic*), leftmargin=*, widest=3]
\item For each $n\in E_M$, $|\delta_n|\le 1$.
\item For each $n\in A$, \label{qglcnolpuA}
\[
M_2c_n \delta_n +\sum_{k\in B_n}\delta_k=0.
\]
\item \label{qglcnolpuB}For each $n\in A$ and each $D\subset B_n$,
\[
\sum_{j\in D}\abs{\delta_j}\le \sum_{k=1}^{\abs{D}}\frac{1}{k^{{1}/{p'}}}.
\]
\end{enumerate}
We endow $E_M$ with the order $\preceq$ defined as follows:
\begin{itemize}[ leftmargin=*]
\item If $n_1,n_2\in A$ or $n_1, n_2\in B$, then $n_1\preceq n_2$ if and only if $n_1\le n_2$.
\item If $n_1\in A$ and $n_2\in B_{n_1}$, then $n_1\prec n_2$.
\item If $n_1, n_2\in A$, $n_1<n_2$ and $k\in B_{n_1}$, then $k\prec n_2$.
\end{itemize}
Consider for each $\Delta=(\delta_n)_{n\in E_M}\in\mathcal{A}$, $n\in A$ and $l\in \{n\}\cup B_n$, the seminorm $\norm{\cdot}_{\Delta,n,l}$ on $\XX_M:=\FF^{E_M}$ given by
\[
\norm{(a_j)_{j\in E_M}}_{\Delta,n,l}=\left(\sum_{1\le j< n}\abs{ \delta_j \, a_j+\sum_{k\in B_j}\delta_k\, a_k}^p+\abs{ \sum_{\substack{j\in \{n\}\cup B_n\\ n\le j\le l}}\delta_j \, a_j}^p\right)^{{1}/{p}}.
\]

Now, define for each sequence $f\in \XX_M$
\[
\norm{f}=\max\enbrace{ \norm{f}_{\ell_{\infty}}, \norm{ S_A(f)}_{\YY}, \norm{ f}_{\triangleleft} },
\]
where
\[
\norm{ f}_{\triangleleft}=\sup_{\Delta\in \mathcal{A}}\sup_{n\in A}\sup_{l\in\{n\}\cup B_n} \norm{f}_{\Delta,n,l}.
\]
The canonical basis $\XB_M=(\xx_{M,n})_{n\in E_M}$ ordered by $\preceq$ is a monotone normalized Schauder basis of $(\XX_M,\norm{\cdot}_{\XX_M})$ with normalized dual basis. To find an upper bound for the QGLC constant of $\XB_M$ we pick $D\subset E_M$, $\varepsilon=(\varepsilon_n)_{n\in D}\in \EE^D$, and $f\in \Cu$ with $\supp(f)\cap D=\emptyset$. Let $D_1=A\cap D$ and $D_2=B\cap D$. For every $\Delta\in \mathcal{A}$, $n\in A$ and $l\in \{n\}\cup B_n$, we have
\[
\norm{\Ind_{\varepsilon,D_1}}_{\Delta,n,l}^p
\le \sum_{1\le j< n}\abs{ \delta_j} ^p+\abs{ \delta_n}^p\le\abs{D_1},
\]
and
\[
\norm{\Ind_{\varepsilon,D_2}}_{\Delta,n,l}^p
\le\sum_{j\in A} \left(\sum_{k\in B_j\cap D}\abs{\delta_j}\right)^p\le \sum_{j\in A} \left(\sum_{k=1}^{\abs{B_j\cap D}}\frac{1}{k^{{1}/{p'}}}\right)^p\le p^p\abs{D_2},\]
whence we infer that
\begin{equation}\label{eq:Miguel92}
\norm{\Ind_{\varepsilon, D}}_{\XX_M} \le \max\{2^{1/p} p, K\} \abs{D}^{1/p}.
\end{equation}

Since $\YB$ is QGLC with constant $K$, using \eqref{lemmaqglclpanotqgdem2} we obtain
\begin{multline}\label{segundo1}
\frac{1}{K}\abs{D_1}^{{1}/{p}}
\le\norm{ \Ind_{\varepsilon, D_1}}_{\YY}
= \norm{ S_A\left(\Ind_{\varepsilon, D}\right)}_{\YY}\\
\le K \norm{ S_A\left(\Ind_{\varepsilon, D}+f\right)}_{\YY}
\le K \norm{\Ind_{\varepsilon, D}+f}_{\XX_M}.
\end{multline}
Let $A_2=\{n\in \NN \colon D\cap B_n\not=\emptyset\}$.
For every $n\in A_2$, let $(k_j)_{j=1}^{\abs{D\cap B_n}}$ be the increasing enumeration of $D\cap B_n$. Define $(\delta_k)_{k\in D\cap B_n}$ by $\delta_{k_j}=j^{-{1}/{p'}} / \varepsilon_{k_j}$ for $j=1$, \dots $\abs{D\cap B_n}$, and
\[
\delta_n=-\frac{1}{M_2c_n}\sum_{k\in D\cap B_n}\delta_k.
\]
For any other $k\in E_M$ set $\delta_k=0$. It follows from \eqref{condIII} that $\abs{\delta_n}\le 1$ for all $n\in A_2$. Moreover, since $M_2c_n>2$, for such $n$ we have
\[
\abs{\delta_n}\le \frac{1}{2}\sum_{k\in D\cap B_n}\abs{\delta_k}\le \frac{1}{2}\sum_{k=1}^{\abs{D\cap B_n}}\frac{1}{k^{{1}/{p'}}}.
\]
By construction $\Delta=(\delta_j)_{j\in E_M}\in \mathcal{A}$. Therefore,
\begin{align*}
\norm{ \Ind_{\varepsilon, D}+f}_{\XX_M}^p &\ge \norm{\Ind_{\varepsilon, D}+f }_{\Delta,m_1,l_1}^p\\
&=\sum_{n\in A_2}\abs{\delta_n\, \varepsilon_n+\sum_{k=1}^{\abs{D\cap B_n}}\frac{1}{k^{\frac{1}{p'}}} }^p \\
&\ge \sum_{n\in A_2}\abs{-\abs{\delta_n} +\sum_{k=1}^{\abs{D\cap B_n}}\frac{1}{k^{{1}/{p'}}} }^p\\
&\ge \frac{1}{2^p}\sum_{n\in A_2}\left( \sum_{k=1}^{\abs{D\cap B_n}}\frac{1}{k^{{1}/{p'}}} \right)^p\\
&\ge \frac{\abs{D_2}}{(4p)^p},
\end{align*}
which jointly with \eqref{segundo1} yields
\begin{equation}\label{Miguel100}
\abs{D}^{1/p} \le \enpar{K^{2p} + (4p)^p}^{1/p} \norm{ \Ind_{\varepsilon, D}+f}_{\XX_M}.
\end{equation}
Combining \eqref{eq:Miguel92} and \eqref{Miguel100} gives a constant $C=C(p,K)$ such that $\XB_M$ is QGLC with constant $C$ and satisfies \eqref{eq:FFlpBis}.

In order to obtain a lower bound for its LUCC constant, set
\[
f_0=\Ind_{B}+\sum_{n\in A}M_2c_n\xx_{M,n}, \quad f_1:=\Ind_{E_M},
\]
so that $\norm{ f_1}_{\XX_M}\le\lucc \norm{ f_0}_{\XX_M}$. By the definition of $\mathcal{A}$ and using \eqref{condIII},
\begin{align*}
\left\Vert f_0\right\Vert_{\triangleleft}&=\sup_{\Delta\in\mathcal{A}}\sup_{n\in A}\sup_{l\in \{n\}\cup B_n}\abs{\sum_{\substack{j\in \{n\}\cup B_n\\ n\le j\le l}}\delta_j\xx_j^*\left(f_0\right)}\\
&\le \sup_{n\in A}\left(M_2c_n+\sum_{k=1}^{\abs{B_n}}\frac{1}{k^{{1}/{p'}}}\right)\\
&\le 2M_2c_1<2M_2.
\end{align*}
Also, $\norm{ S_A(f_0)}_{\YY}\le M_2M_0$. Therefore, $\norm{ f_0}_{\XX_M}\le 2M_2M_0$.

For $n\in A$ fixed let $(k_j)_{j=1}^{\abs{B_n}}$ be the increasing enumeration of $B_n$. For each $1\le j\le \abs{B_n}$, set $\delta_{k_{j}}=j^{-{1}/{p'}}$. Put
\[
\delta_n:=-(M_2c_n)^{-1}\sum_{j=1}^{\abs{B_n}}\frac{1}{j^{{1}/{p'}}}.
\]
Then $\Delta=(\delta_k)_{k\in \XX_M}\in \mathcal{A}$. Since $M_2c_n>2$ for $n\in A$, taking into consideration our choice of $A$ and inequality \eqref{condIII} we deduce that
\begin{align*}
\norm{ f_1}_{\XX_M}&\ge \norm{f_1}_{\Delta,m_1,l_1}\\
& = \left(\sum_{n\in A}\abs{ \delta_n +\sum_{k\in B_n}\delta_k }^p\right)^{{1}/{p}}\\
&\ge \left(\sum_{n\in A} \left(-\frac{1}{2}\sum_{j=1}^{\abs{B_n}}\frac{1}{j^{{1}/{p'}}}+\sum_{j=1}^{\abs{B_n}}\frac{1}{j^{{1}/{p'}}}\right)^p\right)^{{1}/{p}}\\
&\ge \left(\sum_{n\in A}\left\vert \frac{M_2c_n}{4}\right\vert^p\right)^{\frac{1}{p}}>2M_2M_0M.
\end{align*}
It follows that $\lucc>M$ and the proof is over.
\end{proof}

To close this section, we mention another kind of partial unconditionality that is implicit in \cite{DOSZ2009}*{Definition 4.1}. We say that a basis is \emph{nearly unconditional for lower coefficients} (NULC for short) if for each $t\in (0,1]$ there is a constant $C\in (0,\infty)$ such that $\norm{S_A(f)}\le C\norm{f}$ for all $s\in (0,\infty)$, all $f\in \XX$ with $f=S_{A(f,ts,s)}(f)$ and all $A\subset A(f,ts,s)$. Given $0<t\le 1$, we define $\varrho(t)$ as the smallest value of $C$. Clearly, if $\XB$ is nearly unconditional then it is also NULC with $\varrho(t)\le \phi(t)$. Also, if $\XB$ is LPU and $f$, $t$, $s$, and $A$ are as above, then
\[
\norm{S_A(f)}\le \lpu\norm{t^{-1}f}=t^{-1}\lpu\norm{f},
\]
so $\XB$ is NULC. Now Theorems~\ref{thm:LPUnotQGLC} and~\ref{theoremQGLCnotLUCC} allow us to distinguish NULC from either property. More precisely, we can state the following.

\begin{corollary}\label{cor: NULC}
Near unconditionality for lower coefficients is a strictly weaker property than either near unconditionality or lattice partial unconditionality.
\end{corollary}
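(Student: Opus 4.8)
The plan is to obtain the corollary as a bookkeeping consequence of the two constructions carried out in Theorems~\ref{thm:LPUnotQGLC} and~\ref{theoremQGLCnotLUCC}, combined with the elementary implications recorded immediately before the statement and with the identification of near unconditionality with quasi-greediness for largest coefficients (Theorem~\ref{thm:AAB2023}). Recall that those implications say that every nearly unconditional basis is NULC with $\varrho(t)\le\phi(t)$, and that every LPU basis is NULC; so ``weaker'' is already half-proved, and what remains is to exhibit, for each of near unconditionality and lattice partial unconditionality, a basis that is NULC but fails the property in question.

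For the separation from near unconditionality I would invoke Theorem~\ref{thm:LPUnotQGLC}: the basis $\XB$ produced there is LUCC and super-democratic, hence LPU, and therefore NULC by the computation preceding the corollary; on the other hand it is not QGLC, so by Theorem~\ref{thm:AAB2023} it is not nearly unconditional. This single basis thus witnesses that NULC does not imply near unconditionality.

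For the separation from LPU I would instead invoke Theorem~\ref{theoremQGLCnotLUCC}: the basis $\XB$ produced there is QGLC, hence nearly unconditional by Theorem~\ref{thm:AAB2023}, hence NULC; but it is not LUCC, and since the estimate $\max\{\ucc,\lucc\}\le\lpu$ shows that an LPU basis is automatically LUCC, this $\XB$ is not LPU. Combining the two cases, NULC is implied by both near unconditionality and lattice partial unconditionality yet implies neither, which is exactly the assertion. There is no real obstacle here beyond the two (already established) constructions; the only point requiring care is to check that the two trivial implications go in the correct direction, i.e.\ that near unconditionality and LPU each \emph{entail} NULC rather than conversely, which is precisely the content of the discussion just above the statement.
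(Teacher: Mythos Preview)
Your proposal is correct and follows exactly the approach the paper takes: the implications nearly unconditional $\Rightarrow$ NULC and LPU $\Rightarrow$ NULC are established in the discussion preceding the corollary, and then Theorem~\ref{thm:LPUnotQGLC} (LPU, hence NULC, but not QGLC, hence not nearly unconditional via Theorem~\ref{thm:AAB2023}) and Theorem~\ref{theoremQGLCnotLUCC} (QGLC, hence nearly unconditional and thus NULC, but not LUCC and hence not LPU) supply the two required separations. The paper in fact does not write out a formal proof of the corollary, merely pointing to these two theorems in the sentence just before its statement; your write-up makes the logic explicit and is entirely consistent with it.
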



\begin{bibdiv}
\begin{biblist}

\bib{AlbiacAnsorena2017b}{article}{
      author={Albiac, Fernando},
      author={Ansorena, Jos\'{e}~L.},
       title={Characterization of 1-almost greedy bases},
        date={2017},
        ISSN={1139-1138},
     journal={Rev. Mat. Complut.},
      volume={30},
      number={1},
       pages={13\ndash 24},
         url={https://doi-org/10.1007/s13163-016-0204-3},
      review={\MR{3596024}},
}

\bib{AAB2023}{article}{
      author={Albiac, Fernando},
      author={Ansorena, Jos\'{e}~L.},
      author={Berasategui, Miguel},
       title={Elton's near unconditionality of bases as a threshold-free form
  of greediness},
        date={2022},
     journal={arXiv e-prints},
      eprint={2209.03445},
         url={https://arxiv.org/abs/2209.03445},
}

\bib{AAB2023b}{article}{
      author={Albiac, Fernando},
      author={Ansorena, Jos\'{e}~L.},
      author={Berasategui, Miguel},
       title={Sparse approximation using new greedy-like bases in
  superreflexive spaces},
        date={2023},
     journal={Studia Math.},
      eprint={https//doi.org/10.4064/sm220506-3-2},
         url={https//doi.org/10.4064/sm220506-3-2},
}

\bib{AABBL2022}{article}{
      author={Albiac, Fernando},
      author={Ansorena, Jos\'{e}~L.},
      author={Berasategui, Miguel},
      author={Bern\'{a}, Pablo~M.},
      author={Lassalle, Silvia},
       title={Weak forms of unconditionality of bases in greedy approximation},
        date={2022},
        ISSN={0039-3223},
     journal={Studia Math.},
      volume={267},
      number={1},
       pages={1\ndash 17},
         url={https://doi-org/10.4064/sm210601-2-2},
      review={\MR{4460220}},
}

\bib{AABBL2023}{article}{
      author={Albiac, Fernando},
      author={Ansorena, Jos\'{e}~L.},
      author={Berasategui, Miguel},
      author={Bern\'{a}, Pablo~M.},
      author={Lassalle, Silvia},
       title={Bidemocratic {B}ases and {T}heir {C}onnections with {O}ther
  {G}reedy-{T}ype {B}ases},
        date={2023},
        ISSN={0176-4276},
     journal={Constr. Approx.},
      volume={57},
      number={1},
       pages={125\ndash 160},
         url={https://doi-org.umbral.unirioja.es/10.1007/s00365-022-09607-z},
      review={\MR{4543167}},
}

\bib{AABW2021}{article}{
      author={Albiac, Fernando},
      author={Ansorena, Jos\'{e}~L.},
      author={Bern\'{a}, Pablo~M.},
      author={Wojtaszczyk, Przemys{\l}aw},
       title={Greedy approximation for biorthogonal systems in quasi-{B}anach
  spaces},
        date={2021},
     journal={Dissertationes Math. (Rozprawy Mat.)},
      volume={560},
       pages={1\ndash 88},
}

\bib{AAW2021b}{article}{
      author={Albiac, Fernando},
      author={Ansorena, Jos\'{e}~L.},
      author={Wojtaszczyk, Przemys{\l}aw},
       title={On certain subspaces of {$\ell_p$} for {$0<p\leq1$} and their
  applications to conditional quasi-greedy bases in {$p$}-{B}anach spaces},
        date={2021},
        ISSN={0025-5831},
     journal={Math. Ann.},
      volume={379},
      number={1-2},
       pages={465\ndash 502},
         url={https://doi-org/10.1007/s00208-020-02069-3},
      review={\MR{4211094}},
}

\bib{AAW2021}{article}{
      author={Albiac, Fernando},
      author={Ansorena, Jos\'{e}~L.},
      author={Wojtaszczyk, Przemys{\l}aw},
       title={Quasi-greedy bases in {$\ell_ p$} {$(0<p<1)$} are democratic},
        date={2021},
        ISSN={0022-1236},
     journal={J. Funct. Anal.},
      volume={280},
      number={7},
       pages={108871, 21},
         url={https://doi-org/10.1016/j.jfa.2020.108871},
      review={\MR{4211033}},
}

\bib{DOSZ2009}{article}{
      author={Dilworth, S.~J.},
      author={Odell, E.},
      author={Schlumprecht, Th.},
      author={Zs\'{a}k, A.},
       title={Partial unconditionality},
        date={2009},
        ISSN={0362-1588},
     journal={Houston J. Math.},
      volume={35},
      number={4},
       pages={1251\ndash 1311},
      review={\MR{2577152}},
}

\bib{DKK2003}{article}{
      author={Dilworth, Stephen~J.},
      author={Kalton, Nigel~J.},
      author={Kutzarova, Denka},
       title={On the existence of almost greedy bases in {B}anach spaces},
        date={2003},
        ISSN={0039-3223},
     journal={Studia Math.},
      volume={159},
      number={1},
       pages={67\ndash 101},
         url={https://doi.org/10.4064/sm159-1-4},
        note={Dedicated to Professor Aleksander Pe{\l}czy\'nski on the occasion
  of his 70th birthday},
      review={\MR{2030904}},
}

\bib{DKKT2003}{article}{
      author={Dilworth, Stephen~J.},
      author={Kalton, Nigel~J.},
      author={Kutzarova, Denka},
      author={Temlyakov, Vladimir~N.},
       title={The thresholding greedy algorithm, greedy bases, and duality},
        date={2003},
        ISSN={0176-4276},
     journal={Constr. Approx.},
      volume={19},
      number={4},
       pages={575\ndash 597},
         url={https://doi-org/10.1007/s00365-002-0525-y},
      review={\MR{1998906}},
}

\bib{DKO2015}{article}{
      author={Dilworth, Stephen~J.},
      author={Kutzarova, Denka},
      author={Oikhberg, Timur},
       title={The thresholding greedy algorithm, greedy bases, and duality},
        date={2015},
        ISSN={11391138, 19882807},
     journal={Rev. Mat. Complut.},
      volume={28},
       pages={393\ndash 409},
         url={https://doi.org/10.1007/s13163-014-0163-5},
}

\bib{Elton1978}{book}{
      author={Elton, John~Hancock},
       title={Weakly null normalized sequences in {B}anach spaces},
   publisher={ProQuest LLC, Ann Arbor, MI},
        date={1978},
  url={http://gateway.proquest.com/openurl?url_ver=Z39.88-2004&rft_val_fmt=info:ofi/fmt:kev:mtx:dissertation&res_dat=xri:pqdiss&rft_dat=xri:pqdiss:7915816},
        note={Thesis (Ph.D.)--Yale University},
      review={\MR{2628434}},
}

\bib{KoTe1999}{article}{
      author={Konyagin, Sergei~V.},
      author={Temlyakov, Vladimir~N.},
       title={A remark on greedy approximation in {B}anach spaces},
        date={1999},
        ISSN={1310-6236},
     journal={East J. Approx.},
      volume={5},
      number={3},
       pages={365\ndash 379},
      review={\MR{1716087}},
}

\bib{MaureyRosenthal1977}{article}{
      author={Maurey, B.},
      author={Rosenthal, H.~P.},
       title={Normalized weakly null sequence with no unconditional
  subsequence},
        date={1977},
        ISSN={0039-3223},
     journal={Studia Math.},
      volume={61},
      number={1},
       pages={77\ndash 98},
         url={https://doi-org.umbral.unirioja.es/10.4064/sm-61-1-77-98},
      review={\MR{438091}},
}

\bib{Odell1980}{incollection}{
      author={Odell, E.},
       title={Applications of {R}amsey theorems to {B}anach space theory},
        date={1980},
   booktitle={Notes in {B}anach spaces},
   publisher={Univ. Texas Press, Austin, Tex.},
       pages={379\ndash 404},
      review={\MR{606226}},
}

\bib{Rosenthal1974}{article}{
      author={Rosenthal, Haskell~P.},
       title={A characterization of {B}anach spaces containing {$l^{1}$}},
        date={1974},
        ISSN={0027-8424},
     journal={Proc. Nat. Acad. Sci. U.S.A.},
      volume={71},
       pages={2411\ndash 2413},
      review={\MR{358307}},
}

\bib{Woj2000}{article}{
      author={Wojtaszczyk, Przemys{\l}aw},
       title={Greedy algorithm for general biorthogonal systems},
        date={2000},
        ISSN={0021-9045},
     journal={J. Approx. Theory},
      volume={107},
      number={2},
       pages={293\ndash 314},
         url={https://doi-org/10.1006/jath.2000.3512},
      review={\MR{1806955}},
}

\end{biblist}
\end{bibdiv}

\end{document}